\newcommand{\RR}{{\mathbb R}}
\newcommand{\NN}{{\mathbb N}}
\newcommand{\ZZ}{{\mathbb Z}}
\newcommand{\CC}{{\mathbb C}}
\newcommand{\LL}{{\mathbb L}}
\newcommand\CIb{\mathcal C_b^\infty}
\newcommand{\cB}{{\mathcal B}}
\newcommand{\cC}{{\mathcal C}}
\newcommand{\cD}{{\mathcal D}}
\newcommand{\cE}{{\mathcal E}}
\newcommand{\cF}{{\mathcal F}}
\newcommand{\cG}{{\mathcal G}}
\newcommand{\cL}{{\mathcal L}}
\newcommand{\cO}{{\mathcal O}}
\newcommand{\cP}{{\mathcal P}}
\newcommand{\cS}{{\mathcal S}}
\newcommand{\cV}{{\mathcal V}}
\newcommand{\D}{{\partial }}
\newcommand{\bR}{\mathbb{R}}
\newcommand{\bP}{\mathbb{P}}
\newcommand{\bE}{\mathbb{E}}
\newcommand{\bL}{\mathbb{L}}
\newcommand{\bZ}{\mathbb{Z}}
\newcommand\ie{{\em i.e., }}
\newcommand\adj{\operatorname{ad}}
\newtheorem{theorem}{Theorem}[section]
\newtheorem{proposition}[theorem]{Proposition}
\newtheorem{corollary}[theorem]{Corollary}
\newtheorem{lemma}[theorem]{Lemma}
\newtheorem{definition}[theorem]{Definition}
\theoremstyle{remark}
\newtheorem{remark}[theorem]{Remark}
\numberwithin{equation}{section}
\newcommand{\beq}{\begin{equation}}
\newcommand{\eeq}{\end{equation}}
\newcommand{\field}[1]{\ensuremath{\mathbb{#1}}}
\newcommand{\R}{\ensuremath{\field{R}}}
\renewcommand{\sp}{{\ensuremath{{\scriptscriptstyle +}}}}
\newcommand{\<}{\langle}
\renewcommand{\>}{\rangle}
\newcommand{\s}{\sigma}
\definecolor{DarkRed}{rgb}{0.8,0.3,0.6}
\definecolor{Green}{rgb}{0.2,0.7,0.3}
\newcommand{\pa}{\partial}
\newcommand{\fA}{\mathfrak{A}}
\newcommand{\NAME}{Dyson-Taylor commutator method}
\author{Radu Constantinescu}
\email{radu.constantinescu@jpmorgan.com}
\address{Interest Rate Quantitative Research Group, JPMorganChase, New York, NY}
\author{Nick Costanzino}
\email{costanzi@math.psu.edu}
\author{Anna L. Mazzucato}
\email{mazzucat@math.psu.edu}
\author{Victor Nistor}
\email{nistor@math.psu.edu}
\address{Pennsylvania State University,
Math. Dept., University Park, PA 16802}
\thanks{A.M. was partially supported by NSF Grant DMS
0708902. V.N. was partially supported by NSF grant
DMS-0555831, DMS-0713743, and OCI 0749202}
\date{\today}
\begin{document}

\title[Parabolic Equations]{Approximate Solutions to Second Order
Parabolic Equations I: analytic estimates}

\begin{abstract}
We establish a new type of local asymptotic formula for the Green's
function $\cG_t(x,y)$ of a 
uniformly
parabolic linear operator $\pa_t - L$ with
non-constant coefficients using dilations and Taylor expansions at a
point $z=z(x,y)$, for a function $z$ with bounded derivatives such
that $z(x,x)=x \in \RR^N$. For $z(x,y) =x$, we recover the known, classical expansion obtained
via pseudo-differential calculus. Our method is based on
dilation at $z$, Dyson and Taylor series expansions, and the 
Baker-Campbell-Hausdorff commutator formula. Our procedure leads to
an elementary, algorithmic construction of approximate solutions
to parabolic equations which are accurate to arbitrary prescribed
order in the short-time limit. We establish 
mapping properties and
precise error estimates
in the exponentially weighted, $L^{p}$-type Sobolev spaces $W^{s,p}_a(\RR^N)$ 
that appear in practice.
\end{abstract}

\maketitle

\tableofcontents

\section{Introduction} \label{sec.intro}

We establish a new type of local estimate for the Green's function of
a uniformly parabolic linear operator with non-constant coefficients that {\em do not
depend on time}. More precisely, we consider second-order
differential operators $L$ of the form
\begin{equation}\label{eq.L}
	Lu(x) := \sum_{i,j=1}^N a_{ij}(x) \D_i \D_j u(x)+ \sum_{k=1}^N
	b_k(x)\D_k u(x)+ c(x)u(x),
\end{equation}
where $x = (x_1, ..., x_N) \in \bR^N$, $\D_k := \frac{\D}{\D x_k}$,
and the coefficients $a_{ij}$, $b_i$, and $c$ and all their
derivatives are assumed to be smooth and uniformly
bounded. (We then write $a_{ij}, b_j, c \in \cC_b^\infty\left(\bR^N
\right)$ and we denote the class of these operators by $\bL$.)
We also assume that $L$ is {\em uniformly strongly elliptic},
namely that there exists a constant $\gamma>0$ such that
\begin{equation}\label{eq.unif.s.ell}
  \sum_{ij} a_{ij}(x) \xi_i \xi_j \ge \gamma \|\xi\|^2 , \quad
  \|\xi\|^2 := \sum_{i=1}^ {N} \xi_i^2,
\end{equation}
for all $(\xi, x) \in\RR^N \times \RR^N$.  We define the matrix $A(x)
:= [a_{ij}(x)]$, which, without loss of generality, we can assume to
be symmetric. In view of the applications we are interested in, we take the
coefficients of $L$ to be real-valued. The set of operators $L \in
\bL$ satisfying \eqref{eq.unif.s.ell} will be denoted by
$\bL_\gamma$.

We study the {\em short time asymptotic} of the initial value problem (IVP) for the 
parabolic operator $\pa_t -L$:
\begin{equation}\label{eq.IVP}
\begin{cases}
  \D_t u(t, x) - Lu(t, x) = g(t, x) & \hspace{1.0cm} \mbox{in}\;
  (0,\infty)\times \bR^N\\ u(0, x) =f(x), &
\hspace{1.0cm} \mbox{on}\; \{0\}\times \bR^N\,,
\end{cases}
\end{equation}
for $u$, $f$, and $g$ in suitable function spaces.  
In view of Duhamel's principle, we may assume $g=0$.

When $b(x), c(x)\ne 0$ in \eqref{eq.L}, the corresponding parabolic equation
$\, \pa_t u- Lu =0$ is collectively referred to as a Fokker-Planck equation.
Fokker-Planck equations arise in many applications, for example in statistical
mechanics \cite{Carmichael,Gardiner}, and more generally in  probability.

We can also
replace $\R^N$ with a manifold of bounded geometry \cite{CCCMN,MN}, 
which thus allows us to treat also some {\em degenerate elliptic} operators $L$.
In particular, the approach in this paper can be extended to the case of
operators of the form $\partial_t - (a x^2\partial_x^2 + bx\partial_x + c)$ acting
on $\RR_t \times \RR_{x+}$ and to other operators that appear in practice. 
This extension is work in progress \cite{CCCMN}. See also below for a more 
detailed discussion of this point.

It is known that there exists $\cG^L \in \cC^\infty((0,\infty)
\times \bR^N \times \bR^N)$ such that
\begin{align}
	u(t,x) = \int_{\bR^N} \cG^L(t,x,y) f(y)dy, \quad t>0,
\end{align}
is a solution of the above equation, and it is unique if $f$ and $u$ satisfy 
certain growth conditions, specified later (see for
instance \cite{DiBenedetto}, page 237).  We will often write
$\cG^L(t,x,y)=\cG^L_t(x,y)$.  In case we have uniqueness, we shall
also use the notation $u(t) = e^{tL}f$.  The operator $e^{tL}$ is then
called the {\em solution operator} of the problem \eqref{eq.IVP}, and
its kernel $\cG^L_t$ the {\em Green's function}, or {\em fundamental
solution} of $L$, or {\em conditional probability density} in
applications to probability.

For $L$ with constant coefficients and for a few other cases, one can
explicitly compute the kernel $\cG^L$.  In general however, it is not
known how to provide explicit formulas for $\cG^L$, though there is a
large literature on developing methods to obtaining good asymptotic
formulas for the Green's function for $t$ small and $x$ close to
$y$. For example, interpreting the operator $L$ as a Laplace-Beltrami
operator on a manifold plus lower order terms, lead to formal
asymptotic expansions of the form
\begin{equation*}
  \cG_t(x,y) = \frac{e^{- \frac{d(x,y)^2}{4 t}}} {(4 \pi t)^{N/2}}
  \left( \cG^{(0)}(x,y) + \cG^{(1)}(x,y) t + \cG^{(2)}(x,y)t^n +
  \ldots \right),
\end{equation*}
as $t\to 0_+$, where $d(x,y)$ is the geodesic distance between $x$ and
$y$ and $\cG^{(j)}(x, y)$ are smooth functions in $x$ and $y$. Among
the vast literature we refer to \cite{Az, Hsu, Kampen, McKeanSinger,
Pleijel, V1, V1, Vas}, (see also \cite{Farkas, Greiner, Melrose2, TayPDEII} for a 
pseudo-differential operator perspective). However, one difficulty in the
practical implementation of this geometric approach is that, except
again in special cases, there is no closed form solution to the
geodesic equations used in defining $d(x,y)$, which thus needs to be
accurately approximated or computed numerically.

A related short-time asymptotic approach uses oscillatory type
integrals, which gives:
\begin{equation}
  \label{eq.statphase}
  \cG^L(t,x,y)\sim \sum_{j\geq 0} t^{(j-n)/2}
  p_j\left(x,t^{-1/2}(x-y)\right) e^{-\frac{(x-y)^T A(x)^{-1}
  \cdot(x-y)}{4t}},
\end{equation}
as  $t\to 0_+$, where $p_j(x,w)$ is a polynomial of
degree $j$ in $w$, and $A(x):=[a_{ij}(x)]$.  (We follow here Taylor
\cite[Chapter 7, Section 13]{TayPDEII}, where an asymptotic parametrix
for the heat equation on compact manifolds was constructed.)  Finally,
we mention the recent approaches in \cite{Ait} using multivariate
Hermite expansions, and in \cite{CFP} using an alternate construction of a parametrix approximation.

In our paper, we devise a new, elementary method to obtain asymptotic
expansions similar and even more general than
\eqref{eq.statphase}. Our method is based on dilating the coefficients
of $L$ around a point $z$ with ratio $t^{1/2}$, then expanding in a
Taylor series in $t$. We regard this expansion as a perturbation of
the operator $L_0$ obtained from $L$ by freezing coefficients at
$z$. We then use a Dyson-series perturbative expansion to approximate
the heat-kernel of $L$. The Dyson-series expansion turns out to be
explicitly computable using the Baker-Campbell-Hausdorff commutator
formula. We call the resulting method the {\em \NAME}.  We think that
our method is more accurate and more stable in practical implementations
\cite{CCLMN,CCCMN}.

The main goal is to provide an {\em explicit, algorithmic} method to
compute each term in the expansion, while at the same time obtain
sharp error bounds in both weighted and unweighted Sobolev spaces. We
do not work on compact manifolds, rather in $\RR^N$, so that the error
needs to be globally controlled (see below for a connection with
operators on non-compact manifolds of bounded geometry). In
particular, our approximation is valid uniformly in $x$ and $y$,
provided $t$ is small enough.

We think that our method, the \NAME, is more elementary than the ones 
found in the literature and since it relies on an
iterative time-ordered perturbative formula for the solution operator
$e^{tL}$, Equation \eqref{eq.perturbative}, a parabolic rescaling
argument, and a suitable Taylor's expansion of the coefficients of $L$
(equation \ref{eq.taylorexp}). Since the iterative formula is obtained
via repeated applications of Duhamel's principle, we could also treat
certain classes of semilinear equations following Kato's method, which
allows to take rougher data as well (see \cite{KT} in the context of
the Navier-Stokes equations).  We remark here that a similar parabolic
scaling combined with Taylor expansions has been used in obtaining a
short-time expansion for stochastic flows (see \cite{BenA,Cast}).

Our main result is the following theorem. We introduce the weight
$\<x\> = (1 + |x|^2)^{1/2}$.  Below, $W^{m,p}_a := W^{m,p}_a(\RR^N)$
is the exponentially weighted Sobolev space defined by
\begin{multline*}
    W_a^{m, p}(\RR^N) := \{u:\RR^N \to \CC,\ \partial^\alpha_x
    \big(e^{a\<x\>} u(\cdot)\big) \in L^p(\RR^N),\ |\alpha| \le m \},
\end{multline*}
for $1<p<\infty$, $m \in \ZZ_+$, and $a \in \RR$. (See also Equation
\eqref{def.w.S}).  When $a=0$, we recover the usual Sobolev spaces.
The need to consider exponentially weighted spaces arises in
applications to probability, in particular in stochastic volatility
models. For instance, after making the substitution $x = e^y$, the
payoff usually associated with the Black-Scholes equation (equation
\eqref{eq.BS} below) belongs to $W^{m,p}_a$ with $m=1$, $a<-1$, and
$p$ large.  We also denote
\begin{equation}\label{eq.def.G}
	G(z; x) = (4\pi)^{-N/2} \det(A(z))^{-1/2} e^{- x^T A(z)^{-1}
	x/4},
\end{equation}
where $z$ is a given point in $\RR^N$. It is interesting to mention
that the Black-Scholes equation fits into the framework of manifolds
with cylindrical ends, to which the results of Krainer \cite{Krainer}
apply. Manifolds with cylindrical ends are the simplest examples of
manifolds with bounded geometry.

To approximate the value of the Green function $\cG_t(x,y)$ at some
point $(x,y)$, we will use a Taylor-type expansion at the point $z$ of
a suitable parabolic rescaling of the coefficients of $L$, which,
however will be chosen depending of $x$ and $y$, $z=z(x,y)$.
Typically $z(x,y) =\lambda x + (1-\lambda)y$, for some fixed
$\lambda$, but we can allow more general choices. Namely, we shall say
that $z(x,y)$ is {\em admissible} if $z(x,x)=x$ and all derivatives
$\partial^\alpha z$ are bounded for $\alpha \neq 0$.

\begin{theorem} \label{theorem.main1}
Let $\mu\in \ZZ_+$, $L\in \LL_\gamma$, $z=z(x,y)$ be an admissible
function.  Let $\; \mathfrak{P}^\ell(z,x,y) = \sum a_{\alpha,
\beta}(z)(x-z)^\alpha(x-y)^{\beta}$, $|\alpha| \le \ell$, $\beta \le
3\ell$, $a_{\alpha, \beta} \in \CIb(\RR^N)$, be the functions provided
by the \NAME\ explained in the second half of this Introduction.
Define for each integer $0\leq \ell\leq \mu$, 
\begin{equation*} \cG^{[\mu,z]}_t(x,y) :=  t^{-N/2}
   \sum_{\ell=0}^{\mu} t^{\ell/2} \,
   \mathfrak{P}^\ell(\,z,\,z+\,\frac{x-z}{t^{1/2}}\,,
   \,z+\,\frac{y-z}{t^{1/2}}\,) G(z; \,\frac{x-y}{t^{1/2}}\,),
\end{equation*}
where $z=z(x,y)$. Define the error term $\cE_t^{[\mu,z]}$ in the
approximation of the Green's function by:
\begin{equation*}
             e^{tL} f(x) = \int_{\RR^N} \cG^{[\mu, z]}_t(x,y) f(y) dy
             + t^{(\mu+1)/2} \cE_t^{[\mu,z]} f(x).
\end{equation*}
Then, for any $f \in W_a^{m,p}(\RR^N)$, $a\in \RR$, $m\geq 0$, $1<p<\infty$,
we have
\begin{equation}
\label{eq.errorest}
   \|\cE_t^{[\mu,z]} f \|_{W_a^{m+k,p}} \le Ct^{-k/2}
   \|f\|_{W_a^{m,p}},
\end{equation} for any $t\in [0,T]$, $0<T<\infty$, 
$k\in \ZZ_+$, with $C$ independent of $t \in [0,T]$.
\end{theorem}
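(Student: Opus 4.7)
The plan is to obtain $\cG^{[\mu,z]}_t$ as the truncation of an iterated Duhamel/Dyson series for $e^{tL}$ around the frozen-coefficient operator $L_0 = L_0(z)$, whose heat kernel is precisely the Gaussian $G(z;\cdot)$, and to estimate the remainder using parabolic regularity in the weighted Sobolev spaces $W^{m,p}_a$ combined with the parabolic rescaling $x \mapsto z + t^{1/2}(x-z)$, which gives one factor of $t^{1/2}$ per Taylor order.

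Concretely, I would start from the iterated Duhamel identity
\[
e^{tL} \;=\; \sum_{j=0}^{\mu} \int_{\Delta_j(t)} e^{(t-s_1)L_0}(L{-}L_0)\, e^{(s_1-s_2)L_0} \cdots (L{-}L_0)\, e^{s_j L_0} \, ds \;+\; R_{\mu+1}(t),
\]
where $\Delta_j(t)$ is the time-ordered $j$-simplex in $[0,t]^j$ and $R_{\mu+1}(t)$ contains $\mu{+}1$ factors of $(L{-}L_0)$ and one factor of $e^{sL}$. I then Taylor-expand the coefficients of $L$ at $z$ to order $\mu$, turning each $(L-L_0)$ into a polynomial in $(x-z)$ with derivatives of order at most two, plus a higher-order remainder; after parabolic rescaling, each coefficient monomial $(x-z)^\alpha$ of degree $\ell$ contributes a factor $t^{\ell/2}$. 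Collecting all contributions of total order $\le \mu$ should reproduce $t^{-N/2}\sum_{\ell=0}^{\mu} t^{\ell/2} \mathfrak{P}^\ell(z,\,z+\tfrac{x-z}{t^{1/2}},\,z+\tfrac{y-z}{t^{1/2}})\, G(z;\tfrac{x-y}{t^{1/2}})$; the Baker--Campbell--Hausdorff formula is what makes the identification of the polynomials $\mathfrak{P}^\ell$ algorithmic and closed-form. Everything of total order $\ge \mu+1$ is absorbed into $t^{(\mu+1)/2}\cE_t^{[\mu,z]}$.

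For the bound \eqref{eq.errorest}, the key ingredients I would invoke are the parabolic semigroup estimates $\|e^{sL_0(z)} g\|_{W^{m+k,p}_a} \lesssim s^{-k/2}\|g\|_{W^{m,p}_a}$ and $\|e^{sL} g\|_{W^{m+k,p}_a} \lesssim s^{-k/2}\|g\|_{W^{m,p}_a}$, uniform in the freezing point $z$, together with the fact that multiplication by $e^{a\langle x\rangle}$ commutes with $L_0(z)$ and with $L$ modulo bounded first-order operators, which reduces the weighted estimates to unweighted ones. Accounting carefully for the two derivatives lost per factor of $(L{-}L_0)$ against the time integrations over $\Delta_{\mu+1}(t)$, and extracting the gain $t^{(\mu+1)/2}$ from the Taylor remainder and the rescaling, should give the desired $t^{-k/2}$ bound. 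The main obstacle is that $z=z(x,y)$ depends on both endpoints, so $\int \cG^{[\mu,z(x,y)]}_t(x,y) f(y)\,dy$ is not a convolution and does not factor through the semigroup; I would handle this by exploiting that $\D^\alpha z$ is bounded for $|\alpha|\ge 1$ (so derivatives falling on $z$ are harmless via the chain rule), and by using the Gaussian decay of $G$ together with the Lipschitz inequality $|\langle x\rangle - \langle y\rangle| \le |x-y|$ to absorb the weight ratio $e^{a(\langle x\rangle - \langle y\rangle)}$ into $G(z;(x-y)/t^{1/2})$ at the price of a constant depending on $a$ and $\gamma$.
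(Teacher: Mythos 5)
Your overall architecture (Dyson/Duhamel iteration around the frozen operator $L_0^z$, Taylor expansion of the coefficients at $z$, parabolic rescaling giving one factor of $t^{1/2}$ per Taylor order, Baker--Campbell--Hausdorff to make the $\mathfrak{P}^\ell$ explicit) is exactly the paper's construction, so the derivation of the approximation kernel is fine. The gap is in the error estimate, and it is twofold. First, you truncate the Dyson/Taylor expansion at the approximation order $\mu$ itself, so your remainder $R_{\mu+1}(t)$ contains the $s$-dependent Taylor remainder and a factor $e^{sL}$, and you must extract the sharp bound $\|\cE^{[\mu,z]}_t\|_{W^{m,p}_a\to W^{m+k,p}_a}\lesssim t^{-k/2}$ from it with only the prefactor $t^{(\mu+1)/2}$ available. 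The paper deliberately does \emph{not} do this: it expands to an order $n$ strictly larger than $\mu$ (eventually $n+1\ge \mu+r$, cf.\ the splitting \eqref{eq.two.two}), keeps the intermediate, $s$-independent terms $\Lambda_{\alpha,z}$ with $\mu<|\alpha|\le n$ separate, and only the genuine remainder $\bE^{s,z}_{n,n}$ is estimated crudely (Proposition \ref{prop.rough}, Lemma \ref{lemma.rough}), at the cost of an extra loss $t^{-(r+q)/2}$ which is then absorbed by the large power $s^{n+1-\mu}$. With your truncation at $\mu$ there is no such compensating power, and the only bounds one can realistically prove for the $s$-dependent, $z(x,y)$-twisted remainder kernel (kernel bounds plus a Schur-type argument, which is essentially what your ``Gaussian decay absorbs the weight'' step amounts to) lose additional negative powers of $t$, so the claimed $t^{-k/2}$ estimate does not follow as described.

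Second, even for the terms you keep, the uniform-in-$s$ bound $\|\cL_{s,\alpha}\|_{W^{r,p}\to W^{r+k,p}}\le C\,s^{-k}$ with \emph{no} further loss is the delicate point, and ``chain rule on $z$ plus Gaussian absorption'' is not enough to get it: because $z=z(x,y)$ depends on both endpoints, the dilated operator is not a conjugate of a semigroup composition, and naive propagation of the semigroup smoothing estimates through the time-ordered integrals either loses a full power of $t$ per second-order factor or requires exactly the kind of structural statement the paper proves, namely that $\cL_{s,\alpha}=b_s(x,D)$ with $b_s(x,\xi)=a_s(x,s\xi)$ and $\{a_s\}$ bounded in $S^{-\infty}$ (Theorem \ref{thm.mainbound}), admissibility of $z$ entering precisely to control $s^{-1}\bigl(x_j-z_j(x,x-sy)\bigr)$ as a symbol. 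That pseudodifferential step (Theorem \ref{theorem.refined}), combined with the large-$n$ splitting above (Theorem \ref{theorem.rough}), is what delivers the sharp $t^{-k/2}$ of \eqref{eq.errorest}; your proposal is missing both devices, so as written the final estimate would not close.
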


The function $\cG^{[\mu, z]}_t(x,y)$ will be called the {\em
$\mu$th-order approximation kernel} for the solution operator $e^{t
L}.$

See Subsection \ref{sec.GA} at the end of this introduction for a more
detailed description of how the approximation kernel $\cG^{[\mu,
z]}_t(x, y)$ is obtained.  In Section \ref{sec.commutator} we give an
explicit iterative construction of the functions $\mathfrak{P}^\ell$.
The main interest is, of course, in the derivation of the
approximation kernel. However, without good error estimates, this
kernel will not be of great use in practice.

For the particular choice $z(x,y) = x$, we have checked that the first
few polynomials $p_j(x,x-y):= \mathfrak{P}^j(x,x,y)$ coincide with the
ones in the expansion \eqref{eq.statphase} above (see \cite[Chapter 7,
Section 13]{TayPDEII}).  Our result is more general, however. We
discuss in \cite{CCCMN} different choices of the additional function
$z = z(x, y)$ in the framework of the usual Black-Scholes equation. It
turns out that the choice $z = x$ is not always the most
appropriate. In fact, for the Black-Scholes equation and $\mu=0$, the choice $z = \sqrt{xy}$ can lead to a better approximation,
whereas, surprisingly, the choice $z = (x + y)/2$ yields worse
numerical results than simply choosing $z=x$. This addional accuracy obtained for a suitable choice of
$z(x,y) \not = x$ and in view also of the mid-point quadrature
rule (which leads to a higher order of convergence) justifies
the extra generality of including arbitrary admissible $z$ in
our method.

A localization procedure as in \cite{MN} will allow us to pass from
operators on $\RR^N$ to operators on manifolds $M$ of bounded geometry
(again following \cite{CGT} and \cite{MN}). More precisely, our
results will extend to operators of the form $L = \sum_{ij}^N
a_{ij}\partial_i \partial_j + \sum_{ij}^N b_{i}\partial_i + c$,
defined on a subset $\Omega$ of $\RR^N$ such that the coefficients are
bounded in {\em normal coordinates} with respect to the metric $g =
\sum_{ij}^N a^{ij}dx_i dx_j$, assumed to be complete of bounded
geometry on $\Omega$. Here the matrix $[a^{ij}]$ is the inverse of the
matrix $A$, following the usual convention. Such metrics arise
naturally when resolving boundary singularities.  (see \cite{Melrose2}
for a systematic treatment of heat calculus on manifolds with
boundary).  We refer to \cite{ALN, CGT, Koch1, Trecent} for recent
papers dealing with partial differential equations on manifolds with
metrics of this form.  In particular, we can deal with certain
operators having polynomial coefficients such as those arising in
probability and its applications, for example in the Black-Scholes
option pricing equation \cite{BS}
\begin{equation} \label{eq.BS}
    Lu(x) = \sigma x^2\partial_x^2u(x)/2 + r(x\partial_xu(x) - u(x)),
\end{equation}
where in this context $t$ is the time to option expiry. Our results
also apply to differential operators arising in stochastic volatility
models (c.f. \cite{AL, BPV, FPS, Gatheral, Heston, Lesniewski,Lew}). On the
other hand, our results apply to operators of the form
$x^{2\beta}\partial_x^2$, $0< \beta < 1$ {\em only locally}. A good
framework for obtaining differential operators with unbounded
coefficients that satisfy our assumptions is that of Lie manifolds
\cite{ALN}. This point will be discussed in detail in \cite{CCCMN}.
Explicit calculations and concrete, practical applications of our
method will be given in \cite{CCLMN,CCCMN2}.

In addition, our methods generalize to operators with {\em
time-depdendent} coefficients, satisfying certain conditions. This
extension is addressed in a forthcoming paper \cite{CMN}.

We conclude this first part of the introduction with an outline of the
paper. In Section \ref{sec.prelim}, we define the weighted and regular
Sobolev spaces of initial data for the parabolic equation and
introduce the class of operators $L$ under study. We also briefly
discuss mapping properties of the semigroup generated by $L$ and use
them to justify the Dyson (or time-ordered) perturbation expansion of
$e^{tL}$. In Section \ref{sec.dilations}, we exploit local in space
and time dilations of the Green's function together with a certain
Taylor expansion of the operator $L$ to rewrite the perturbation
expansion as a formal power series in $s=\sqrt{t}$. In Section
\ref{sec.commutator}, we employ commutator estimates to derive
computable formulas for each term in the expansion.   This leads
to the \NAME\ to determine the functions $\mathfrak{P}^{\ell}$ used in
Theorem \ref{theorem.main1}.  

Finally, in Section~\ref{sec.EE}, we rigorously justify our expansion
and derive error bounds in time by means of pseudodifferential
calculus.\\

\noindent{\bf Acknowledgments} We thank Andrew Lesniewski and Michael
Taylor for sending us their papers and for useful discussions. We also thank Richard Melrose for carefully reading our paper and for
pointing out a possible missunderstanding in an earlier version of
this paper.

\subsection{The approximate Green function\label{sec.GA}}

We close this Introduction by describing in more detail the \NAME used to define approximation kernel $\cG^{[n, z]}_t$.  Given an
operator $T$ with smooth kernel, we denote its kernel by $T(x, y)$, as
customary.

We consider a {\em uniformly strongly elliptic} differential operator
$L$ of the form $L := \sum_{i,j=1}^N a_{ij} \D_i \D_j + \sum_{k=1}^N
b_k\D_k + c$, where $a_{ij}, b_k, c \in C^\infty_b(\RR^N)$ are {\em
real valued}.  Given a fixed point $z \in \RR^N$ and $s>0$, we define
$L^{s,z} := \sum_{i,j=1}^N a^{s,z}_{ij}(x) \D_i \D_j + s \sum_{i=1}^N
b^{s,z}_i(x) \D_i + s^2 c^{s,z}(x)$, where for a generic function $f$
we set $f^{s, z}(x) = f(z + s(x-z))$.  Hence, $z$ acts as a fixed
dilation center.  We then Taylor expand this operator in $s$ around
$0$ to order $n$:
\begin{equation} \label{eq.taylorexp)}
  L^{s,z} = \sum_{m=0}^n s^m L^{z}_m + V_{n+1}^{s,z} =
  \sum_{m=0}^{n+1} s^m L^{z}_m,
\end{equation}
where $L^{z}_j$, $0 \le j \le n$, are differential operators with {\em
polynomial coefficients} that do not depend on $s$, whereas
$L^{z}_{n+1}$ has smooth coefficients that, however, do
depend on $s$ (although this dependence is not shown in the notation). Hence,
$V_{n+1}^{s,z} =s^{n+1} L^{z}_{n+1}$ is the remainder of the Taylor
expansion.
The order $n$ will be chosen later. In particular, we observe that
\begin{equation} \label{eq.L0def}
     L^z_0 =  \sum_{i,j} a_{ij}(z) \pa_i\pa_j.
\end{equation}

For any fixed, positive integers $k \le n+1$ and $\ell$, we shall
denote by $\fA_{k,\ell}$ the set of multi-indexes $\alpha = (\alpha_1,
\alpha_2, \ldots, \alpha_k) \in \NN^k$, such that $|\alpha| := \sum
\alpha_j = \ell$. (Hence, $k\leq \ell$.) Then, for each multi-index $\alpha = (\alpha_1,
\ldots, \alpha_k) \in \fA_{k,\ell}$, $k \le n$, we introduce
\begin{equation} \label{eq.Lambda_alpha1}
    \Lambda_{\alpha,z} := \int_{\Sigma_k} e^{\tau_0 L_0^{z}}
    L^{z}_{\alpha_1} e^{\tau_1 L_0^{z}} L^{z}_{\alpha_2} \cdots
    L^{z}_{\alpha_k} e^{\tau_k L_0^{z}} d\tau.
\end{equation}
Observe that $k,\ell$ are unique given $\alpha$.

The main point is that the operators $\Lambda_\alpha$ can be computed
explicitly as follows.  Let us denote by $\cD(a,b)$ the vector space
of all differentiations of polynomial degree at most $a$ and order at
most $b$.  (By {\em polynomial degree} of a differentiation we mean the highest power of
the polynomials appearing as coefficients.)  Then, for any $L_0
\in \cD(0,2)$ that is uniformly strongly elliptic and for any $L_m \in
\cD(m,2)$, we have a differential operator $P_m(L_0, L_m;
\theta,x,\D)$ given by the formula $e^{\theta L_0} L_m = P_m(L_0, L_m;
\theta,x,\D) e^{\theta L_0},$ where $\theta > 0$ (see Lemma
\ref{lemma.finite-BCH}). Let $\Sigma_k$ be the unit $k$-dimensional
simplex.  Next, for any given multi-index $\alpha \in \fA_{k,\ell}$
with $k \le n$, we define ${\cP}_{\alpha}(x, z, \D) := \int_{\Sigma_k}
\prod_{i=1}^k P_{\alpha_i} (L_0^{z}, L_{\alpha_i}^{z}; 1-\sigma_i, x,
\D) d\sigma$. Then
\begin{equation} \label{eq.lambdaalphadef}
  \Lambda_{\alpha, z} = \cP_{\alpha}(x,z,\D_{x})e^{L_0^{z}},
\end{equation}
where the product is the composition of operators and $\cP_{\alpha}$
is a differential operator of order $2k+ \ell$ in
$x$ and polynomial degree $\leq \ell$ in $(x-z)$
(see Lemma \ref{lemma.explicit}).

Since $z$ is arbitrary, but fixed at this stage, if $L_0^z$ is the operator
in \eqref{eq.L0def} then $e^{L_0^z}(x,y)$ can be explicitly calculated and it agrees
with the function $G(z,x-y)$ introduced in equation \eqref{eq.def.G}. Therefore, it can
be easily seen from \eqref{eq.lambdaalphadef}, that
\[
     \Lambda_{\alpha, z}(x,y) = \mathfrak{P}^\ell(z, x,y) G(z; x-y),
\]
for some $\; \mathfrak{P}^\ell(z,x,y) = \sum a_{\alpha, \beta}(z)(x-z)^\alpha(x-y)^{\beta}$,
$|\alpha| \le \ell$, $\beta \le 3\ell$, $a_{\alpha, \beta} \in \CIb(\RR^N)$. In particular,
all $\Lambda_{\alpha,z}$ are operators with smooth kernels, thus denoted
$\Lambda_{\alpha,z}(x, y)$.

We will show that $e^{tL}$ as well is an operator
with smooth kernels, henceforth denoted $\cG^{L}_t(x, y)$.

Let us fix for the time being a smooth function $z : \RR^N \times
\RR^N \to \RR^N$ whose properties will be made precise below. (Two
typical examples are $z(x, y) = (x+y)/2$ and $z(x, y) = x$, which
suffice in many applications.)

Our approximation will be obtained by combining Lemma
\ref{lemma.Green.funct.cor} with the perturbative estimate
of Equation \eqref{eq.L.pert} at some point $z=z(x,y)$
using the dilation with center $z(x, y)$ and denoting $s^2 = t$.
Then, for any $\mu\geq n$, we define
\begin{multline} \label{eq.main.def}
	\cG_t^{[\mu,z]}(x,y) := s^{-N} e^{L_0^{z}}(z + s^{-1}(x - z), z
  	 + s^{-1}(y-z)) \\ + \sum_{\ell = 1}^\mu  \sum_ {k=1}^{ \ell }
  	 \sum_{\alpha \in \fA_{k, \ell}} s^\ell \Lambda_{\alpha, z}(z
  	 + s^{-1}(x - z), z + s^{-1}(y-z)) .
\end{multline}
The operator $L$ is not shown explicitly in the notation $\cG_t^{[\mu, z]}(x,y)$,
although $\cG_t^{[\mu,z]}(x,y)$ does depend on $L$. This is not likely
to cause any confusion, since $L$ is usually fixed in our discussions.

The justification of the above definition for the approximation is
that a Dyson series expansion of order $n+1$ gives us
\begin{multline} \label{eq.full.sum}
	\cG_t(x,y) := s^{-N} e^{L_0^{z}}(z + s^{-1}(x - z), z +
  	 s^{-1}(y-z)) \\ + \sum_{\ell = 1}^{(n+1)^2} \ \sum_ {k=1}^{
  	 \max\{\ell,n+1\} } \ \sum_{\alpha \in \fA_{k, \ell}} s^\ell
  	 \Lambda_{\alpha, z}(z + s^{-1}(x - z), z + s^{-1}(y-z)),
\end{multline}
where for $k=n+1$ and $\alpha = (\alpha_1, \ldots, \alpha_k) \in
\fA_{k,\ell}$, we introduce
\begin{equation} \label{eq.Lambda_alpha2}
    \Lambda_{\alpha,z} := \int_{\Sigma_k} e^{\tau_0 L_0^{z}}
    L^{z}_{\alpha_1} e^{\tau_1 L_0^{z}} L^{z}_{\alpha_2} \cdots
    L^{z}_{\alpha_k} e^{\tau_k L^{s,z}}  d\tau.
\end{equation}
The difference between equations \eqref{eq.main.def} and
\eqref{eq.full.sum} is that the sum in the first equation contains
exactly the terms with $s^{\ell}$, $\ell \le n$, from the second
equation.  The difference between equations \eqref{eq.Lambda_alpha1}
and \eqref{eq.Lambda_alpha2} is in the last exponential. Note that
$\Lambda_{\alpha, z}$ does not depend on $s$ if $\ell = |\alpha| \le
n$, but it may depend on $s$ otherwise.  In any case, all the terms
$\Lambda_{\alpha, z}$ that depend on $s$ will be included in the error
term.
All terms $\Lambda_{\alpha, z}$ with $\mu<|\alpha|\leq n$, which  do not depend on $s$, will
also be included in the error. We remark that the error term is never computed explicitly,
as only the $\mu$th order approximation kernel is needed. Therefore, while $\mu$ will usually be small in
applications, we can take $n$ as large as needed to justify the error bounds of Theorem \ref{theorem.main1}.
In Section \ref{sec.EE}, we will show that $n> \mu+N-1$ suffices.

\section{Preliminaries} \label{sec.prelim}

We begin by discussing in more details the class of second-order
operators $L$ of the form \eqref{eq.L} that are the focus of our
work. Below we set \beq \label{eq.def.bdderiv} C^\infty_b(\RR^N) := \{
f: \RR^N \to \CC, \partial^\alpha f \text{ bounded for all }
\alpha\,\}.  \eeq

\begin{definition}\label{def.Lgamma}
We shall denote by $\bL$ the set of differential operators $L$ of the
form
\begin{equation}\label{eq.L2}
	L := \sum_{i,j=1}^N a_{ij} \D_i \D_j + \sum_{k=1}^N b_k\D_k +
	c,
\end{equation}
where $a_{ij}, b_k, c \in C^\infty_b(\RR^N)$ are {\em real valued}.
We shall denote by $\bL_\gamma$ the subset of operators $L \in \bL$
satisfying the uniform strong ellipticity estimate
\eqref{eq.unif.s.ell} with the ellipticity constant $\gamma$.  We let
$A=[a_{ij}]$ and assume additionally that $A$ is symmetric,
which can be achieved simply by replacing $A$ with its
symmetric part, since this does not change our differential operator.
\end{definition}

The above definition can be extended to operators on manifolds of
bounded geometry $M$ (see \cite{CGT, MN, Shubin}). For example, when
$M = \RR^N$ with the Euclidean metric, the class $\bL$ considered in
\cite{MN} coincides with the class $\bL$ considered in this paper.

In what follows, we denote the inner product on $L^2(\RR^N)$ by $(u, v) =
\int_{\RR^N} u(x) \overline{v(x)} dx$. Let us denote $\langle \xi \rangle :=
(1+|\xi|^2)^{1/2}$ and let $\Hat{u}$ be the Fourier Transform of
$u$. We also recall the definition of and some basic facts about
$L^p$-based Sobolev spaces $W^{r,p}(\RR^N)$ . For $1 < p < \infty$,
$r\in \RR$:
\begin{multline}\label{eq.def.wsR}
    	W^{r,p}(\RR^N):=\{u:\RR^N\to \CC\, , \ \langle \xi \rangle^r
	\Hat{u} \in L^p(\RR^N)\} \\ = W^{r, p}(\RR^N) := \{ u:\RR^N
	\to \CC\, ,\ (1-\Delta)^{r/2} u \in L^p(\RR^N) \},
\end{multline}
If $r\in \ZZ_+$,
\[
 W^{r,p}(\RR^N)= \{u:\RR^N \to \CC,\ \partial^\alpha u \in
L^p(\RR^N),\ |\alpha| \le r \}.
\]
Since the dimension $N$ is fixed throughout the paper, we will usually
write $W^{r,p}$ for $W^{r,p}(\RR^N)$.  When $1<p<\infty$, the dual of
$W^{r,p}$ is the Sobolev space $W^{-r,p'}$ with $1/p+1/p'=1$.

We are interested in considering the initial value problem
\eqref{eq.IVP} in the largest-possible space of initial data $f$ that
includes the typical initial conditions that arise in applications and
where uniqueness holds. We therefore introduce exponentially weighted
Sobolev spaces.  Given a fixed point $y\in \RR^N$, we set $\<x\>_{w} :=
\<x-w\>= (1 + |x - w|^2)^{1/2}$ and define $W_{a,w}^{m, p}(\RR^N)$ for
$m \in \ZZ_+$, $a\in \RR$, $1<p<\infty$, by
\begin{multline}\label{def.w.S}
    W_{a, w}^{r, p}(\RR^N)
    := e^{-a\<x\>_{w}} W^{r, p}(\RR^N) \\ =
    \{u:\RR^N \to \CC,\ \partial^\alpha_x \big(e^{a\<x\>_{w}}
    u(\cdot)\big), \in L^p(\RR^N),\ |\alpha| \le r \}, \quad \text{if
    } r \in \ZZ_+,
\end{multline}
with norm
\begin{equation*}
        \|u\|_{W_{a, w}^{m, p}}^p := \| e^{a\<x\>_{w}} u\|_{W^{m,p}}^p
        = \sum_{|\alpha| \le m} \|\partial^\alpha_\xi
        \big(e^{a\<x\>_{w}} u(x)\big)\|_{L^p}^p.
\end{equation*}
When it is clear from the context, we may drop the subscript $w$
from the above notation.  We observe that $W_{0}^{m,p} =
W_{0,w}^{m,p} = W^{m,p}$. The spaces $W^{r,p}_{a,w}$ and
$W^{-r,p'}_{-a,w}$ are naturally duals to each other if $1/p+1/p' =1$.

A crucial observation is that, for any $L \in \bL_\gamma$ and any $a
\in \RR$, the operators $L_a := e^{a\<x \>_{w}} L e^{-a\<x\>_{w}}$ are
also in $\bL_\gamma$. They moreover define a
bounded family in $\mathbb{L}_{\gamma}$ if $a$ is in a bounded set,
while $w$ is {\em arbitrary}.  Since proving a result for $L$ acting
between weighted Sobolev spaces $W^{s,p}_{a, w}$ is the same as
proving the corresponding result for $L_1$ acting between the Sobolev
spaces $W^{s,p} = W^{s,p}_{0, w}$, we may assume that $a=0$ and $w$ is
arbitrary.  In particular, $L : W^{s+2,p}_{a ,y} \to W^{s,p}_{a, w}$
is well defined and continuous for any $a$ and $w$, since this is true
for $a=0$.

In fact, it will be crucial for us to establish mapping properties
that are {\em independent} of $w$. This will be the case in all
estimates below, unless stated otherwise. One of the most important
example is provided by Corollary \ref{cor.wel.def}.  Moreover, the
spaces $W^{m,p}_{a, w}$ do not depend on the choice of the point $y$
(although their norm obviously does).  Because of this observation, we
shall often omit the point $w$ from the notation, when this does not
affect the clarity of the presentation.

We begin by recalling some properties of $L$ and the associated
solution operator $e^{tL}$ to the initial value problem
\eqref{eq.IVP}.

\subsection{Mapping properties}
Given a Banach $X$ and an interval $I$ of the real line, we shall
denote by $\cC(I, X)$ the space of continuous functions $u: I \to
X$. By $\cC^k(I, X)$ we shall denote the space of functions $u \in
\cC(I, X)$ such that $u^{(j)} \in \cC(I, X)$ for all $0 \le j \le
k$. We assume that $X \subset L^1_{\operatorname{loc}}(\RR^N)$, and
that $L$ is a closed unbounded operator on $X$ with domain $\cD(L)
\subset X$.

Let $g \in \cC([0,\infty), X)$.  By a {\em classical solution in $X$}
of \eqref{eq.IVP} we mean a function
\beq\label{eq.def.cs} u \in \cC([0,\infty), X) \cap
    \cC^1((0,\infty),X) \cap \cC((0,\infty),\cD(L)), \eeq
such that $\pa_t u(t) = L u(t) + g(t)$ in $X$ for all $t>0$ and $u(0)
= f$ in $X$.  (The domain of $L$ is given the graph norm $|||u||| :=
\|u\| + \|Lu\|$, which makes $\cD(L)$ a complete normed space, since
we have assumed that $L$ is closed and $X$ is complete.) In
particular, $u(0)=f$ must belong to the closure of $\cD(L)$ in $X$. In
the case of interest here, if $X=W^{s,p}_a$, then $\cD(L) =
W^{s+2,p}_a$, which is dense in $X$.

In view of Duhamel's formula (which will be justified below), we can
assume $g=0$ in Equation \eqref{eq.IVP}. We shall take our Banach
space where the solution is defined to be $X = L^p_a$ for some
arbitrary, but fixed, $p \in (1, \infty)$ and $a \ge 0$. Then Equation
\eqref{eq.IVP} becomes
\begin{equation}\label{eq.IVP0}
\begin{cases}
    \D_t u(t) - Lu(t) = 0& \hspace{1.0cm} \mbox{in}\; L^p_{a}(\RR^N),
    \\ u(0) =f & \hspace{1.0cm} f \in L^p_a(\R^N).
\end{cases}
\end{equation}
Let us notice that if $f \in \cC^\infty(\RR^N)$ also, then we recover
Equation \eqref{eq.IVP}. The growth condition $u(t) \in L^p_a$ is
needed, however, in order to insure uniqueness.

A family of (bounded) linear operators $U(t)$ on $X$, $t\geq 0$, will
be called a $\cC^0$ or {\em strongly continuous} semigroups of
operators if $U(t)$ forms a semigroup in $t$ and $U(t)u \to u$ in $X$
as $t\to 0+$. This last property shows that the function $[0, \infty)
\ni t \to U(t)f \in X$ is continuous for any $f\in X$.

 We shall need the
following standard result.  Recall the subset $\bL_\gamma \subset \bL$
introduced in Definition \ref{def.Lgamma}.

\begin{lemma}\label{lemma.st1}
(i)\ Let $L\in \bL_\gamma$, then there exists a constant $C > 0$ such
that
\[
	\gamma(\nabla u, \nabla u) - C(u, u) \le -(Lu,u) \le C(\nabla
	u,\nabla u) + C(u, u).
\]
\noindent (ii)\ The norm $|||v|||_{2m} := \|u\|_{L^p} + \|L^m u
\|_{L^p}$ is equivalent to the norm $\| \; \cdot \: \|_{W^{2m, p}}$ on
$W^{2m,p}(\RR^N)$, for any $m\in \ZZ_+$ and $1<p<\infty$.
\end{lemma}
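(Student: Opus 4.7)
The plan is to prove (i) by integrating by parts and exploiting \eqref{eq.unif.s.ell} together with the $C_b^\infty$ bounds on the coefficients, and to prove (ii) by combining the classical $L^p$-elliptic regularity estimate for $L$ with an induction on $m$.

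For (i), writing $L$ as in \eqref{eq.L2} and integrating by parts in the second-order term gives
\begin{equation*}
-(Lu,u) \;=\; \sum_{i,j}(a_{ij}\partial_j u,\partial_i u)\;+\;\sum_{i,j}((\partial_i a_{ij})\partial_j u,u)\;-\;\sum_k(b_k\partial_k u,u)\;-\;(cu,u).
\end{equation*}
Since $A$ is symmetric, the leading quadratic form is pinched between $\gamma\|\nabla u\|_{L^2}^2$ (by \eqref{eq.unif.s.ell}) and $\sup_x\|A(x)\|\,\|\nabla u\|_{L^2}^2$. Each of the three remaining terms is a product of a bounded coefficient ($\partial_i a_{ij}$, $b_k$ or $c$) with a derivative of $u$ and $u$ itself, so Cauchy--Schwarz followed by Young's inequality $2|\alpha\beta|\le\varepsilon\alpha^2+\varepsilon^{-1}\beta^2$ bounds it by $\varepsilon\|\nabla u\|_{L^2}^2 + C_\varepsilon\|u\|_{L^2}^2$. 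For the lower inequality one picks $\varepsilon$ small enough that these $\|\nabla u\|_{L^2}^2$ contributions are absorbed into $\gamma\|\nabla u\|_{L^2}^2$; for the upper inequality any fixed $\varepsilon$ works. The resulting constant $C$ depends only on $\gamma$, $N$, and the $C_b^\infty$-norms of the coefficients.

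For (ii), the upper bound $|||u|||_{2m}\le C\|u\|_{W^{2m,p}}$ is immediate since $L^m$ is a differential operator of order $2m$ with $C_b^\infty$ coefficients. For the reverse direction the main ingredient is the classical $L^p$-elliptic estimate
\begin{equation*}
\|u\|_{W^{s+2,p}}\;\le\; C\bigl(\|Lu\|_{W^{s,p}}+\|u\|_{L^p}\bigr),\qquad s\ge 0,\ 1<p<\infty,
\end{equation*}
for the uniformly strongly elliptic second-order operator $L\in\bL_\gamma$ with $C_b^\infty$ coefficients. This estimate follows from the pseudodifferential parametrix for $L$ --- the same tools that will be used in Section \ref{sec.EE} --- or alternatively from Calder\'on--Zygmund theory combined with a freezing-of-coefficients argument. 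Taking $s=0$ is the base case $m=1$ of an induction on $m$. For the inductive step, applying the estimate with $s=2m$ and then the inductive hypothesis to $v=Lu$ yields
\begin{equation*}
\|u\|_{W^{2m+2,p}}\;\le\; C\bigl(\|Lu\|_{W^{2m,p}}+\|u\|_{L^p}\bigr)\;\le\; C'\bigl(\|L^{m+1}u\|_{L^p}+\|Lu\|_{L^p}+\|u\|_{L^p}\bigr).
\end{equation*}
The residual term $\|Lu\|_{L^p}\le C\|u\|_{W^{2,p}}$ is then handled via the standard Sobolev interpolation inequality $\|u\|_{W^{2,p}}\le\delta\|u\|_{W^{2m+2,p}}+C_\delta\|u\|_{L^p}$; choosing $\delta$ small enough permits absorption into the left-hand side and closes the induction.

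The only delicate point in the whole argument is this final absorption step, which relies on the interpolation inequality between $L^p$ and $W^{2m+2,p}$. Everything else --- integration by parts, Young's inequality, and the base-case elliptic estimate --- is entirely standard, so no real obstacle is expected.
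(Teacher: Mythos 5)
Your proposal is essentially the argument the paper has in mind: the paper only says that (i) ``follows from a direct calculation'' and cites \cite{Shubin}, \cite{Tay}, \cite{MN} for (ii), and your direct integration by parts for (i) together with the standard uniform $L^p$-elliptic estimate plus induction on $m$ (with an Ehrling-type interpolation absorption) for (ii) is exactly how those references proceed; the induction step and the absorption of $\|Lu\|_{L^p}$ are set up correctly. One small caveat in (i): the lemma asserts the lower bound with the ellipticity constant $\gamma$ itself, whereas Cauchy--Schwarz/Young applied to the terms $((\partial_i a_{ij})\partial_j u,u)$ and $(b_k\partial_k u,u)$ only yields $(\gamma-\varepsilon)(\nabla u,\nabla u)-C_\varepsilon(u,u)$. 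To get exactly $\gamma$, integrate these first-order terms by parts once more (e.g. $-\mathrm{Re}\,(b_k\partial_k u,u)=\tfrac12((\partial_k b_k)u,u)$, and similarly for the $\partial_i a_{ij}$ term), so that all lower-order contributions become zeroth-order and no gradient term needs to be sacrificed; alternatively note that the $\gamma-\varepsilon$ version suffices for the Hille--Yosida application in Proposition \ref{prop.st2}. This is a one-line repair, not a structural flaw.
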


\begin{proof} (Sketch.) (i) follows from a direct calculation.
See \cite{Shubin}, \cite{Tay}
or \cite{MN} for (ii).
\end{proof}

It follows from this lemma that $L : W^{2,p} \to L^p$ is a closed,
densely defined unbounded operator on $L^p$. This technical fact is
important because it is often needed for the general results that we
will use below.

For the sake of clarity and completeness, we include here a quick
review and some proofs of the main properties of the semigroup
generated by $L$. Our proofs also serve the purpose of justifying the
perturbative expansion described in Section \ref{ssec.perturbative},
which is discussed extensively in the literature, but usually not in
the setting that we need. Further details can be found in
\cite{Lunardi,KatoBook, Pazy}. Below $p \in (1,\infty)$ and $\gamma>0$
will be arbitrary but fixed, and the constants appearing in the
estimates depend on $p$ and $\gamma$, but not on $L \in \bL_\gamma$.

\begin{proposition}\label{prop.st2} Let $a\in \RR$, $1<p<\infty$, and
$L \in \bL_\gamma$.
\begin{enumerate}[(i)]
\item For each $f\in W^{2,p}_a$, the problem \eqref{eq.IVP} has a
unique classical solution \label{it.prop.st2.i}
\begin{equation*}
  u \in \cC([0,\infty), L^p_a) \cap \cC^1((0,\infty), L^p_a) \cap
   \cC((0,\infty),W^{2,p}_a).
\end{equation*}
\item
Let $e^{tL}f := u(t)$, then we have $e^{tL} W^{r, p}_a \subset W^{r,
p}_a$ and, moreover, $\|e^{tL}f\|_{W^{r,p}_a} \le C e^{\omega t}
\|f\|_{W^{r,p}_a}$, for a constant $C$ independent of $r$, $a$, and $L
\in \bL_\gamma$ in {\em bounded} sets..
\label{it.prop.st2.ii}
\end{enumerate}
\end{proposition}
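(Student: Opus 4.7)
The plan is to reduce everything to the unweighted case $a=0$ and then invoke the standard generation theorem for analytic semigroups on $L^p(\RR^N)$, finally upgrading to the Sobolev scale using Lemma \ref{lemma.st1}(ii).

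First, as observed just before Lemma \ref{lemma.st1}, conjugation with the weight gives $L_a := e^{a\<x\>_w} L e^{-a\<x\>_w} \in \bL_\gamma$, with $C_b^\infty$ bounds on its coefficients that are uniform for $a$ in bounded intervals and for $w \in \RR^N$ arbitrary (since $\partial^\alpha \<x-w\>$ is bounded uniformly in $w$ whenever $|\alpha|\ge 1$). Since $u \mapsto e^{a\<x\>_w} u$ is an isometric isomorphism $W^{r,p}_{a,w} \to W^{r,p}$ that intertwines $\partial_t - L$ with $\partial_t - L_a$, it suffices to prove both claims for $a=0$, uniformly over $L \in \bL_\gamma$ in bounded sets.

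Second, I would show that $L$ with domain $W^{2,p}$ generates a strongly continuous (in fact analytic) semigroup on $L^p(\RR^N)$. The standard $L^p$-resolvent estimate for uniformly strongly elliptic operators with $C_b^\infty$ coefficients (via Calderón--Zygmund theory or Agmon--Douglis--Nirenberg) yields, for $\Re\lambda$ sufficiently large, an isomorphism $\lambda - L : W^{2,p} \to L^p$ with
\begin{equation*}
   \|(\lambda - L)^{-1}\|_{L^p \to L^p} \le \frac{C}{|\lambda| - \omega},
\end{equation*}
where $C$ and $\omega$ depend only on $\gamma$, $p$ and finitely many $C_b^\infty$ seminorms of the coefficients. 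Hille--Yosida then produces the semigroup $e^{tL}$ satisfying $\|e^{tL}\|_{L^p\to L^p}\le C e^{\omega t}$. The regularity $u \in \cC([0,\infty),L^p)\cap \cC^1((0,\infty),L^p)\cap \cC((0,\infty),W^{2,p})$ comes from the smoothing property of analytic semigroups together with the fact (Lemma \ref{lemma.st1}(ii)) that $\cD(L)=W^{2,p}$ as a Banach space. Uniqueness for (i) is the usual argument: if $v$ is any classical solution in $L^p$ with zero initial data, then $\frac{d}{ds}[e^{(t-s)L}v(s)]=0$ on $(0,t)$, forcing $v(t)=0$.

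Third, for (ii), I would iterate the semigroup estimate on the Sobolev scale. For $f \in W^{2m,p}$ with $m\in\ZZ_+$, Lemma \ref{lemma.st1}(ii) gives $L^m f \in L^p$, and since $L^m$ commutes with $e^{tL}$ on its domain,
\begin{equation*}
    \|L^m e^{tL} f\|_{L^p} = \|e^{tL} L^m f\|_{L^p} \le C e^{\omega t} \|L^m f\|_{L^p} \le C' e^{\omega t}\|f\|_{W^{2m,p}},
\end{equation*}
and combined with $\|e^{tL}f\|_{L^p}\le Ce^{\omega t}\|f\|_{L^p}$ and the norm equivalence in Lemma \ref{lemma.st1}(ii), this gives the bound on $W^{2m,p}$. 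For odd $r=2m+1$, complex interpolation between $W^{2m,p}$ and $W^{2m+2,p}$ produces the desired estimate on $W^{r,p}$. Unwinding the conjugation yields the weighted statement. The main subtlety, which I flag as the principal technical obstacle, is the uniformity of the constant $C$ for $L$ varying in bounded subsets of $\bL_\gamma$ and for $w \in \RR^N$ arbitrary; this is needed later for the perturbative expansion. It follows from tracking dependencies in the $L^p$-resolvent estimate and in the conjugation step, using only finitely many $C_b^\infty$ seminorms and the ellipticity constant $\gamma$.
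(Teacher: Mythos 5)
Your outline matches the paper's own proof in its essential structure: reduce to $a=0$ by conjugating with the weight, generate the semigroup on $L^p$, upgrade to $W^{2m,p}$ using the norm equivalence of Lemma \ref{lemma.st1}(ii) together with the commutation $L^m e^{tL}=e^{tL}L^m$, and then interpolate. Two remarks. First, a stylistic difference: you verify generation via $L^p$-resolvent estimates and analytic semigroup theory, whereas the paper deliberately stays with Lemma \ref{lemma.st1}(i) and the Hille--Yosida theorem (it explicitly states later that it wishes to avoid the machinery of analytic semigroups); your route is legitimate and in fact gives the $\cC^1((0,\infty),L^p)\cap\cC((0,\infty),W^{2,p})$ regularity somewhat more directly, at the price of quoting heavier elliptic $L^p$ theory.

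Second, there is a genuine (though easily repaired) gap: you only establish the estimate on $W^{r,p}$ for $r\ge 0$ (even integers, then interpolation), but the proposition is stated for the scale $W^{r,p}_a$ with $r\in\RR$ as defined in \eqref{eq.def.wsR}, and it is later applied with negative Sobolev indices --- for instance in the proof of Lemma \ref{lemma.perturbative}, where the indices $r_j=r_{j+1}-4$ descend below zero, and in the extension of \eqref{eq.estimate1} used for Proposition \ref{prop.mapgeneral}. The paper closes this by a duality step: since $L^\ast\in\bL_\gamma$ is an operator of the same class, the $W^{2m,p}$ estimate holds for $e^{tL^\ast}$ as well, and duality between $W^{2m,p}$ and $W^{-2m,p'}$ (with $1/p+1/p'=1$) transfers the bound to $W^{-2m,p'}$, after which interpolation covers all real $r$. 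Your argument as written does not produce the negative-order estimates, so you should add this duality step (or an equivalent device) to obtain the full range of $r$ claimed in the statement.
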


\begin{proof} We can assume $a=0$, as explained above.
Lemma \ref{lemma.st1} (i) gives that $L$ satisfies the assumptions of
the Hille-Yosida theorem \cite{Evans, Lunardi, Pazy}, and hence
$e^{tL}$ is defined, is a $C^0$ semigroup, and $u(t) := e^{tL}f$ is
indeed a classical solution. This proves (i).

It also follows from standard properties of $C^0$-semigroups in Banach
spaces that $\|e^{tL}f\|_{L^p} \le C e^{\omega t}\|f\|_{L^p}$ for some
constants $C>0$ and $\omega \in \RR$ independent of $L \in
\bL_\gamma$. To prove (ii), we then notice that
\begin{multline}\label{eq.estimate1}
    \| e^{tL}f \|_{W^{2m,p}} \le C ||| e^{tL}f |||_{2m,p} = C \big( \|
    e^{tL}f \|_{L^p} + \| L^m e^{tL}f \|_{L^p} \big) \\ = C \big( \|
    e^{tL}f \|_{L^p} + \| e^{tL} L^m f \|_{L^p} \big) \le C e^{\omega
    t} \big( \| f \|_{L^p} + \| L^m f \|_{L^p} \big) \\ = C e^{\omega
    t} ||| f |||_{2m} \le C e^{\omega t} \| f \|_{W^{2m,p}},
\end{multline}
with constants depending on $m$, $p$, and $L$, but not on $t$.  Though
$L$ may not be self-adjoint, the adjoint $L^\ast$ is an operator of
the same type, in the sense that $L^\ast \in \bL_\gamma$. Hence the
estimate above holds for $L^\ast$, with possibly different
constants. We can then extend Equation \eqref{eq.estimate1} to
$W^{-2m,p'}$, $m\in \ZZ_+$, by duality and to any $W^{r,p}$ by
interpolation (see for example \cite{BL, T} for results on
interpolation).  This completes the proof.
\end{proof}

{From} now on we shall denote by $e^{tL}$ the $\cC^0$-semigroup
generated by $L$ on $L^p_a= e^{a\<x\>_{z}} L^p(\RR^N)$, with $p$ and
$a$ determined by the context (usually arbitrary, but fixed).

We recall that for $f \in \cD(L)$, the map $t \to e^{tL}f$ is in
$\cC^1([0,\infty), X)$ and $\pa_t e^{tL}f = e^{tL}Lf = Le^{tL}f$.  For
any two normed spaces $X$ and $Y$, we denote by $\cB(X, Y)$ the normed
space of continuous, linear operators $T: X \to Y$ with norm $\|T\|_{X
\to Y}$.  When $X=Y$, we shall also write $\|T\|_{X} := \|T\|_{X \to
X}$ and $\cB(X) := \cB(X, X)$. The identity operator of any space will
be denoted by $1$.

\begin{lemma}\label{lemma.cont}\ Let $L \in \bL_\gamma$. We have
$\|e^{tL} - 1\|_{W^{s+2, p}_a \to W^{s, p}_a} \le Ct$, for any $t \in
(0, 1]$.  In particular, $[0, \infty) \ni t \to e^{tL} \in \cB(W^{s+2,
p}_a,W^{s, p}_a)$ is continuous.
\end{lemma}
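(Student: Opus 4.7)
The plan is to apply the fundamental theorem of calculus to the semigroup. For $f$ in a dense subspace of $W^{s+2,p}_a$ (e.g.\ Schwartz functions, which lie in $W^{2,p}_a = \cD(L)$ acting on $L^p_a$), Proposition \ref{prop.st2}(i) yields
\begin{equation*}
    e^{tL}f - f \;=\; \int_0^t \partial_\sigma e^{\sigma L}f \, d\sigma \;=\; \int_0^t e^{\sigma L} L f \, d\sigma
\end{equation*}
as an equality in $L^p_a$, using that $L$ commutes with $e^{\sigma L}$ on its domain. To convert this into a $W^{s,p}_a$ estimate rather than an $L^p_a$ one, I would test against $g$ in a dense subset of the dual $W^{-s,p'}_{-a}$ (via the $L^2$-pairing); the integral then becomes scalar and sidesteps any Bochner-measurability subtleties.

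For the pointwise bound on the integrand, two ingredients suffice: (a) since $L$ has coefficients in $C^\infty_b(\RR^N)$ and multiplication by such functions preserves the spaces $W^{s,p}_a$, the operator $L\colon W^{s+2,p}_a \to W^{s,p}_a$ is bounded, with norm uniform for $L$ in bounded subsets of $\bL_\gamma$; and (b) Proposition \ref{prop.st2}(ii) gives $\|e^{\sigma L}\|_{W^{s,p}_a \to W^{s,p}_a} \le C e^{\omega \sigma}$. Combining yields $\|e^{\sigma L} Lf\|_{W^{s,p}_a} \le C e^{\omega \sigma}\|f\|_{W^{s+2,p}_a}$; integrating over $\sigma \in [0,t] \subset [0,1]$ and taking the supremum over unit-norm $g$ in the dual produces $\|e^{tL}f - f\|_{W^{s,p}_a} \le Ct\|f\|_{W^{s+2,p}_a}$ on the dense subspace, hence everywhere by continuity. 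For arbitrary real $s$ one extends by interpolation of the weighted Sobolev spaces between integer endpoints.

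Continuity at $t_0 \ge 0$ then follows from the semigroup law: for $0 \le t_0 \le t$,
\begin{equation*}
    e^{tL} - e^{t_0 L} \;=\; e^{t_0 L}\bigl(e^{(t-t_0)L} - 1\bigr),
\end{equation*}
so the first estimate combined with the uniform bound $\|e^{t_0 L}\|_{W^{s,p}_a \to W^{s,p}_a} \le C$ for $t_0$ in a bounded set yields $\|e^{tL} - e^{t_0 L}\|_{W^{s+2,p}_a \to W^{s,p}_a} \le C|t-t_0|$, i.e.\ Lipschitz continuity in the operator-norm topology on $\cB(W^{s+2,p}_a, W^{s,p}_a)$.

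The only real subtlety is justifying the FTC identity at the level of $W^{s,p}_a$ (not merely $L^p_a$), especially for $s < 0$ where $W^{s+2,p}_a$ need not sit inside the $L^p_a$-domain $W^{2,p}_a$ of $L$. The duality/density argument outlined above handles this cleanly, and since $L^* \in \bL_\gamma$ all required bounds pass to the dual spaces with the same uniform constants.
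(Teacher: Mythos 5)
Your proposal follows essentially the same route as the paper's proof: the fundamental-theorem identity $e^{tL}f - f = \int_0^t e^{\sigma L}Lf\,d\sigma$, estimated via the semigroup bound of Proposition \ref{prop.st2}(ii) together with the boundedness of $L\colon W^{s+2,p}_a \to W^{s,p}_a$, and then the semigroup factorization $e^{t_1L}-e^{t_2L} = \bigl(e^{(t_1-t_2)L}-1\bigr)e^{t_2L}$ for continuity. The additional density, duality, and interpolation remarks are just a more careful write-up of the same argument, so the proposal is correct.
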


\begin{proof}
We have $e^{tL}f - f = \int_{0}^t e^{sL}Lf ds$ for any $f \in
W^{2,p}_a$, by standard properties of $\cC^0$-semigroups. Lemma
\ref{prop.st2} \eqref{it.prop.st2.ii} then gives
\begin{equation*}
    \|e^{tL}f - f\|_{W^{s,p}_a} \le \int_{0}^t \|e^{sL}\|_{W^{s,p}_a}
    \|L f\|_{W^{s,p}_a} ds \le C te^{\omega t} \|f\|_{W^{s+2,p}_a},
\end{equation*}
which proves the first part of the result.

Let now $t_1 \ge t_2$. Then
\begin{equation*}
    \|e^{t_1L} - e^{t_2L}\|_{W^{s+2, p}_a \to W^{s, p}_a} \le
    \|e^{(t_1- t_2) L} - 1\|_{W^{s+2, p}_a \to W^{s, p}_a}
    \|e^{t_2L}\|_{W^{s, p}_a}.
\end{equation*}
This completes the second part of the proof.
\end{proof}

\begin{remark}
Let $\delta \in (0, 2]$. Then an interpolation argument gives
$\|e^{tL} - 1\|_{W^{s+\delta, p}_a \to W^{s, p}_a} \le Ct^{\delta/2}$,
for any $t \in (0, 1]$. Hence the function $[0, \infty) \ni t \to
e^{tL} \in \cB(W^{s+\delta, p}_a,W^{s, p}_a)$ is also continuous.
\end{remark}

We discuss smoothing properties of $e^{tL}$, it is convenient to first
assume $L^* = L$, that is that $L$ is self-adjoint. This will require
us to set $a=0$ in our weighted Sobolev spaces $W^{s,p}_a =
W^{s,p}_a(\RR^N)$.  This assumption will be removed later on.
The following result is known, we sketch a proof for
completeness. (See for example \cite{Pazy} and \cite{MN} in the more
general case of manifolds with bounded geometry.)

\begin{corollary}\label{cor.st2}
Let $t>0$. There exist constants $C_{r,s} >0$ such that, for any $L
\in \bL_\gamma$ with $L=L^\ast$:
\begin{enumerate}[(i)]
\item $\| e^{tL}f \|_{W^{r,p}(\RR^N)} \le C_{r, s} t^{ (s-r)/2 }
\|f\|_{W^{s,p}(\RR^N)}$, $r \ge s$ real. \label{it.cor.st2.1}
\item There exists $\cG_t^L(x,y) \in \cC^\infty((0,\infty) \times
\bR^N \times \bR^N)$ such that
\begin{align}
e^{tL} f(x) = \int_{\bR^N} \cG^L_t(x,y) f(y)dy.
\end{align}  \label{it.cor.st2.2}
\end{enumerate}
\end{corollary}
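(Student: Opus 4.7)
My plan for (i) is to leverage the self-adjointness hypothesis $L = L^*$. Combined with Lemma \ref{lemma.st1}(i), this makes $L - C$ a non-positive self-adjoint operator on $L^2(\RR^N)$, so by the spectral theorem $e^{tL}$ is an analytic semigroup on $L^2$, giving
\[
\|L^m e^{tL}\|_{L^2 \to L^2} \le C_m t^{-m} e^{\omega t}, \quad m \in \ZZ_+, \ t > 0,
\]
with constants uniform over $L$ in bounded subsets of $\bL_\gamma$. To pass to $L^p$ for $1 < p < \infty$, I would invoke the standard fact that uniformly elliptic operators in $\bL_\gamma$ generate analytic semigroups on $L^p$ (see Lunardi \cite{Lunardi} or Pazy \cite{Pazy}), so that the analogous bound $\|L^m e^{tL}\|_{L^p \to L^p} \le C_m t^{-m} e^{\omega t}$ holds. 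Combining with Lemma \ref{lemma.st1}(ii), which equates the graph norm of $L^m$ on $L^p$ with the $W^{2m,p}$-norm, this would yield
\[
\|e^{tL}f\|_{W^{2m,p}} \le C_m t^{-m} \|f\|_{L^p}, \quad t \in (0, 1].
\]

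To reach arbitrary real $r \ge s$, I would interpolate between $\|e^{tL}\|_{L^p \to L^p} \le C$ and $\|e^{tL}\|_{L^p \to W^{2m,p}} \le C_m t^{-m}$ via complex interpolation of Sobolev spaces, obtaining $\|e^{tL}\|_{L^p \to W^{r,p}} \le C_r t^{-r/2}$ for $r \in [0, 2m]$; since $m$ is arbitrary, this covers all $r \ge 0$. Because $L^* \in \bL_\gamma$ is also self-adjoint, the same estimate holds for $e^{tL^*}$, and dualizing would give $\|e^{tL}\|_{W^{-r,p} \to L^p} \le C_r t^{-r/2}$ for $r \ge 0$. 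The full estimate for any real $s \le r$ then follows by splitting $e^{tL} = e^{(t/2)L} \circ e^{(t/2)L}$ and chaining the two types of one-sided bounds, using $(s-r)/2 = s/2 + (-r)/2$.

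For (ii), part (i) applied with $r$ arbitrarily large shows $e^{tL}: L^p \to \bigcap_r W^{r,p} \subset \cC^\infty(\RR^N)$ by Sobolev embedding, and similarly for $e^{tL^*}$. The Schwartz kernel theorem then produces a distributional kernel $\cG_t^L(x,y)$, and the smoothing properties above, applied in $x$ through $e^{tL}$ and in $y$ through $e^{tL^*}$, upgrade it to a jointly smooth function on $\RR^N \times \RR^N$. Joint smoothness in $(t,x,y)$ follows from analyticity in $t$ of the semigroup together with $\partial_t^k \cG_t^L = (L_x)^k \cG_t^L$, which transfers $t$-regularity to spatial regularity via (i). The main obstacle is the extension from $L^2$-analyticity to $L^p$-analyticity; the cleanest route is to quote the standard $L^p$ analytic-semigroup theorem for uniformly elliptic operators, but one could alternatively derive it here via resolvent estimates coming from Agmon--Douglis--Nirenberg elliptic $L^p$-regularity.
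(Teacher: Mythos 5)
Your proposal is correct in substance, and for part (ii) it is essentially the paper's own argument: the paper likewise deduces from (i) that $e^{tL}$ maps $\mathcal{E}'(\RR^N)$ into $C^\infty(\RR^N)$ and invokes the Schwartz kernel theorem, writing $\cG^L_t(x,y)=\langle \delta_x, e^{tL}\delta_y\rangle$; your use of $e^{tL^*}$ for smoothness in $y$ and of $\partial_t^k e^{tL}=L^k e^{tL}$ for smoothness in $t$ just fills in what the paper leaves implicit. For part (i) the paper's proof is only a two-line sketch (``resolvent estimates and a scaling-in-time argument''), and your elaboration --- spectral theorem on $L^2$, the graph-norm equivalence of Lemma \ref{lemma.st1}(ii), duality and interpolation --- is exactly the toolkit the paper itself uses in Proposition \ref{prop.st2}, so in spirit it is the same route. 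The one genuine divergence is your appeal to the theorem that operators in $\bL_\gamma$ generate analytic semigroups on $L^p$: the paper states explicitly, just before Lemma \ref{lemma.Duhamel}, that it prefers to avoid analytic-semigroup theory and to use Corollary \ref{cor.st2} instead; the restriction to $L=L^*$ in the corollary exists precisely so that the needed resolvent/sectoriality estimates are elementary. Quoting the $L^p$ generation theorem is legitimate (and would even make the hypothesis $L=L^*$ superfluous for (i)), but it inverts the paper's intended logical economy; if you keep the self-adjointness route, you still need $L^p$ (not just $L^2$) resolvent bounds, which is presumably what the paper means by ``resolvent estimates.''

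One step in your write-up does not close as stated: composing $\|e^{(t/2)L}\|_{W^{s,p}\to L^p}\le Ct^{s/2}$ (valid only for $s\le 0$) with $\|e^{(t/2)L}\|_{L^p\to W^{r,p}}\le Ct^{-r/2}$ (valid for $r\ge 0$) covers only $s\le 0\le r$; for $0<s\le r$ (or $s\le r<0$) this composition gives the weaker exponent $-r/2$ rather than $(s-r)/2$. The sharp exponent in that range comes instead from the commutation $L^{m}e^{tL}=e^{tL}L^{m}$ together with Lemma \ref{lemma.st1}(ii): $\|e^{tL}f\|_{W^{2k+2m,p}}\le C\big(\|e^{tL}f\|_{L^p}+\|L^{m}e^{tL}L^{k}f\|_{L^p}\big)\le Ct^{-m}\|f\|_{W^{2k,p}}$, after which the duality and interpolation you already invoke handle negative and non-integer indices. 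The fix is entirely within your toolkit (and mirrors the computation in the proof of Proposition \ref{prop.st2}), but it should be stated, since the corollary claims the estimate for all real $r\ge s$.
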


\begin{proof}
The part (\ref{it.cor.st2.1}) can be proved using resolvent estimates
and a scaling-in-time argument.  Part (\ref{it.cor.st2.2}) folllows
from the Schwartz kernel theorem (see for example \cite[Chapter
7]{TayPDEII}), since from (\ref{it.cor.st2.1}) $e^{tL}$ maps compactly
supported distributions in $\mathcal{E}'(\RR^N)$ to smooth functions
in $C^\infty(\RR^N)$. In fact, if we denote by $<,>$ the duality
pairing between $C^\infty$ and $\mathcal{E}'$, we explicitly have:
\begin{equation} \label{eq.kerneldef}
      \cG^L_t(x,y)=<\delta_x,e^{tL} \delta_y>,
\end{equation}
where $\delta_z$, $z \in \RR^N$, represents the Dirac delta
distributions supported at $z$ (\ie $\delta_z(f) = f(z)$).
\end{proof}

We now proceed to eliminate the assumption that $L^* = L$ in the above
result.  First, let us notice that if $L, L_0 \in \bL_\gamma$, and if
we denote $V= L - L_0$ and $g(t, x) = Vu(x, t)$, then \eqref{eq.IVP}
becomes
\begin{equation}\label{eq.inhomogeneous-IVP}
\begin{cases}
  \D_t u - L_0 u = g & \hspace{1.0cm} \mbox{in}\; (0,\infty)\times \bR^N,\\
  u(0, x) = f (x) & \hspace{1.0cm} \mbox{on}\; \{0\}\times \bR^N.
\end{cases}
\end{equation}
It is well-know that applying Duhamel's formula, gives a Volterra
integral equation of the first kind for $u$. If $L=L_0^\ast$, the
solution of the integral equation is a classical solution of
\eqref{eq.inhomogeneous-IVP}. in fact, it is enough that $e^{tL_0}$
generates an analytic semigroup (see \cite[Theorem 2.4, page
107]{Pazy}). For simplicity, we want to avoid using the theory of
analytic semigroups, and rather use instead Corrolary \ref{cor.st2}.

\begin{lemma} \label{lemma.Duhamel}
Let us assume that $g \in \cC([0,\infty), L^p)$. Then the classical
$L^p$-solution of the problem \eqref{eq.inhomogeneous-IVP} is given by
\begin{equation} \label{eq.Duhamel0}
    u(t) = e^{t L_0}f + \int_0^t e^{ ( t  -\tau)L_0}g(\tau) d\tau.
\end{equation}
Assume that $L\in \bL_\gamma$, and let $L_0=(L^\ast + L)/2$.  Then,
the classical $L^p$-solution $u(t) =: e^{tL}f$ to the problem
\eqref{eq.IVP0} is given by:
\begin{equation} \label{eq.Duhamel}
    e^{t L}f = e^{t L_0}f + \int_0^t e^{ (  t -\tau)L_0}(L -
    L_0)e^{ \tau L}f d\tau,
\end{equation}
for any initial data $f \in L^{p}$, $1<p<\infty$.
\end{lemma}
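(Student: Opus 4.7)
The plan is to exploit the decomposition $L = L_0 + (L - L_0)$ where $L_0 := (L + L^*)/2 \in \bL_\gamma$ is self-adjoint and still uniformly strongly elliptic with the same constant $\gamma$ (the symmetric part of the principal symbol is unchanged). The crucial algebraic observation is that $L - L_0 = (L - L^*)/2$ has zero second-order part, hence is a first-order differential operator with $\cC_b^\infty$ coefficients. Because $L_0$ is self-adjoint, Corollary \ref{cor.st2} applies to $e^{tL_0}$ and supplies the smoothing estimates $\|e^{tL_0}f\|_{W^{r,p}} \le C\, t^{(s-r)/2}\, \|f\|_{W^{s,p}}$ for $r \ge s$, which will absorb all derivative losses in the Duhamel integrals below.

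For part (1), the formula \eqref{eq.Duhamel0} is the standard Duhamel representation for the inhomogeneous Cauchy problem associated with a generator. One verifies $u(0)=f$ trivially, and for $f \in W^{2,p}$ the map $t\mapsto e^{tL_0}f$ is differentiable in $L^p$ with derivative $L_0 e^{tL_0}f$ by Proposition \ref{prop.st2}. For the convolution term $v(t) := \int_0^t e^{(t-\tau)L_0}g(\tau)\,d\tau$, I would exploit the analyticity of the self-adjoint semigroup, via the smoothing of Corollary \ref{cor.st2}(i), to rewrite $v(t)=\int_0^t e^{\sigma L_0}g(t-\sigma)\,d\sigma$, split the interval near the singular endpoint $\sigma = 0$, and differentiate under the integral on the regular part to obtain $\partial_t v = L_0 v + g(t)$ in $L^p$. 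Uniqueness of classical $L^p$-solutions is immediate from the fact that $L_0$ generates a $\cC^0$-semigroup.

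For part (2), I would first establish the formula for regular data $f \in W^{2,p}$. For such $f$, $u(t) := e^{tL}f$ is the classical $L^p$-solution of \eqref{eq.IVP0} by Proposition \ref{prop.st2}, and one may rewrite $\partial_t u = L u = L_0 u + g$ with $g(\tau) := (L-L_0)e^{\tau L}f$. The continuity $g \in \cC([0,\infty), L^p)$ follows from the continuity of $\tau \mapsto e^{\tau L}f$ with values in $W^{2,p}$ (Lemma \ref{lemma.cont}) combined with the fact that $L - L_0$ is first order with bounded smooth coefficients. Applying part (1) with this choice of $g$ and the uniqueness of the classical solution then yields \eqref{eq.Duhamel}. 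To extend to arbitrary $f \in L^p$, I would check that both sides of \eqref{eq.Duhamel} depend continuously on $f$ in $L^p$: the left side does so by Proposition \ref{prop.st2}(ii), while for the right side the key point is that $e^{(t-\tau)L_0}(L - L_0)$ extends to a bounded operator on $L^p$ with norm $\le C (t-\tau)^{-1/2}$, integrable on $[0,t]$. This follows from $L - L_0 : L^p \to W^{-1,p}$ (dual to its action $W^{1,p'} \to L^{p'}$) composed with $e^{(t-\tau)L_0} : W^{-1,p} \to L^p$ of norm $C(t-\tau)^{-1/2}$.

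The main obstacle is extending the smoothing estimates of Corollary \ref{cor.st2}, stated there for nonnegative Sobolev indices, to negative indices. This requires a duality argument using the self-adjointness of $L_0$ together with the identification $(W^{r,p'})^* = W^{-r,p}$, so that the known estimate on $e^{tL_0} : L^{p'} \to W^{1,p'}$ transposes to $W^{-1,p} \to L^p$. A secondary technical point is the justification of differentiation under the integral sign in part (1) when $g$ is only continuous, handled by approximating $g$ by time-mollifications (for which classical semigroup theory applies) and passing to the limit using the uniform $(t-\tau)^{-1/2}$-type bounds provided by the analytic semigroup $e^{tL_0}$.
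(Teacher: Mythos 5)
Your proposal is correct and follows essentially the same route as the paper: decompose $L = L_0 + V$ with $L_0 = (L+L^\ast)/2$ self-adjoint and $V$ of order one, verify the Duhamel representation first for regular initial data, and then extend to all $f \in L^p$ by density using the bound $\|e^{(t-\tau)L_0} V e^{\tau L} f\|_{L^p} \le C (t-\tau)^{-1/2}\|f\|_{L^p}$ coming from $V : L^p \to W^{-1,p}$ and the smoothing of the self-adjoint semigroup. The only cosmetic differences are that the paper runs the regular-data step with $f \in W^{1,p}$ rather than $W^{2,p}$, and that the negative-index smoothing estimate you propose to obtain by duality is already contained in Corollary \ref{cor.st2}, which is stated for all real $r \ge s$.
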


\begin{proof}
Let us notice that $u$ is defined since $e^{(t-\tau)L_0}g(\tau)$ is
continuous in $\tau$.  For $f \in W^{2, p}$ and $g \in \cC([0,\infty),
W^{2, p})$, the function $u$ is also differentiable and
\begin{equation*}
  u'(t) = L_0 e^{t L_0}f + \int_0^t L_0 e^{(t-\tau)L_0}g(\tau) d\tau +
  g(t) = L_0u(t) + g(t).
\end{equation*}
Since $\cC([0,\infty), W^{2, p})$ is dense in $\cC([0,\infty), L^p)$
(for the topology of uniform convergence on compacta), this proves the
first part.

To prove the second part, let us chose $f \in W^{1,p}$, then the
function $[0, \infty) \ni \tau \to (L - L_0)e^{\tau L}f =g(\tau) \in
L^p$ is continuous (since $L-L_0$ is in $\bL$ and has order at most
one), and hence we can apply the first part to obtain the formula
\eqref{eq.Duhamel}.

In general, using Proposition \ref{prop.st2}, part
\eqref{it.prop.st2.ii}, and the fact that $V\in \bL$ and has order at
most one, we obtain by Corollary \ref{cor.st2}, part
\eqref{it.cor.st2.1} that \ $\|e^{(t-\tau)L_0} (L-L_0) e^{\tau
L}f\|_{L^p} \le C t^{-1/2} \|f\|_{L^{p}}$, so that the integral on the
right hand side of \eqref{eq.Duhamel} is defined and continuous in $f
\in L^p$.  Since the left hand side of \eqref{eq.Duhamel} is also
continuous in $f \in L^p$, the result then follows by continuity and
by density of $W^{1,p}$ in $L^p$.
\end{proof}

\begin{remark}
The integral $\int_0^t e^{ ( t -\tau)L_0}(L - L_0)e^{ \tau L}f d\tau$
is defined either as a Bochner integral (for the definition of the
Bochner integral see e.g. \cite{BochnerBook}) or as the limit
$\lim_{\epsilon \searrow 0}\int_{ \epsilon}^{t -\epsilon}
e^{(t-\tau)L_0}(L - L_0)e^{\tau L}f d\tau$ of Riemann integral for
continuous functions.
\end{remark}

We now extend Corollary \ref{cor.st2} to non self-adjoint operators
$L$ and to the exponentially weighted spaces $W^{s,p}_a$.

\begin{proposition} \label{prop.mapgeneral}
Let $L \in \bL_\gamma$ arbitrary. We have $e^{tL}W^{s,p}_a \subset
W^{r,p}_a$ for all $r, s, a \in \RR$, $1<p<\infty$,  and $t>0$.
 Let $r \ge s$, then
\begin{equation*}
    \| u(t) \|_{W_a^{r,p}}  \le C  t^{ (s-r)/2 } \|f\|_{W_a^{s,p}},
    \quad t \in (0, 1].
\end{equation*}
The constant $C$ above is independent $r$, $s$, $a$, $p$, and $L$, as
long as they belong to bounded sets.
\end{proposition}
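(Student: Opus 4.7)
\textbf{Proof proposal for Proposition \ref{prop.mapgeneral}.}
The plan is to reduce to the self-adjoint case already handled in Corollary~\ref{cor.st2} by means of a finite Dyson-series expansion. First, as noted in the paragraph preceding Lemma~\ref{lemma.st1}, the conjugation $L \mapsto L_a := e^{a\langle x\rangle_w} L e^{-a\langle x\rangle_w}$ sends $\bL_\gamma$ to a bounded subset of $\bL_\gamma$ uniformly for $a$ and $w$ in bounded sets, and intertwines $e^{tL}$ with $e^{tL_a}$; hence it suffices to prove the estimate for $a=0$ with constants uniform in $\bL_\gamma$. Next, set $L_0 := (L+L^\ast)/2$ and $V := L - L_0 = (L-L^\ast)/2$. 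Since $A=[a_{ij}]$ is symmetric, the second-order parts of $L$ and $L^\ast$ agree modulo first-order terms, so $V$ is a first-order differential operator with coefficients in $\cC^\infty_b$, and $L_0 \in \bL_\gamma$ is self-adjoint. In particular $V : W^{r+1,p} \to W^{r,p}$ is bounded uniformly over $L \in \bL_\gamma$ in bounded sets.

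Iterating the Duhamel formula in Lemma~\ref{lemma.Duhamel} $n+1$ times produces the finite Dyson expansion
\begin{equation*}
    e^{tL} \;=\; \sum_{k=0}^{n} I_k(t) \;+\; R_n(t),
\end{equation*}
where
\begin{equation*}
    I_k(t) \;=\; \int_{\Delta_k(t)} e^{\tau_0 L_0} V e^{\tau_1 L_0} V \cdots V e^{\tau_k L_0}\, d\tau,
\end{equation*}
with $\Delta_k(t) = \{\tau_j \ge 0 : \sum_{j=0}^k \tau_j = t\}$, and the remainder $R_n(t)$ has an identical structure but with $e^{\tau_{n+1} L}$ in the last slot. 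Applying Corollary~\ref{cor.st2}\eqref{it.cor.st2.1} to each $e^{\tau_j L_0}$ and the first-order bound on $V$, distribute the required gain of $r - s + k$ derivatives across the $k+1$ heat kernels by choosing exponents $\alpha_j \in [0,2)$ with $\sum_j \alpha_j = r-s+k$ (this is possible once $k$ is large enough, namely $k \ge \lceil (r-s)/2 \rceil$). This yields the pointwise bound $\prod_j \tau_j^{-\alpha_j/2}$, and integration over $\Delta_k(t)$ using the Dirichlet/beta integral gives
\begin{equation*}
    \|I_k(t)\|_{W^{s,p} \to W^{r,p}} \;\le\; C_k\, t^{(k+s-r)/2},
\end{equation*}
so for $t \in (0,1]$ each term is dominated by the leading $k=0$ contribution $\|e^{tL_0}\|_{W^{s,p}\to W^{r,p}} \le C t^{(s-r)/2}$.

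To handle the remainder, use Proposition~\ref{prop.st2}\eqref{it.prop.st2.ii} to bound $e^{\tau_{n+1}L}$ on $W^{s,p}$ and repeat the simplex estimate, obtaining $\|R_n(t)\|_{W^{s,p}\to W^{r,p}} \le C t^{(n+1+s-r)/2}$; choosing $n$ large relative to $r-s$ makes this negligible. Combining the bounds gives the claim for $r \ge s$ in the range where the $\alpha_j$ can be chosen in $[0,2)$, i.e.\ for all $r \ge s$ once $n$ is taken large enough. For general real $s$, a duality argument using $(e^{tL})^\ast = e^{tL^\ast}$ and $L^\ast \in \bL_\gamma$ extends the estimate to negative regularity indices, and real interpolation as in \cite{BL,T} fills in non-integer $r, s$. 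The main obstacle is the combinatorial bookkeeping in Step three: one must verify that the choice of the exponents $\alpha_j$ and the resulting $\Gamma$-function factors stay bounded uniformly as $L$ ranges over bounded subsets of $\bL_\gamma$ and that the finite-$n$ truncation still captures the full smoothing rate $t^{(s-r)/2}$; once this is set up carefully, the uniform dependence of the constant $C$ on $r$, $s$, $a$, $p$, and $L$ claimed in the statement follows from the uniform dependence of the constants in Corollary~\ref{cor.st2} and Proposition~\ref{prop.st2}.
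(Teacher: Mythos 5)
Your reduction to $a=0$, the choice $L_0=(L+L^\ast)/2$, $V=L-L_0$ of order one, and the use of Corollary \ref{cor.st2} for the self-adjoint part all match the paper's setup, but your way of extracting the smoothing rate from a full finite Dyson expansion has a genuine gap. The problem is the intermediate terms $I_k(t)$ with $1\le k\le r-s-2$ (which occur as soon as $r-s\ge 3$): each of the $k+1$ heat factors $e^{\tau_j L_0}$ can only be allowed a gain $\alpha_j<2$ if the simplex integral of $\prod_j\tau_j^{-\alpha_j/2}$ is to converge, so the total available gain is strictly less than $2(k+1)$, while the term needs a gain of $r-s+k$ (the $k$ extra derivatives being lost to the $k$ copies of $V$). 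You note the constraint yourself (``possible once $k\ge\lceil(r-s)/2\rceil$''), but then assert that the claim follows ``for all $r\ge s$ once $n$ is taken large enough''; this is not so, because increasing the truncation order $n$ only adds further terms and leaves the problematic low-$k$ terms $I_1,\dots$ untouched. For instance with $r-s=3$ the single term $I_1(t)=\int_{\tau_0+\tau_1=t}e^{\tau_0L_0}Ve^{\tau_1L_0}\,d\tau$ cannot be estimated $W^{s,p}\to W^{r,p}$ by your exponent bookkeeping for any $n$. The estimate for such terms is in fact true, but it requires an extra idea, e.g.\ splitting the simplex according to which $\tau_j$ is the largest (at least one $\tau_j\ge t/(k+1)$) and loading the large derivative gain onto that factor while the remaining factors take gains below $2$; you neither state nor prove this, so the argument as written does not close.

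The paper avoids this combinatorial difficulty entirely: it applies Duhamel only \emph{once}, proving the bound $\|e^{tL}\|_{W^{s,p}\to W^{r,p}}\le Ct^{(s-r)/2}$ first in the restricted range $s\le r<s+1$ (where the single integral $\int_0^t(t-\tau)^{(s-1-r)/2}d\tau$ converges), and then obtains the general case from the semigroup identity $e^{tL}=\bigl(e^{tL/m}\bigr)^m$ with $m>r-s$ and submultiplicativity of operator norms. If you want to keep your Dyson-series route you must add the simplex-splitting argument (or an equivalent device) for the low-$k$ terms and redo the uniformity bookkeeping; otherwise the paper's two-step bootstrap is the shorter and cleaner repair.
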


We recall that $W^{s,p}_a$ is independent of the choice of the point
$z$ (see Equation \eqref{def.w.S}). The constant $C$ in the above
proposition is also independent of $z\in \RR^N$ since the family
$e^{a\<x\>_{z}} L e^{-a\<x\>_{z}}$ is uniformly bounded in
$\bL_\gamma$ for $z \in \RR^N$ and $a$ in a bounded set. For this
reason, we shall sometimes drop the index $z$ from the notation
$\<x\>_{z}$.

\begin{proof}
As discussed above, we may assume that $a=0$.  Also, note that we
already know that $e^{tL}W^{s,p}_a \subset W^{r,p}_a$ for all $r \le
s$, so let us concentrate on the non-trivial case $r \ge s$.  Let $L_0
:= (L + L^*)/2$ and $V = L - L_0$. Then $V \in \bL$ is a differential
operator of order at most one. By Lemma \ref{lemma.Duhamel},
\begin{equation*}
    e^{tL} = e^{tL_0} + \int_0^t e^{(t-\tau)L_0} V e^{\tau L}d\tau.
\end{equation*}
Let us assume also that $s \le r < s+1$. Using also Proposition
\ref{prop.st2}, part \ref{it.prop.st2.ii}, we obtain that the norm
$\|e^{tL}\|_{W^{s,p} \to W^{r,p}}$ of $e^{tL}$ as linear map $W^{s,p}
\to W^{r,p}$ can be bounded as
\begin{multline*}
  \|e^{tL}\|_{W^{s,p} \to W^{r,p}} \le
  \|e^{tL_0}\|_{W^{s,p} \to W^{r,p}} \\ + \int_0^t
  \|e^{(t-\tau)L_0}\|_{W^{s-1,p} \to W^{r,p}} \|V\|_{W^{s,p} \to
  W^{s-1,p}} \|e^{\tau L_1}\|_{W^{s,p} } d\tau \\ \le C \Big
  ( t^{(s-r)/2} + \int_0^t (t-\tau)^{(s-1-r)/2}d\tau \Big ) =
  Ct^{(s-r)/2}(1 + t^{1/2}) \le Ct^{(r-s)/2},
\end{multline*}
where in the last inequality we have used that $0<t\le 1$, and where
$C>0$ is a generic constant, different at each appearance.  The
general case follows from this one as follows. Let $\delta = (r-s)/m$, for
$m > r-s$. We first notice that
$\|e^{tL/m}\|_{W^{s + (j-1)\delta,p} \to
W^{s + j\delta,p}} \le C (t/m)^{(s-r)/(2m)}$ by the result that we have
just proved, since $\delta < 1$.
We then write $e^{tL} = \big( e^{tL/m}
\big)^m$ and we use the submultiplicative property of the norm to obtain
$\|e^{tL}\|_{W^{s,p} \to W^{r,p}} \le C (t/m)^{m(s-r)/(2m)} = C' t^{(s-r)/2}$.
\end{proof}

In particular, Proposition \ref{prop.mapgeneral} gives the existence
of the Green's function $\cG^{L}_t(x, y)$ for any $L \in \bL_\gamma$,
defined again via formula \eqref{eq.kerneldef}.  In particular for
$t>0$, this kernel is a smooth function of $x$ and $y$. We will also
use the notation $\cG^{L}_t(x, y) = e^{tL}(x, y)$. The following
corollary is a consequence of Proposition \ref{prop.mapgeneral}.

\begin{corollary}\label{cor.cont}\
Let $L \in \bL_\gamma$ and $s,r\in \RR$ be arbitrary. We then have
that the map
\begin{equation*}
    (0, \infty) \ni t \to e^{tL} \in \cB(W^{s, p}_a,W^{r, p}_a)
\end{equation*}
is infinitely many times differentiable.
\end{corollary}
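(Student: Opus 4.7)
The strategy is to exploit the semigroup identity $e^{(t_0+h)L} = e^{hL}\,e^{t_0L}$ together with the arbitrary smoothing of $e^{t_0L}$ given by Proposition \ref{prop.mapgeneral}: for $t_0>0$ the factor $e^{t_0L}$ can absorb any loss of derivatives produced by the differentiation in $t$.

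First I would record a quadratic Taylor remainder for the semigroup. For $f\in W^{s+2,p}_{a}$, the identity
\begin{equation*}
e^{hL}f - f \;=\; \int_0^h L e^{\tau L} f\,d\tau
\end{equation*}
holds in $W^{s,p}_{a}$ (it is just the strong version of $\frac{d}{dt}e^{tL}f=Le^{tL}f$ on $\cD(L)$, established in Proposition \ref{prop.st2}). Subtracting $hLf$ and commuting $L$ past the semigroup gives
\begin{equation*}
e^{hL}f - f - hLf \;=\; \int_0^h (e^{\tau L}-1)\,Lf\,d\tau,
\end{equation*}
so Lemma \ref{lemma.cont} applied to $Lf \in W^{s+2,p}_{a}$ yields the operator-norm estimate
\begin{equation*}
\bigl\|e^{hL}-1-hL\bigr\|_{W^{s+4,p}_{a}\to W^{s,p}_{a}} \;\le\; C h^{2}, \qquad h \in (0,1].
\end{equation*}

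Second, I would combine this with the difference quotient identity at $t_0>0$,
\begin{equation*}
\frac{e^{(t_0+h)L}-e^{t_0L}}{h} - Le^{t_0L} \;=\; \frac{e^{hL}-1-hL}{h}\; e^{t_0L}.
\end{equation*}
By Proposition \ref{prop.mapgeneral}, $e^{t_0L}$ is bounded from $W^{s,p}_{a}$ into $W^{r+4,p}_{a}$, with norm controlled by $t_0$. The previous $O(h^{2})$ bound then gives convergence to $0$ in $\cB(W^{s,p}_{a},W^{r,p}_{a})$ as $h\to 0$, proving that $t\mapsto e^{tL}$ is norm-differentiable on $(0,\infty)$ with derivative $Le^{tL}$.

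Third, I would iterate. Proposition \ref{prop.mapgeneral} shows $L^{k}e^{tL}\in\cB(W^{s,p}_{a},W^{r,p}_{a})$ for every $k,r,s$, since $L^{k}$ loses at most $2k$ derivatives and $e^{tL}$ supplies them for $t>0$. Applying the first-derivative argument to $L^{k}e^{tL} = L^{k}e^{hL}e^{(t_{0}-\eta)L}$ (for suitable splitting) and using the inductive hypothesis, an induction on $k$ yields $\frac{d^{k}}{dt^{k}}e^{tL}=L^{k}e^{tL}$ for every $k\ge 0$, with the $k$th derivative bounded in $\cB(W^{s,p}_{a},W^{r,p}_{a})$. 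The only care required is bookkeeping of the regularity indices through the induction; there is no substantive obstacle, because the semigroup's arbitrary smoothing at positive times makes every $L^{k}e^{tL}$ bounded between any two weighted Sobolev spaces.
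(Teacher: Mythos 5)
Your argument is correct, and it reaches the conclusion by a somewhat different mechanism than the paper, although both rest on the same two ingredients: the short-time bound of Lemma \ref{lemma.cont} and the arbitrary smoothing of $e^{tL}$ at positive times from Proposition \ref{prop.mapgeneral}. The paper's proof is a terser reduction: it takes the derivative formula $\pa_t^k e^{tL} = e^{tL}L^k$ for granted and only verifies that $t \mapsto e^{tL}$ is \emph{norm continuous} across a regularity gap, by factoring $e^{tL} = e^{(t-\delta)L}e^{\delta L}$, letting $e^{\delta L}$ absorb the loss of $2k$ derivatives caused by $L^k$, and invoking Lemma \ref{lemma.cont} for the remaining factor. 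You instead prove norm differentiability from scratch: the quadratic Taylor bound $\|e^{hL}-1-hL\|_{W^{s+4,p}_a \to W^{s,p}_a} \le Ch^2$ (obtained from $e^{hL}f - f - hLf = \int_0^h (e^{\tau L}-1)Lf\,d\tau$ and Lemma \ref{lemma.cont}), combined with the difference-quotient identity and the smoothing factor $e^{t_0L}$, shows directly that the operator-norm derivative exists and equals $Le^{tL}$, and the induction on $k$ proceeds the same way since $L^k$ commutes with the semigroup and $e^{t_0L}$ supplies whatever regularity $L^k$ consumes. What your route buys is a self-contained, quantitative justification of the derivative identity in the operator norm, which the paper only asserts; what the paper's route buys is brevity. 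Two routine points you should tidy up: the identity $\frac{e^{(t_0+h)L}-e^{t_0L}}{h} - Le^{t_0L} = \frac{e^{hL}-1-hL}{h}\,e^{t_0L}$ only makes sense for $h>0$ (the semigroup is not defined for negative times), so for left difference quotients you should either write $e^{t_0L} - e^{(t_0-k)L} = (e^{kL}-1)e^{(t_0-k)L}$ and argue symmetrically, or invoke the standard fact that a norm-continuous map with continuous right derivative is $C^1$; and the inductive bookkeeping of regularity indices, which you wave at, does go through exactly because $\|L^k e^{t_0L}\|_{W^{s,p}_a \to W^{r+4,p}_a}$ is finite for every choice of indices when $t_0>0$.
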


\begin{proof} We have $\pa_t^k e^{tL} = e^{tL} L^k$, so it is enough
to show that the map $(0, \infty) \ni t \to e^{tL} \in \cB(W^{s-k,
p}_a,W^{r, p}_a)$ is continuous. Now, for each $\delta>0$, let $t \ge
\delta > 0$. Then $e^{\delta L}$ maps $W^{s-k, p}_a$ to $W^{r+2, p}_a$
continuously, by Proposition \ref{prop.mapgeneral}.  Writing $e^{tL} =
e^{(t-\delta)L} e^{\delta L}$ and using the continuity of $[\delta,
\infty) \ni t \to e^{(t-\delta)L} \in \cB(W^{r+2, p}_a, W^{r, p}_a)$,
by Lemma \ref{lemma.cont}, we obtain the result.
\end{proof}

See \cite{Carmona, Hundertmark, Simon} for more continuity properties of the semigroups
generated by second order differential operators (Schr\"odinger semigroups).

Let us notice for further reference that for constant coefficient operators, the Green's
function can be determined explicitly.

\begin{remark} \label{rem.explicit.Kernel}
If $L$ \eqref{eq.L} is a constant coefficient operator
\begin{align}
  L^0 = \sum_{i,j=1}^n a^0_{ij} \D_i \D_j + \sum_{k=1}^n b^0_k\D_k +
  c^0
\end{align}
and $A^0:= (a^0_{ij})$ is the matrix of highest order coefficients,
assumed to satisfy $a^0_{ij} = a^0_{ji}$, we have the explicit formula
\begin{equation} \label{eq.explicit.Kernel}
  \cG^{L^0}_t(x,y) = e^{tL^0}(x, y)=\frac{e^{c^0t}}{\sqrt{(4\pi t)^n
  \det(A^0)}} e^{\frac{(x+b^0t -y)^t (A^0)^{-1} (x +b^0t
  -y)}{4t}}.
\end{equation}
\end{remark}

\subsection{Perturbative expansion} \label{ssec.perturbative}
The purpose of this section is to obtain a time-ordered perturbative
expansion of $e^{tL}$, $L\in \bL_\gamma$, in terms of $e^{tL_0}$ for a
fixed element $L_0\in \bL_\gamma$. Later, $L_0$ will be obtained by
freezing the highest-order coefficients of $L$ at a given point $z$
and dropping the lower-order terms. This expansion is the well-known
Dyson series \cite{JaffeBook, JoachainBook, KatoBook}.  Here, we concentrate on
justifying this expansion in our setting and in obtaining global error
estimates in weighted Sobolev spaces.

For each $k\in \ZZ_+$, we denote by
\begin{multline*}
    \Sigma_k :=
    \{\tau = (\tau_0, \tau_1, \ldots, \tau_{k}) \in \RR^{k+1},\
    \tau_j \ge 0, \sum \tau_j=1\} \\ \simeq
    \{\s=(\s_1, \ldots, \s_k) \in \RR^{k},\
    1 \ge \s_1 \ge \s_2 \ge \ldots \s_{k-1} \ge \s_{k} \ge 0\}
\end{multline*}
the {\em standard unit simplex} of dimension $k$. The identification
above is given by $\s_j = \tau_j+\tau_{j+1}+\ldots+\tau_{k}$.  Using
this bijection, for any operator-valued function $f$ of $\RR^N$ we can
write
\begin{multline*}
    \int_{\Sigma_k}\!\! f(\tau)d\tau
    = \int_{0}^{1}\!\!\int_{0}^{\s_1}\!\!\! \ldots \!\! \int_{0}^{\s_{k-1}}\!\!
    f(1-\s_1, \s_1- \s_2, \ldots, \s_{k-1} - \s_{k}, \s_k)d\s_{k} \ldots d\s_1\\
    = \int_{\Sigma_k} \!\!
    f(1-\s_1, \s_1- \s_2, \ldots, \s_{k-1} - \s_{k}, \s_k)d\s
\end{multline*}

We recall that, if $g :[a,b] \to X$ is a {\em continuous} function to
a Banach space $X$, $\int_a^b g(t)dt$ is defined as a Riemann
integral.  We begin with a preliminary lemma.  We further recall that
$\cB(X, Y)$ the Banach space of continuous, linear maps between two
Banach spaces $X$ and $Y$.

\begin{lemma}\label{lemma.perturbative}
Let $L_j \in \bL_\gamma$ and let $V_j$ be such that
$e^{-b_j\<x\>} V_j\in \bL$,
$j=1,\ldots,k$, for some $b=(b_1,\ldots,b_k)\in
\RR_+^k$, $k \in \ZZ_+$. Then
\begin{equation*}
    \Phi(\tau) = e^{\tau_0L_0}V_1 e^{\tau_1L_1} \dots e^{\tau_{k-1} L_{k-1}}
    V_k e^{\tau_kL_k}, \qquad \tau \in \Sigma_k
\end{equation*}
defines a continuous function
$\Phi : \Sigma_k \to \cB(W^{s, p}_a(\RR^N), W^{r,p}_ {a- |b|}(\RR^N))$
for any $a \in \RR$  and $1<p<\infty$.
\end{lemma}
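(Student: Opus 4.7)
The plan is to reduce the problem to the mapping properties of each factor separately, then prove continuity by a telescoping decomposition exploiting the norm-continuity of $t\mapsto e^{tL_j}$ between Sobolev spaces that leave a small regularity gap.

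First, I would record the mapping property of each $V_j$. By hypothesis $V_j = e^{b_j\<x\>}W_j$ with $W_j \in \bL$. Conjugation by $e^{c\<x\>}$ maps $\bL$ into itself (since $\partial_i\<x\> = x_i/\<x\>$ is bounded, so the conjugated operator is again a second-order differential operator with $C^\infty_b$ coefficients, with coefficients uniformly bounded when $c$ varies in a bounded set); hence $W_j: W^{r,p}_c \to W^{r-2,p}_c$ is continuous with norm uniform for $c$ in bounded sets. Since multiplication by $e^{b_j\<x\>}$ is an isometry $W^{r-2,p}_c \to W^{r-2,p}_{c-b_j}$, composition gives $V_j \in \cB(W^{r,p}_c, W^{r-2,p}_{c-b_j})$.

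Next, for each $\tau\in\Sigma_k$ I would verify $\Phi(\tau) \in \cB(W^{s,p}_a, W^{r,p}_{a-|b|})$ by reading $\Phi(\tau)$ right-to-left on $f\in W^{s,p}_a$: each $V_j$ costs two derivatives and shifts the weight by $-b_j$, while each $e^{\tau_j L_j}$ is either the identity (if $\tau_j=0$) or smooths arbitrarily at cost $\tau_j^{-\alpha/2}$ by Proposition \ref{prop.mapgeneral}. Since $\sum\tau_j=1$, at least one factor satisfies $\tau_{j_0}\ge 1/(k+1)$, so the required total smoothing $r - s + 2k$ can always be distributed across the positive-$\tau$ factors and the composition closes into $W^{r,p}_{a-|b|}$.

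For continuity on $\Sigma_k$, I would use the telescoping identity
\[
\Phi(\tau) - \Phi(\tau') = \sum_{j=0}^k A_j(\tau,\tau')\big(e^{\tau_j L_j} - e^{\tau'_j L_j}\big) B_j(\tau,\tau'),
\]
where $A_j, B_j$ are products of the $V_m$'s with factors $e^{\tau_m L_m}$ or $e^{\tau'_m L_m}$. Combining the H\"older-type bound $\|e^{tL_j} - e^{t'L_j}\|_{W^{s+\delta,p}_a \to W^{s,p}_a} \le C|t-t'|^{\delta/2}$ for small $\delta>0$ (remark after Lemma \ref{lemma.cont}) with uniform bounds on $A_j, B_j$ between Sobolev spaces chosen with matching regularity slack $\delta$, one concludes $\|\Phi(\tau)-\Phi(\tau')\|_{W^{s,p}_a\to W^{r,p}_{a-|b|}} \to 0$ as $\tau\to\tau'$. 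The main obstacle is at points of $\partial\Sigma_k$ where some $\tau_{j_0}=0$: there $t\mapsto e^{tL_{j_0}}$ is only strongly continuous at $0$ in $\cB(W^{s,p}_a)$, not norm continuous. This forces one to keep a regularity slack $\delta>0$ in every link of the telescoped product and, simultaneously, to route the smoothing load through factors with $\tau_m$ bounded away from $0$ so that the uniform estimate persists up to the boundary.
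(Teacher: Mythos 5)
Your proposal is correct and follows essentially the same route as the paper: the paper likewise covers $\Sigma_k$ by the sets $\{\tau_j > 1/(k+2)\}$, lets the one exponential factor whose $\tau_j$ is bounded away from zero do all the smoothing (Corollary \ref{cor.cont}), and handles the remaining factors $V_j e^{\tau_j L_j}$ via the norm-continuity-with-loss of Lemma \ref{lemma.cont} together with the weighted mapping property of $V_j$. The only cosmetic difference is that you telescope the difference $\Phi(\tau)-\Phi(\tau')$ explicitly and invoke the $\delta$-H\"older remark, whereas the paper simply composes continuous operator-valued maps, accepting a fixed loss of four derivatives per factor which the large-$\tau_j$ factor then recovers.
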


Above  we use the standard multi index notation $\, |b|=\sum_{j=1}^k b_j$.

\begin{proof}
It is enough to prove that $\Phi$ is continuous on each of the sets $\cV_j :=\{
\tau_j > 1/(k+2)\}$, $j=0,\dots,k$, since they cover $\Sigma_k$.
Let us assume that $j=0$, for the simplicity of notation.

By assumption and by Lemma \ref{lemma.cont}, each of the
functions
\begin{equation*}
    [0, \infty) \ni \tau_j \to V_j e^{\tau_jL_j} \in
    \cB(W^{r_j+4, p}_{c_j},W^{r_j, p}_{c_j-b_j}), \quad 1 \le j \le k,
\end{equation*}
is continuous. For a suitable choice of $c_j$ and $r_j$ (more precisely,
$c_j = c_{j+1} -b_{j+1}$, $c_k = a$, $r_j = r_{j+1} - 4$, $r_k = s$), we
obtain that the map
\begin{equation*}
    [0, \infty)^k \ni (\tau_j)=: \tau' \to \Psi(\tau') :=
    V_1 e^{\tau_1L_1} ... V_k e^{\tau_kL_k}\in
    \cB(W^{s, p}_{a},W^{s-4k, p}_{a-|b|})
\end{equation*}
is continuous.

Corollary \ref{cor.cont} gives that the map $\tau_0 \to e^{\tau_0L_0}
\in \cB(W^{s-4k, p}_{a-|b|},W^{r, p}_{a-|b|})$ is continuous for
$\tau_0 \ge 1/(k+2)$. This proves the continuity of $\Phi$ on
$\cV_0$ and completes the proof of the lemma.
\end{proof}

By iterating Duhamel's formula in Lemma \ref{lemma.Duhamel},
we obtain a time-ordered expansion of $e^{tL}$.

\begin{proposition} \label{prop.perturbative}
Let $d\in \ZZ_+$. Then, for each $L,L_0\in \bL_\gamma$,
\begin{align}\label{eq.perturbative}
\begin{split}
&e^{tL} =  \, e^{tL_0} + t\int_{\Sigma_1} e^{t \tau_0L_0}Ve^{ t\tau_1L_0} d\tau
\\
& + t^2 \int_{\Sigma_2} e^{ t \tau_0L_0}Ve^{t \tau_1L_0}Ve^{ t \tau_2L_0} d\tau
 +  \dots +  \\
& +  t^d \int_{\Sigma_{p}}e^{t \tau_0L_0}Ve^{t \tau_1L_0} \dots
e^{t \tau_{d-1}L_0}Ve^{t\tau_{d} L_0}d\tau \\
 &+ t^{d+1} \int_{\Sigma_{d+1}}e^{t\tau_0L_0}Ve^{t\tau_1L_0} \dots
e^{t\tau_{d}L_0}Ve^{\tau_{d+1} L}d\tau,
\end{split}
\end{align}
where $V=L-L_0$, and each integral is a well-defined Riemann integral
of a Banach valued function.
\end{proposition}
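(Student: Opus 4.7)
The plan is to iterate Duhamel's formula from Lemma~\ref{lemma.Duhamel} exactly $d+1$ times, and then re-parametrize the resulting nested time integrals as integrals over the standard simplex $\Sigma_k$. Well-definedness of each term will follow from Lemma~\ref{lemma.perturbative}.

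Concretely, I would first recall from Lemma~\ref{lemma.Duhamel} that, with $V := L - L_0$,
\begin{equation*}
    e^{tL} = e^{tL_0} + \int_0^t e^{(t-\tau)L_0}\, V\, e^{\tau L}\, d\tau,
\end{equation*}
the integral being interpreted as a Riemann integral of a continuous $\cB(L^p, L^p)$-valued function (this continuity is a special case of Lemma~\ref{lemma.perturbative} with the appropriate choice of spaces). The next step is to substitute this identity into itself for the occurrence of $e^{\tau L}$ on the right hand side; iterating $d$ more times produces the telescoping expansion
\begin{equation*}
    e^{tL} = \sum_{k=0}^{d} I_k(t) + R_{d+1}(t),
\end{equation*}
where
\begin{equation*}
    I_k(t) = \int_0^t\!\!\int_0^{\sigma_1}\!\!\!\cdots\!\!\int_0^{\sigma_{k-1}} e^{(t-\sigma_1)L_0} V e^{(\sigma_1-\sigma_2)L_0} V \cdots V e^{\sigma_k L_0}\, d\sigma_k \cdots d\sigma_1
\end{equation*}
and $R_{d+1}(t)$ has the same form except that the final factor $e^{\sigma_{d+1}L_0}$ is replaced by $e^{\sigma_{d+1} L}$.

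The third step is the change of variables $\sigma_j = t s_j$, which produces a factor $t^k$ and maps the ordered region $\{1 \ge s_1 \ge \cdots \ge s_k \ge 0\}$ onto the simplex $\Sigma_k$ via the identification $\tau_0 = 1-s_1$, $\tau_j = s_j - s_{j+1}$, $\tau_k = s_k$ recalled in the paper before the lemma. This converts $I_k(t)$ into
\begin{equation*}
    t^k \int_{\Sigma_k} e^{t\tau_0 L_0} V e^{t\tau_1 L_0} \cdots V e^{t\tau_k L_0}\, d\tau,
\end{equation*}
and $R_{d+1}(t)$ into the corresponding integral with $e^{t\tau_{d+1} L}$ at the end, matching the statement of the proposition.

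Finally, I would justify at each step that the integrals are bona fide Riemann integrals of continuous Banach-valued functions by applying Lemma~\ref{lemma.perturbative}: since $V = L - L_0 \in \bL$ is of order at most two and bounded (so the $b_j$'s may be taken to be $0$), the integrands $\tau \mapsto e^{t\tau_0 L_0} V e^{t\tau_1 L_0} \cdots V e^{t\tau_k L_0}$ are continuous on $\Sigma_k$ with values in $\cB(W^{s+2k,p}_a, W^{s,p}_a)$ for any $s,a$; the same holds for $R_{d+1}(t)$, replacing the last $L_0$ by $L$ and invoking the semigroup estimate from Proposition~\ref{prop.st2}. The main bookkeeping obstacle is only the index-matching in the change of variables; the analytic content is entirely contained in Lemmas~\ref{lemma.Duhamel} and \ref{lemma.perturbative}, which have already been established.
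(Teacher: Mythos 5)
Your proposal is correct and follows essentially the same route as the paper: iterate Duhamel's formula from Lemma~\ref{lemma.Duhamel}, rescale the nested time integrals onto the simplices $\Sigma_k$ via the identification $\tau_0=1-s_1$, $\tau_j=s_j-s_{j+1}$, $\tau_k=s_k$, and invoke Lemma~\ref{lemma.perturbative} (with the $b_j$'s equal to zero) to make each term a well-defined Riemann integral of a continuous Banach-valued function. The only quibble is your derivative count $\cB(W^{s+2k,p}_a,W^{s,p}_a)$ for the integrand, which is slightly optimistic, but this is immaterial since Lemma~\ref{lemma.perturbative} itself supplies continuity into $\cB(W^{s,p}_a,W^{r,p}_a)$ for arbitrary $r$.
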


The positive integer $d$ will be called the {\em iteration level} of
the approximation.  As $d\to\infty$, formula \eqref{eq.perturbative}
above gives rise to an asymptotic series ({\em Dyson series}, see
\cite{JaffeBook, JoachainBook, KatoBook} and the references therein).

Later in the paper, $V$ will be replaced by a Taylor approximation of
$L$, so that $V$ will have polynomial coefficients in $x$, so we have
included this case in the lemma above.

\begin{proof}
Recall that Lemma \ref{lemma.Duhamel} gives
\begin{equation*}
  e^{t L} - e^{t L_0} =\int_0^t e^{(1-\zeta) L_0} V e^{\zeta L}d\zeta
  =\int_0^1 e^{t(1-\tau)L_0} V e^{ t \tau L} t d\tau.
\end{equation*}
with the substitution $\zeta=t \tau$. This is in fact our result for $k=1$.

The result for any $p$ then follows by induction using the above
formula.

Recall that on each simplex $\Sigma_p$, we denoted $\sigma_k = \tau_k
+ \tau_{k+1} + \ldots + \tau_{p}$.  Explicitly, for $t=1$ we have
\begin{multline*}
e^{L} = e^{L_0} + \int_{\Sigma_1} e^{(1-\s_1)L_0}Ve^{\s_1L_0} d\s+
\int_{\Sigma_2} e^{(1-\s_1)L_0}Ve^{(\s_1-\s_2)L_0}Ve^{\s_2L_0} d\s \\
+ \dots + \int_{\Sigma_{d-1}}e^{(1-\s_1)L_0}V \dots
V e^{(\s_{d-2}-\s_{d-1})L_0}Ve^{\s_{d-1}L}d\s\\ = e^{L_0} +
\int_{\Sigma_1} e^{(1-\s_1)L_0}Ve^{\s_1L_0} d\s+ \int_{\Sigma_2}
e^{(1-\s_1)L_0}Ve^{(\s_1-\s_2)L_0}Ve^{\s_2L_0} d\s \\ + \dots +
\int_{\Sigma_{d-1}}e^{(1-\s_1)L_0}V \dots V
e^{(\s_{d-2}-\s_{d-1})L_0}Ve^{\s_{d-1}L_0}d\s + \dots \\
+\int_{\Sigma_{d-1}}\int_{0}^{\s_{d-1}} e^{(1-\s_1)L_0}V \dots
e^{(\s_{d-2}-\s_{d-1})L_0}Ve^{(\s_{d-1} - \s_d) L_0} V e^{\s_d L}d\s
d\s_n \\ = e^{L_0} + \int_{\Sigma_1} e^{(1-\s_1)L_0}Ve^{\s_1L_0} d\s+
\int_{\Sigma_2} e^{(1-\s_1)L_0}Ve^{(\s_1-\s_2)L_0}Ve^{\s_2L_0} d\s \\
+ \dots + \int_{\Sigma_d}e^{(1-\s_1)L_0}Ve^{(\s_1-\s_2)L_0} \dots
e^{(\s_{d-1}-\s_d)L_0}Ve^{\s_d L}d\s,
\end{multline*}
where each integral is well defined as a Riemann integral by the Lemma
\ref{lemma.perturbative}.
\end{proof}

\section{Local dilations and perturbative expansions}
\label{sec.dilations}

In this section, we tackle the task of deriving an algorithmically
computable approximation to $e^{tL}$. We exploit the perturbative
expansion \eqref{eq.perturbative} with $L_0$ the operator obtained by
freezing the highest-order coefficents of $L$ at a given, but
arbitrary, point $z\in \RR^N$, and dropping the lower-order terms (see
\eqref{eq.L_0} below).  Then, we approximate $L-L_0$ by an appropriate
Taylor expansion, so that each of the terms in \eqref{eq.perturbative}
except the last one can be explicitly computed using commutator
formulas, as discussed in Section \ref{sec.commutator}.  Recall that
the sets of second order differential operators $\bL_\gamma \subset
\bL$ were introduced in Definition \ref{def.Lgamma}.

First, using a suitable rescaling in space and time, we replace the
problem of determining an asymptotic expansion of the kernel
\begin{equation*}
\cG_t^{L}(x, y) := e^{tL}(x, y)
\end{equation*}
of $e^{tL}$ by the problem of determining an asymptotic expansion of
the kernel $\cG_1^{L^{s,z}}(x, y) = e^{L^{s,z}}(x, y)$ of
$e^{L^{s,z}}$ for a suitable family of operators $L^{s,z}$
parameterized by $s = \sqrt{t}$, and by the point $z \in \RR^N$. The
point $z$ is fixed throughout this section, but it will be allowed to
vary later on as a function of $x$ and $y$ satisfying some conditions,
for example $z=(x+y)/2$. For some results, we will set $z=x$.  The
family $L^{s,z}$ has limit precisely $L_0$ as $s \to 0$. Since we will
let $z$ vary later, we shall sometimes write $L_0=L_0^{z}$.

For any $s>0$, we consider the action on functions of dilating $x$ by
$s$ about $z$ and $t$ by $s^2$ about $0$. If $f : \RR^N \to \RR$,
$u:[0,\infty)\times \RR^N\to \RR$, we then set
\begin{align}\label{eq.dilation}
    f^{s,z}(x) := f(z + s(x-z)),\\u^{s,z}(t,x) := u(s^2t, z + s(x-z)),
\end{align}
and,
\begin{equation} \label{eq.L^sdef}
    L^{s,z} := \sum_{i,j=1}^N a^{s,z}_{ij}(x) \D_i \D_j + s \sum_{i=1}^N
b^{s,z}_i(x) \D_i + s^2 c^{s,z}(x).
\end{equation}
We immediately see that
\begin{align}\label{eq.L^s}
L^{s,z} u^{s,z} = s^2(Lu)^{s,z}, \;\;\; (\D_t -  L^{s,z})u^{s,z} = s^2[(\D_t - L)u]^{s,z}
\end{align}
In particular, we have the following simple lemma, which we record for
further reference.

\begin{lemma}\label{lemma.dilation}
If $u$ solves \eqref{eq.inhomogeneous-IVP}, then $u^{s,z}$ solves
\begin{equation}\label{eq.dilation-inhomogeneus-IVP}
\begin{cases}
  \D_t u^{s,z} - L^{s,z} u^{s,z} = 0 & \hspace{1.0cm} \text{ in }\;
  (0,\infty)\times \bR^N\\ u^{s,z} =f^{s,z} \in
  \cC_c^\infty(\bR^N)& \hspace{1.0cm} \text{ on }\; \{0\}\times \bR^N.
\end{cases}
\end{equation}
\end{lemma}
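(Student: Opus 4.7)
The plan is to verify Lemma \ref{lemma.dilation} by a direct chain-rule computation that establishes both identities in \eqref{eq.L^s}; then the lemma follows immediately by reading off the equation satisfied by $u^{s,z}$ when $u$ satisfies \eqref{eq.inhomogeneous-IVP} with $g=0$, and by checking the initial data.

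First I would compute how derivatives scale. Writing $y = z + s(x-z)$, the change of variables gives
\[
    \partial_t u^{s,z}(t,x) = s^2 (\partial_t u)(s^2 t, y), \qquad
    \partial_i u^{s,z}(t,x) = s (\partial_i u)(s^2 t, y),
\]
and hence $\partial_i \partial_j u^{s,z}(t,x) = s^2 (\partial_i \partial_j u)(s^2 t, y)$. In our shorthand $v^{s,z}(t,x) = v(s^2 t, y)$ this reads $\partial_t u^{s,z} = s^2 (\partial_t u)^{s,z}$, $\partial_i u^{s,z} = s (\partial_i u)^{s,z}$, $\partial_i\partial_j u^{s,z} = s^2 (\partial_i\partial_j u)^{s,z}$.

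Next I would plug these into the definition \eqref{eq.L^sdef} of $L^{s,z}$. The coefficients satisfy $a^{s,z}_{ij}(x) = a_{ij}(y)$ and similarly for $b_i^{s,z}$ and $c^{s,z}$, so that for each term
\[
    a^{s,z}_{ij}(x)\, \partial_i\partial_j u^{s,z}(t,x) = s^2\, a_{ij}(y) (\partial_i\partial_j u)(s^2 t, y) = s^2\,(a_{ij}\partial_i\partial_j u)^{s,z}(t,x),
\]
\[
    s\, b^{s,z}_i(x)\, \partial_i u^{s,z}(t,x) = s^2\, b_i(y)(\partial_i u)(s^2t,y) = s^2\,(b_i \partial_i u)^{s,z}(t,x),
\]
\[
    s^2\, c^{s,z}(x)\, u^{s,z}(t,x) = s^2\,(cu)^{s,z}(t,x).
\]
Summing over $i,j$ yields the first identity $L^{s,z} u^{s,z} = s^2 (Lu)^{s,z}$ in \eqref{eq.L^s}. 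Subtracting this from $\partial_t u^{s,z} = s^2(\partial_t u)^{s,z}$ gives the second identity.

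Finally I would conclude: if $u$ solves \eqref{eq.inhomogeneous-IVP} with $g = 0$, then $(\partial_t - L)u \equiv 0$, whence $(\partial_t - L^{s,z}) u^{s,z} = s^2[(\partial_t - L)u]^{s,z} \equiv 0$ on $(0,\infty)\times\RR^N$. The initial condition follows from $u^{s,z}(0,x) = u(0, z + s(x-z)) = f(z+s(x-z)) = f^{s,z}(x)$, and if $f \in \mathcal C_c^\infty(\RR^N)$ then clearly $f^{s,z} \in \mathcal C_c^\infty(\RR^N)$ as well (for $s > 0$). There is no real obstacle here; the only thing to take care of is tracking the powers of $s$, since the mismatch in scaling between first and second derivatives is exactly compensated by the factor $s$ in front of the drift term and $s^2$ in front of the zeroth order term in the definition of $L^{s,z}$, which is the whole point of the parabolic rescaling.
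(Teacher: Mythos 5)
Your proposal is correct and is essentially the argument the paper intends: the paper records the lemma without a separate proof, stating the scaling identities \eqref{eq.L^s} as immediate, and your chain-rule computation is exactly the verification of those identities, followed by the obvious check of the initial data. Your reading of \eqref{eq.inhomogeneous-IVP} as the homogeneous problem for $L$ (i.e.\ $g=0$) is the intended one, so there is no gap.
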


\subsection{Dilations and Green's functions}
\label{subsec.dilations}
We want to study the Initial Value Problem
\eqref{eq.dilation-inhomogeneus-IVP} and the Green's function of its
associated solution operator $e^{t L^{s,z}}$.  We can reduce to study
the special case $z=0$.

The definition of the Green's function and Lemma \ref{lemma.dilation}
then gives
\begin{align}
\begin{split}
  u^{s,0}(t,x) &= \int_{\bR^N} \cG_t^{L^{s,0}}(x,y) f^{s,z}(y)dy =
  \int_{\bR^N} \cG_t^{ L^{s,0}}(x,y)f(sy)dy \\ & = s^{-N} \int_{\bR^N}
  \cG_t^{L^{s,0}}(x,\frac{y}{s}) f(y) dy.
\end{split}
\end{align}
On the other hand,
\begin{equation*}
  u^{s,0}(t,x) = u(s^2t,sx) = \int_{\bR^N}\cG_{s^2t}^{L}(sx,y) f(y)dy,
\end{equation*}
which implies
\begin{equation*}
\cG_t^{L^{s,0}}(x,\frac{y}{s}) = s^N \cG_{s^2t}^{L}(sx,y) \
\Leftrightarrow\ \cG_t^{L^{s,0}}(x,y) = s^N \cG_{s^2t}^{L}(sx,sy).
\end{equation*}
In other words
\begin{equation*}
\cG_t^{ L}(x,y) = s^{-N} \cG^{L^{s,0}}_{s^{-2}t}(s^{-1}x, s^{-1}y)
\end{equation*}

If we now translate to $z\ne 0$ and choose $s=\sqrt{t}$, we obtain the
desired correspondence between $\cG^L_t$ and $\cG^{L^{s,z}}_1$, which
we also record for further reference.

\begin{lemma}\label{lemma.Green.funct.cor}
Assume $L \in \bL$ and let $z$ be a fixed, but arbitrary, point in
$\RR^N$. Then, for any $s>0$,
\begin{equation*}
\begin{aligned}
  \cG_t^{L}(x,y) &= s^{-N} \cG^{L^{s,z}}_{1}(z + s^{-1}(x - z), z +
  s^{-1}(y-z))\\ &=t^{-\frac{N}{2}}\cG_1^{L^{\sqrt{t}, z }}(z +
  t^{-\frac{1}{2}}(x - z), z + t^{-\frac{1}{2}}(y-z)), \text{ if } s =
  t^{-\frac{1}{2}}.
\end{aligned}
\end{equation*}
\end{lemma}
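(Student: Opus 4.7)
The plan is to derive the identity by applying Lemma \ref{lemma.dilation} to express $u^{s,z}(1,x)$ in two different ways using Green's functions, and then equate the resulting integral kernels. The calculation for $z=0$ is already carried out in the text just before the lemma; for general $z$ one can either reduce to that case by translating coordinates by $-z$, or redo the computation directly using the $z$-centered dilation. I would take the second route, since Lemma \ref{lemma.dilation} already handles arbitrary $z$.

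First I would fix $f \in \cC_c^\infty(\RR^N)$ and let $u(t,x) = e^{tL}f(x)$ denote the classical solution of the homogeneous version of \eqref{eq.inhomogeneous-IVP}. A direct check using the chain rule shows that $L^{s,z} \in \bL_\gamma$ with the same ellipticity constant $\gamma$, and that its lower-order coefficients $s\, b^{s,z}_i$ and $s^2 c^{s,z}$ lie in $\cC^\infty_b(\RR^N)$, so that Proposition \ref{prop.mapgeneral} applies to $L^{s,z}$ and produces a smooth Green's function $\cG_1^{L^{s,z}}$. By Lemma \ref{lemma.dilation}, the rescaled function $u^{s,z}$ solves \eqref{eq.dilation-inhomogeneus-IVP} with initial datum $f^{s,z} \in \cC_c^\infty(\RR^N)$, hence by the Green's function representation at time $t=1$,
\[
   u^{s,z}(1,x) \;=\; \int_{\RR^N} \cG_1^{L^{s,z}}(x,w)\, f^{s,z}(w)\, dw.
\]

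Next I would perform the substitution $w = z + s^{-1}(y-z)$ on the right, which gives $f^{s,z}(w) = f(y)$ and $dw = s^{-N}\,dy$, so that
\[
   u^{s,z}(1,x) \;=\; s^{-N}\!\int_{\RR^N} \cG_1^{L^{s,z}}\bigl(x,\, z + s^{-1}(y-z)\bigr)\, f(y)\, dy.
\]
On the other hand, by the very definition of $u^{s,z}$ and the Green's function representation of $e^{s^2 L}$,
\[
   u^{s,z}(1,x) \;=\; u(s^2,\, z + s(x-z)) \;=\; \int_{\RR^N} \cG_{s^2}^L\bigl(z + s(x-z),\, y\bigr)\, f(y)\, dy.
\]
Since these integrals agree for every $f \in \cC_c^\infty(\RR^N)$ and both kernels are smooth in the spatial variables (Corollary \ref{cor.st2}(ii) combined with Proposition \ref{prop.mapgeneral}), the kernels must coincide pointwise:
\[
   \cG_{s^2}^L\bigl(z + s(x-z),\, y\bigr) \;=\; s^{-N}\,\cG_1^{L^{s,z}}\bigl(x,\, z + s^{-1}(y-z)\bigr).
\]

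Finally I would substitute $X := z + s(x-z)$ (equivalently $x = z + s^{-1}(X-z)$) to recast this as
\[
   \cG_{s^2}^L(X,y) \;=\; s^{-N}\,\cG_1^{L^{s,z}}\bigl(z + s^{-1}(X-z),\, z + s^{-1}(y-z)\bigr),
\]
which is the first equality of the lemma (with the implicit relation $t=s^2$), and specializing to $s = \sqrt{t}$ yields the second line. There is no real obstacle: the argument is a combination of the scaling identity of Lemma \ref{lemma.dilation}, a single Jacobian change of variables, and the fact that smooth heat kernels are uniquely determined by their action on test functions. The only ingredient to check carefully is that $L^{s,z}$ remains uniformly strongly elliptic with constant $\gamma$ (independent of $s$ and $z$), which is immediate from the definition of $L^{s,z}$ and the fact that dilation and translation preserve positive definiteness of $[a_{ij}]$.
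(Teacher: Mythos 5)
Your proposal is correct and follows essentially the same route as the paper: apply Lemma \ref{lemma.dilation}, write the rescaled solution $u^{s,z}$ in two ways via the Green's functions of $L^{s,z}$ and of $L$, change variables, and equate the smooth kernels. The only (harmless) difference is that you carry out the computation directly for general $z$ at times $1$ and $s^2$, whereas the paper first does the scaling at $z=0$ for general $t$ and then translates; your remark that the first displayed equality implicitly carries the relation $t=s^2$ is also the right reading of the statement.
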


\subsection{Perturbative expansion of $e^{L^{s,z}}$}
Since Lemma \ref{lemma.Green.funct.cor} gives us an
immediate procedure for obtaining the Green function $\cG_t^{L}(x,y)$
of $\partial_t - L$ from the Green's function $\cG_t^{L^{s,z}}(x,y)$
of $\partial_t - L^{s,z}$, we now concentrate on obtaining a
perturbative expansion for the latter.

Recall that $L_0^{z} = L^{0, z} = \lim_{s \searrow 0} L^{s, z}$.
Let us write $V_1^{s,z} := L^{s,z} - L_0^{z}$. Then, $V_1^{s,z}$
takes the role of $V$ in the perturbative expansion
\eqref{eq.perturbative} for the operator $e^{L^{s,z}}$, that is:
\begin{align}\label{eq.perturbative1}
\begin{split}
&e^{L^{s,z}} = \, e^{L_0^{z}} + \int_{\Sigma_1}
e^{\tau_0L_0^{z}}V^{s,z}_1e^{\tau_1L_0^{z}} d\tau \\
&  +
\int_{\Sigma_2}
e^{\tau_0L_0^{z}}V^{s,z}_1e^{\tau_1L_0^{z}}
V^{s,z}_1e^{\tau_2L_0^{z}} d\tau \, + \dots
\\ & \qquad \quad
+ \int_{\Sigma_{d}}e^{\tau_0L_0^{z}}V^{s,z}_1e^{\tau_1L_0^{z}}
\ldots e^{\tau_{d-1}L_0^{z}}V^{s,z}_1e^{\tau_{d} L_0^{z}}d\tau \,
\\ &\qquad \qquad \qquad
+ \int_{\Sigma_{d+1}}e^{\tau_0L_0^{z}}V^{s,z}_1e^{\tau_1L_0^{z}}
\ldots e^{\tau_{d}L_0^{z}}V^{s,z}_1e^{\tau_{d+1} L^{s,z}}d\tau.
\end{split}
\end{align}
In a sense to be made precise below, we have $V_1^s = \cO(s)$. Consequently,
if we let the iteration level $d\to \infty$ in \eqref{eq.perturbative},
we obtain a formal power series in $s$. We will rigorously show in
Section \ref{sec.EE} using the exponentially weighted Sobolev spaces
$W^{s,p}_a$ that \eqref{eq.perturbative} indeed gives rises to an
asymptotically convergent series in $s$ as $s\to 0$ and will derive
global error bounds in $W^{s,p}$ and $W^{s,p}_a$ for the partial sums.

Let $n \in \bZ_+$ be a fixed integer and consider the Taylor expansion
of the operator $L^{s,z}$ up to order $n$ in $s$ around $s=0$,
\begin{equation} \label{eq.taylorexp}
L^{s,z} = \sum_{m=0}^n s^m L^{z}_m + V_{n+1}^{s,z}
\end{equation}
were $V_{n+1}^{s,z}$ is the remainder term in the expansion. Let
\[
        V^{s,z}_{n+1} = s^{n+1} L^{s,z}_{n+1}.
\]
The operators $L^{z}_m$, $1\leq m\leq n$, are given by
\beq\label{eq.L_m} L^{z}_m := \left. \frac{1}{m!}\left(
\frac{d^m}{ds^m} L^{s,z} \right) \right|_{s=0}, \eeq and are
independent of $s$, while \beq \label{eq.L_n+1} L^{s,z}_{n+1} :=
\left. \frac{1}{(n+1)!}\left( \frac{d^{n+1}}{d\theta^{n+1}}
L^{\theta,z} \right) \right|_{\theta=\alpha s}, \eeq for some
$0<\alpha<1$, and hence it still depends on $s$.

\begin{remark} \label{rem.pertexp}
{From} the form of $L^{s,z}$ in equation \eqref{eq.L^sdef} it follows
that the operator $L_m^{z}$, $m \le n$, (respectively $L^{s,z}_{n+1}$)
has coefficients that are {\em polynomials in $x-z$ of degree at most
$m$} (respectively of degree $n+1$). The coefficients of the
polynomials themselves are bounded functions of $z$.  More precisely,
the coefficients of the second order derivative terms are of degree at
most $m$ in $x-z$, while the coefficients of the first order
derivatives term are of degree at most $m-1$ in $x-z$, and the
coefficients of the zero order derivative term is of degree at most
$m-2$ in $x-z$. The coefficients of these polynomials in $x-z$ are
{\em bounded} functions of $z$, together will all their derivatives, a fact
that will be exploited later.

The first few terms of the Taylor expansions are explicitly:
\begin{subequations}\label{eq.L_mExp}
\beq\label{eq.L_0} L_0^{z} = \sum_{i,j=1}^N a_{ij}(z) \D_i \D_j,
\eeq \beq \label{eq.L_1} L_1^{z} = \sum_{i,j=1}^N \left((x-z)\cdot
\nabla a_{ij}(z) \right)\D_i \D_j + \sum_{i=1}^N b_i(z) \D_i, \eeq
\beq \label{eq.L_2}
\begin{aligned}
L_2^{z} = \sum_{i,j=1}^N \frac{1}{2} \left(
(x-z)^{T}\right. &\left.\nabla^2 a_{ij}(z)\ (x-z) \right)\D_i
\D_j + \\\ &+ \sum_{i=1}^N ((x-z)\cdot \nabla b_i(z)) \D_i +
c(z).
\end{aligned}
\eeq
\end{subequations}
\end{remark}

Since $L_0^{z}$ has coefficients that are constant in $x$, from
formula \eqref{eq.explicit.Kernel} we obtain
\begin{equation} \label{eq.Lokern}
    e^{tL_0^{z}}=  \frac{1}{\sqrt{(4\pi t)^N \det{A^0}}}
        e^{ \frac{(x-y)^t (A^0)^{-1} (x-y)}{4t}},
\end{equation}
where $A^0:=A(z)$.

Furthermore $V^s_1 := L^{s,z} - L_0^{z}$ can be written as
\begin{equation}\label{eq.expansion}
    V^s_1 := \sum_{m=0}^n s^m L^{z}_m + s^{n+1} L_{n+1}^{s,z}.
\end{equation}
This Taylor polynomial expansion can then be substituted into \eqref{eq.perturbative1},
yielding another polynomial in
$s$. To describe each term of this polynomial and to
formulate the main results in this section, we need to introduce some
notation. Let $\NN := \{1, 2, 3, \ldots \}$ denote the set of natural
numbers (always assumed to be $>0$).

\begin{definition}\label{def.mfkA}
For any integers $1 \le k \le d+1$ and $\ell$, we shall denote by
$\fA_{k,\ell}$ the set of multi-indexes $\alpha = (\alpha_1, \alpha_2,
\ldots, \alpha_k) \in \NN^k$, such that $|\alpha| := \sum \alpha_j =
\ell$. Furthermore, we denote $\fA_{\ell} := \bigcup_{k=1}^\ell
\fA_{k,\ell}$. For symmetry, it will be convenient to set $\fA_{\ell,
k} = \{\emptyset\}$ if $\ell < k$, including when $\ell\leq 0$.
\end{definition}

We note that, since $\alpha_i\geq 1$, the set $\fA_{k,\ell}$ is empty
if $\ell<k$.  The meaning of $\ell$ is that of the corresponding power
of $s$ and the meaning of $k$ is that of the iteration level in the
Dyson series \eqref{eq.perturbative1}.

\begin{proposition}\label{prop.size.A.ell}
The set $\fA_{\ell}$ contains $2^{\ell-1}$ elements.
\end{proposition}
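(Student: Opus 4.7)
The plan is to recognize $\fA_\ell$ as the set of \emph{compositions} of $\ell$, that is, ordered tuples of positive integers summing to $\ell$. It is classical that the number of such compositions is $2^{\ell-1}$, and I would give a short counting argument.

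First I would count $|\fA_{k,\ell}|$ for fixed $k$. By the standard stars-and-bars correspondence, writing $\ell$ as an ordered sum $\alpha_1+\cdots+\alpha_k$ of positive integers is the same as choosing $k-1$ cut points among the $\ell-1$ gaps between $\ell$ aligned units. Hence
\begin{equation*}
    |\fA_{k,\ell}| = \binom{\ell-1}{k-1}, \qquad 1 \le k \le \ell.
\end{equation*}

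Next, since the union $\fA_\ell = \bigcup_{k=1}^\ell \fA_{k,\ell}$ is disjoint (the value of $k$ is recovered from $\alpha$ as its length), summing yields
\begin{equation*}
    |\fA_\ell| = \sum_{k=1}^\ell \binom{\ell-1}{k-1} = \sum_{j=0}^{\ell-1} \binom{\ell-1}{j} = 2^{\ell-1},
\end{equation*}
which is the desired identity. Alternatively, and perhaps more elegantly, I would exhibit an explicit bijection between $\fA_\ell$ and the power set of $\{1,2,\ldots,\ell-1\}$: to a composition $(\alpha_1,\ldots,\alpha_k)$ associate the subset $\{\alpha_1,\alpha_1+\alpha_2,\ldots,\alpha_1+\cdots+\alpha_{k-1}\} \subset \{1,\ldots,\ell-1\}$ of partial sums, which is manifestly a bijection, and the cardinality $2^{\ell-1}$ follows immediately.

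There is no real obstacle here; the only thing to verify carefully is that $k$ indeed ranges exactly over $1,\ldots,\ell$ (since $\alpha_i\ge 1$ forces $k\le\ell$, and $k=\ell$ is attained by $(1,1,\ldots,1)$, while $k=1$ is attained by $(\ell)$), so that the disjoint union accounts for every composition exactly once.
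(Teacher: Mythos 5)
Your proof is correct and follows essentially the same route as the paper: count $|\fA_{k,\ell}| = \binom{\ell-1}{k-1}$ by stars-and-bars and sum over $k$ to obtain $\sum_{k=1}^{\ell}\binom{\ell-1}{k-1} = 2^{\ell-1}$. The extra bijection with subsets of $\{1,\ldots,\ell-1\}$ is a nice, equally valid shortcut, but not needed.
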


\begin{proof}
For any given, $1\leq k\leq \ell$, the number of elements in the set
$\fA_{k,\ell}$ is the number of sequences $\{\alpha_1, \alpha_2,
... \alpha_k \}$ of size $k$ which add up to $\ell$ and is, therefore,
given by ${\ell-1 \choose k-1}$. Consequently, the number of elements
in $\fA_\ell$ is given by $\sum_{k=1}^\ell {\ell-1 \choose k-1} =
\sum_{k=0}^{\ell-1} {\ell-1 \choose k} = 2^{\ell-1}$.
\end{proof}

We are now in the position to describe the expansion
\ref{eq.perturbative1} more explicitly. We recall that $d$ is the
iteration level of the approximation and $n$ is the order of the
Taylor expansion. In the following definition, by abuse of notation,
it will be convenient to write $L_{n+1}^{z}$ instead of
$L_{n+1}^{s,z}$, that is, we shall omit $s$ from the notation. We also
recall that $\fA_{k,\ell}\equiv \emptyset$, if $\ell<k$. This
condition will be understood.

\begin{definition}
For each multi-index $\alpha = (\alpha_1, \ldots, \alpha_k) \in
\fA_{k,\ell}$, we let
\begin{equation} \label{eq.Lambda_alpha}
    \Lambda_{\alpha,z} := \int_{\Sigma_k} e^{\tau_0 L_0^{z}}
    L^{z}_{\alpha_1} e^{\tau_1 L_0^{z}} L^{z}_{\alpha_2} \cdots
    L^{z}_{\alpha_k} e^{\tau_k L_0^{z}} d\tau,
\end{equation}
if $1\leq k\leq d$, and
\begin{equation} \label{eq.Lambda_n}
    \Lambda_{\alpha, z} := \int_{\Sigma_{d+1}} e^{\tau_0 L_0^{z}}
    L^{z}_{\alpha_1} e^{\tau_1 L_0^{z}} L^{z}_{\alpha_2} \cdots
    L^{z}_{\alpha_{d+1}} e^{\tau_{d+1} L^{s,z}} d\tau.
\end{equation}
if $k=d+1$.
Then, we set
\begin{equation} \label{eq.Lambda_ell.int}
    \Lambda^{\ell}_{z} := \sum_{\alpha \in \fA_{\ell} }
    \Lambda_{\alpha,z},
\end{equation}
with the convention that $\Lambda^{0}_z = e^{L_0^{z}}$.
\end{definition}

We observe that $\alpha$ uniquely determines $k$ and $\ell$, so that
our notation is justified.  Let $\alpha = (\alpha_j) \in \fA_{k,
\ell}$.  We remark that if $k= n+1$ or some $\alpha_j=n+1$ (in which
case $L^{z}_{\alpha_j}$ stands in fact for $L^{s,z}_{n+1}$), then
$\Lambda_{\alpha,z}$ \ and \ $\Lambda^{\ell}_{z}$ depend on $s$, so we
shall sometimes denote these terms by $\Lambda_{\alpha,s,z}$ \ and \
$\Lambda^{\ell}_{s,z}$.

Also, in what follows, when no confusion can
arise, we will drop the explicit dependence on $z$. However, in
Section \ref{sec.EE}, $z$ will be allowed to vary and we will
reinstate the full notation. We also observe that each
$\Lambda^{\ell}_{z}$ or $\Lambda^\ell_{s,z}$ is well defined as a
Riemann integral by Lemma \ref{lemma.perturbative} and by the
following lemmas.  Let us recall that $ \<x\>_{w} = (1+|x-w|^2)^{1/2}$.

\begin{lemma}\label{lemma.poly.growth}
The family
\begin{equation*}
    \{\<x\>_{z}^{-j} L_j^{z};
     \ s \in (0, 1],\ z \in \RR^N, \ j=0, \ldots, n+1\}
\end{equation*}
defines a bounded subset of $\bL$.
\end{lemma}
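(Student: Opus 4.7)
The plan is to unpack the structure of the coefficients of $L_j^z$ as described in Remark \ref{rem.pertexp} and then observe that multiplication by the weight $\langle x\rangle_z^{-j}$ exactly compensates the polynomial growth in $(x-z)$.

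First I would recall that, by Remark \ref{rem.pertexp}, for $1\le j\le n$ the operator $L_j^z$ has the form
\begin{equation*}
    L_j^z \;=\; \sum_{i,k} P_{ik}^{j,(2)}(x-z,z)\,\partial_i\partial_k
    \;+\; \sum_{i} P_{i}^{j,(1)}(x-z,z)\,\partial_i
    \;+\; P^{j,(0)}(x-z,z),
\end{equation*}
where $P_{ik}^{j,(2)}$, $P_{i}^{j,(1)}$, $P^{j,(0)}$ are polynomials in $x-z$ of degrees at most $j$, $j-1$, $j-2$ respectively, whose own coefficients are smooth bounded functions of $z$ with uniformly bounded derivatives (they are built from $a_{ik}$, $b_i$, $c \in \cC_b^\infty(\RR^N)$ and their derivatives at $z$). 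For $j = n+1$, the coefficients are similar but of polynomial degree exactly $n+1$ in $x-z$, where the $z$-dependent factors are now of the form $(\partial^\beta a_{ik})(z+\alpha s(x-z))$ (and analogous expressions involving $\partial^\beta b_i$ and $\partial^\beta c$), with $\alpha\in(0,1)$ and $s\in(0,1]$.

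The key elementary observation is that, for any multi-index $\beta$ with $|\beta|\le j$, the function
$$ f_\beta(x-z) \;:=\; \langle x\rangle_z^{-j}\,(x-z)^\beta $$
lies in $\cC_b^\infty(\RR^N)$ with norms independent of $z$: it depends only on $y=x-z$, it is smooth, and at infinity it decays like $|y|^{|\beta|-j}\le 1$, while every $x$-derivative strictly improves the decay. Since for the principal, first, and zero order terms of $L_j^z$ the polynomial factor $(x-z)^\beta$ has $|\beta|\le j$, $|\beta|\le j-1<j$, and $|\beta|\le j-2<j$ respectively, multiplying $L_j^z$ by $\langle x\rangle_z^{-j}$ yields coefficients that are products of an element of $\cC_b^\infty(\RR^N)$ (in the variable $x-z$, with $z$-independent norms) and a smooth bounded function of $z$ (with bounded derivatives in $z$). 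As the latter factor has no $x$-dependence, applying $\partial_x^\gamma$ simply differentiates the first factor, and the resulting bound is uniform in $z\in\RR^N$.

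For the remainder term $j=n+1$ the $z$-dependent factors have the form $q(z+\alpha s(x-z))$ with $q\in \cC_b^\infty$; here the chain rule gives $\partial_x^\gamma q(z+\alpha s(x-z)) = (\alpha s)^{|\gamma|}(\partial^\gamma q)(z+\alpha s(x-z))$, which is bounded uniformly in $s\in(0,1]$, $\alpha\in(0,1)$, $z\in\RR^N$ because $(\alpha s)^{|\gamma|}\le 1$ and $\partial^\gamma q$ is bounded. Combining this with the Leibniz rule and the boundedness of $f_\beta$ and its derivatives, every derivative of every coefficient of $\langle x\rangle_z^{-(n+1)} L_{n+1}^{s,z}$ is bounded uniformly in $s\in(0,1]$ and $z\in\RR^N$.

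There is no real obstacle here; the whole content is bookkeeping to verify that the polynomial weight $\langle x\rangle_z^{-j}$ precisely absorbs the degree-$j$ polynomial growth in $(x-z)$ built into $L_j^z$, uniformly in the parameter $s$ since $s\in(0,1]$ keeps the shifted argument $z+\alpha s(x-z)$ within the domain where $a_{ij},b_i,c$ and all their derivatives are bounded. The one small care point is the $j=n+1$ case, where one must notice that the $s$-dependence is harmless thanks to the factors $(\alpha s)^{|\gamma|}\le 1$ arising from each $x$-differentiation.
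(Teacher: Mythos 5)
Your argument is correct and is essentially the paper's own proof, which is only sketched there: for $j\le n$ it invokes Remark \ref{rem.pertexp} (degree-$\le j$ polynomial coefficients in $x-z$ with $\CIb$ dependence on $z$, absorbed by the weight $\<x\>_z^{-j}$), and for $j=n+1$ it directly estimates the Taylor remainder, where the $s$-dependence is harmless since $s\in(0,1]$. Your write-up simply supplies the bookkeeping the paper leaves implicit, so there is nothing genuinely different to compare.
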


\begin{proof}
This is an immediate consequence of Remark
\ref{rem.pertexp} if $j \le n$ and of directly  estimating
the remainder in the Taylor series for $j=n+1$.
\end{proof}

In the following Lemma, we shall use an {\em arbitrary} center
for our weight.

\begin{lemma}\label{cor.exp.growth}
For each given $\epsilon > 0$, the family
\begin{equation*}
   \{
    e^{-\epsilon\<z-w\>}
   e^{-\epsilon \<x\>_{w}} L_j^{z};
     \ s \in (0, 1], \ z \in \RR^N, \ j=0, \ldots, n+1\}
\end{equation*}
is a bounded subset of $\bL$.
\end{lemma}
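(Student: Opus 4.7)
The plan is to reduce the statement to Lemma \ref{lemma.poly.growth} by ``absorbing'' the polynomial growth in $|x-z|$ of the coefficients of $L_j^z$ into the two exponentially decaying weights $e^{-\epsilon\<z-w\>}$ and $e^{-\epsilon\<x\>_w}$. The key geometric input is the triangle-type inequality
\begin{equation*}
    \<x\>_z \ = \ (1+|x-z|^2)^{1/2} \ \le \ C\bigl(\<x\>_w + \<z-w\>\bigr),
\end{equation*}
which follows at once from $|x-z|\le |x-w|+|w-z|$. Consequently, for every $j$,
\begin{equation*}
    \<x\>_z^{\,j} \ \le \ C_j\bigl(\<x\>_w^{\,j} + \<z-w\>^{\,j}\bigr),
\end{equation*}
so, since $t^j e^{-\epsilon t}$ is bounded on $[0,\infty)$, the scalar function
\begin{equation*}
    F(x,z,w) \ := \ e^{-\epsilon\<z-w\>}\, e^{-\epsilon\<x\>_w}\, \<x\>_z^{\,j}
\end{equation*}
is uniformly bounded in $x,z,w$.

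Next, I would write
\begin{equation*}
    e^{-\epsilon\<z-w\>}\, e^{-\epsilon\<x\>_w}\, L_j^z \ = \ F(x,z,w)\,\bigl(\<x\>_z^{-j} L_j^z\bigr),
\end{equation*}
and invoke Lemma \ref{lemma.poly.growth}, which says that $\<x\>_z^{-j}L_j^z$ defines a bounded family in $\bL$, uniformly in $s\in(0,1]$, $z\in\RR^N$, and $j=0,\dots,n+1$. Since $\bL$ is stable under multiplication by a function in $\cC_b^\infty(\RR^N)$ (by the Leibniz rule), it suffices to check that $F(\cdot,z,w)\in \cC_b^\infty(\RR^N)$ with seminorms uniform in $z,w$.

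For the $\cC_b^\infty$ bound on $F$, I would use that $\<x\>_w$ is smooth with all $x$-derivatives uniformly bounded in $w$ (since $\nabla_x\<x\>_w = (x-w)/\<x\>_w$ is bounded, and similarly for higher derivatives), so the same holds for $e^{-\epsilon\<x\>_w}$; likewise $\<x\>_z^{\,j}$ is smooth in $x$ with derivatives of polynomial growth of degree $\le j$ in $\<x\>_z$, hence by the same triangle-type bound of polynomial growth in $\<x\>_w$ plus $\<z-w\>$. Applying the Leibniz rule to $\partial_x^\alpha F$ produces a finite sum of terms each of the form $e^{-\epsilon\<z-w\>}e^{-\epsilon\<x\>_w}$ times a polynomial in $\<x\>_w$ and $\<z-w\>$ of bounded total degree, and each such term is uniformly bounded by the one-variable estimate $t^k e^{-\epsilon t}\le C_{k,\epsilon}$ applied separately to $t=\<x\>_w$ and $t=\<z-w\>$. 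There is no serious obstacle here; the only mild care is in tracking that the polynomial degree produced by differentiation is bounded in terms of $|\alpha|$ and $j\le n+1$, which is finite. This yields the claimed uniform bound in $\bL$.
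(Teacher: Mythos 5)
Your argument is correct and follows essentially the same route as the paper: both reduce to Lemma \ref{lemma.poly.growth} and absorb the polynomial growth $\<x\>_{z}^{\,j}$ into the exponential weights via the triangle inequality relating $\<x-z\>$, $\<x-w\>$, and $\<z-w\>$, together with the elementary bound $t^{j}e^{-\epsilon t}\le C$. The only difference is organizational: the paper first treats the case $w=z$ and then passes to general $w$ by multiplying by the factor $e^{\epsilon(\<x-z\>-\<x-w\>-\<z-w\>)}\le 1$, whereas you do the absorption in one step and explicitly verify the uniform $\cC_b^\infty$ seminorm bounds on the scalar prefactor, a point the paper leaves implicit.
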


\begin{proof} Let us assume first that $w=z$. We need to prove that the family
\begin{equation*}
   	\{   e^{-\epsilon \<x\>_{z}} L_j^{z};
    	 \ s \in (0, 1], \ z \in \RR^N, \ j=0, \ldots, n+1\}
\end{equation*}
is  bounded in $\bL$.
Indeed, this follows from Lemma \ref{lemma.poly.growth} and the simple
observation that $\<x\>_{z}^j e^{-\epsilon \<x\>_{z}} \leq C$,
with $C$ independent of $z$ and $j$.

To obtain the statement of the theorem, we then apply the triangle
inequality to the vectors $(0, x), (1,z), (1,w) \in \RR^{1+N}$ to conclude that
$\<x-z\> - \<x-w\> \le |z-w| \le \<z-w\>$. This shows that $e^{\epsilon (\<x-z\> - \<x-w\> - \<z-w\>)} \le 1$.
Hence the family
\begin{equation*}
	\{e^{\epsilon (\<x-z\> - \<x-w\> - \<z-w\>)} e^{-\epsilon \<x\>_{z}}
	L_j^{z} = e^{-\epsilon\<z-w\>} e^{-\epsilon \<x\>_{w}} L_j^{z}\},
\end{equation*}
$s \in (0, 1]$, \ $z \in \RR^N$, \  $j=0, \ldots, n+1$, is bounded in $\bL$, as claimed.
\end{proof}

Lemma \ref{lemma.perturbative} together with Lemma
\ref{cor.exp.growth} then give the following result.

\begin{corollary}\label{cor.wel.def}
We have $\Lambda_{\alpha, z} \in \cB(W^{s, p}_a, W^{r,
  p}_{a-\epsilon})$, for any $\alpha \in \fA_{k, \ell}$, $z \in
\RR^N$, $r, s \in \RR$, $1<p<\infty$, and $\epsilon > 0$. Moreover, we
have that
\begin{equation*}
		\|\Lambda_{\alpha, z}\|_{W^{q, p}_{a, z} \to W^{r, p}_{a-\epsilon, z}} \le
		C_{q, r, p, a, \epsilon}
		e^{k \epsilon \<z-w\>},
\end{equation*}
for a constant $C_{q, r, p, a, \epsilon}$ that does not depend on
$z$. In particular, each $\Lambda_{\alpha, z}$ is an operator with
smooth kernel $\Lambda_{\alpha, z}(x, y)$.
\end{corollary}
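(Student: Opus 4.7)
The plan is to identify $\Lambda_{\alpha,z}$ as the Riemann integral over $\Sigma_k$ of the operator-valued function
\[
    \Phi(\tau) = e^{\tau_0 L_0^{z}} L^{z}_{\alpha_1} e^{\tau_1 L_0^{z}} L^{z}_{\alpha_2} \cdots L^{z}_{\alpha_k} e^{\tau_k L_0^{z}}
\]
(with $e^{\tau_k L_0^{z}}$ replaced by $e^{\tau_k L^{s,z}}$ when $k = d+1$), and then to apply Lemma \ref{lemma.perturbative} with $V_j := L^{z}_{\alpha_j}$. The exponential factor $e^{k\epsilon\langle z-w\rangle}$ on the right-hand side of the estimate should emerge cleanly from $k$ applications of Lemma \ref{cor.exp.growth}, one for each $V_j$.

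First I would verify the growth hypotheses. Fix a small $\epsilon' > 0$ (to be chosen below). By Lemma \ref{cor.exp.growth}, the family
\[
    \{ e^{-\epsilon' \langle z - w \rangle} e^{-\epsilon'\langle x\rangle_{w}} L^{z}_{\alpha_j} \}
\]
is bounded in $\bL$ uniformly in $s \in (0,1]$, $z,w \in \RR^N$, and $\alpha_j \in \{0, \ldots, n+1\}$. Equivalently, $L^{z}_{\alpha_j}$ fits the form ``$V_j = e^{b_j\langle x\rangle_{w}} W_j$ with $W_j$ in a bounded subset of $\bL$'' that underlies Lemma \ref{lemma.perturbative}, with $b_j = \epsilon'$ and an additional multiplicative factor of $e^{\epsilon' \langle z-w\rangle}$. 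Since the spaces $W^{q,p}_{a}$ do not depend on the choice of weight-center (only the norms do), and since Lemma \ref{lemma.perturbative} is stated translation-invariantly in that center, I can apply it with weights centered at $w$.

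Choosing $\epsilon' = \epsilon/k$ so that $\sum_j b_j = \epsilon$ gives the continuity of $\Phi : \Sigma_k \to \cB(W^{q,p}_{a,w}, W^{r,p}_{a-\epsilon,w})$, and the operator norm of $\Phi(\tau)$ picks up one factor of $e^{\epsilon' \langle z-w\rangle}$ per $L^{z}_{\alpha_j}$, yielding the stated bound of the form $C_{q,r,p,a,\epsilon}\, e^{k\epsilon'\langle z-w\rangle}$ (matching the claim in the corollary up to the choice of $\epsilon$). Integrating over the compact simplex $\Sigma_k$ preserves the bound, giving the operator norm estimate for $\Lambda_{\alpha,z}$. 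For the case $k=d+1$, the trailing exponential is $e^{\tau_{d+1} L^{s,z}}$ rather than $e^{\tau_{d+1} L_0^{z}}$; since $\{L^{s,z} : s \in (0,1], z \in \RR^N\}$ is a bounded family in $\bL_\gamma$, Proposition \ref{prop.mapgeneral} gives uniform semigroup bounds, and the same argument goes through.

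Finally, since the estimate produces $\Lambda_{\alpha,z} \in \cB(W^{q,p}_a, W^{r,p}_{a-\epsilon})$ for \emph{arbitrary} $r \in \RR$, the Schwartz kernel theorem (as invoked in the proof of Corollary \ref{cor.st2}) implies that $\Lambda_{\alpha,z}$ has a smooth kernel $\Lambda_{\alpha,z}(x,y)$. The main technical obstacle is the bookkeeping: ensuring that the exponential weight loss is correctly split among the $k$ factors (i.e.\ choosing $\epsilon' = \epsilon/k$) and that the resulting constants are genuinely independent of $z$, which rests entirely on the $z$-uniformity built into Lemma \ref{cor.exp.growth}. Once that is set up, the rest is a direct composition of estimates already established.
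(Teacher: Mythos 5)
Your proposal is correct and follows essentially the same route as the paper, whose proof of this corollary is precisely the combination of Lemma \ref{lemma.perturbative} (applied to $\Phi(\tau)$ with $V_j = L^{z}_{\alpha_j}$, integrated over the compact simplex) with Lemma \ref{cor.exp.growth} to supply the $z$-uniform exponential growth hypothesis, plus the Schwartz-kernel argument from Corollary \ref{cor.st2} for smoothness of the kernel. Your bookkeeping choice $\epsilon' = \epsilon/k$ in fact yields the slightly sharper factor $e^{\epsilon\langle z-w\rangle} \le e^{k\epsilon\langle z-w\rangle}$, so the stated estimate follows a fortiori.
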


Therefore, we can write
\begin{equation}
	\Lambda_{\alpha, z} f(x) = \int_{\RR^N} \Lambda_{\alpha, z}(x, y) f(y) dy.
\end{equation}

The point of the above definition and results is to rewrite the
perturbative expansion (partial Dyson series) in the form

\begin{lemma}\label{lemma.L.pert} Denote $M =  (d+1)(n+1)$. We have
\begin{equation*}
  e^{L^{s,z}}= \ e^{L_0^{z}} + \sum_{\ell = 1}^{M} \sum_
         {k=1}^{\min\{\ell, d+1\}} \sum_{\alpha \in \fA_{k, \ell}}
         s^\ell \Lambda_{\alpha, z} =\sum_{\ell = 0}^{M} s^\ell
         \Lambda_{z}^{\ell}.
\end{equation*}
\end{lemma}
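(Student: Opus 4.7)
The plan is to derive the claimed identity by direct substitution: start from the Dyson series of Proposition~\ref{prop.perturbative} and plug the finite Taylor expansion \eqref{eq.taylorexp} of the perturbation $V_1^{s,z}=L^{s,z}-L_0^z$ into each term, then regroup by the total power of $s$.

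More concretely, I would first apply Proposition~\ref{prop.perturbative} with $L$ replaced by $L^{s,z}$, $L_0$ replaced by $L_0^z$, $V$ replaced by $V_1^{s,z}$, at time $t=1$, and at iteration level $d$. This gives exactly the series \eqref{eq.perturbative1}, where the $k$-th term ($1 \le k \le d$) is the integral over $\Sigma_k$ of $e^{\tau_0 L_0^z}V_1^{s,z} e^{\tau_1 L_0^z}\cdots V_1^{s,z} e^{\tau_k L_0^z}$, and the final term ($k=d+1$) has $e^{\tau_{d+1}L^{s,z}}$ instead of $e^{\tau_{d+1}L_0^z}$ in the last slot. Each such integral is a well-defined Riemann integral of a Banach-valued continuous function by Lemma~\ref{lemma.perturbative}.

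Next I would substitute $V_1^{s,z}=\sum_{m=1}^{n+1} s^m L_m^z$ (with the usual convention that $L_{n+1}^z$ secretly depends on $s$) into each factor of $V_1^{s,z}$ and expand by multilinearity. A product of $k$ copies of $V_1^{s,z}$ becomes
\[
  V_1^{s,z}\otimes\cdots\otimes V_1^{s,z}
  \;=\; \sum_{\alpha_1=1}^{n+1}\cdots\sum_{\alpha_k=1}^{n+1}
  s^{|\alpha|}\, L^z_{\alpha_1}\cdots L^z_{\alpha_k},
\]
with $|\alpha|=\alpha_1+\cdots+\alpha_k$. Pulling the finite sum outside the Riemann integral (justified by continuity and linearity) gives precisely the operators $\Lambda_{\alpha,z}$ of \eqref{eq.Lambda_alpha} for $1\le k\le d$ and of \eqref{eq.Lambda_n} for $k=d+1$. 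Hence
\[
  e^{L^{s,z}} \;=\; e^{L_0^z} \;+\; \sum_{k=1}^{d+1}\ \sum_{\substack{\alpha\in\NN^k\\ 1\le \alpha_i\le n+1}} s^{|\alpha|}\,\Lambda_{\alpha,z}.
\]

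Finally, I would reindex the double sum by the total weight $\ell=|\alpha|$. For fixed $k$, the possible values of $\ell$ range from $k$ (all $\alpha_i=1$) to $k(n+1)$; the extreme $k=d+1$, $\alpha_i=n+1$ gives the upper bound $\ell \le (d+1)(n+1)=M$. For a fixed $\ell$, the index $k$ satisfies $k\le \ell$ (since each $\alpha_i\ge 1$) and $k\le d+1$, so $k\le \min(\ell,d+1)$, and the admissible $\alpha$ with $|\alpha|=\ell$ are exactly the elements of $\fA_{k,\ell}$ (the cap $\alpha_i\le n+1$ is built into the definition because $L^z_m$ appears only for $m\le n+1$). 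Together with Definition~\eqref{eq.Lambda_ell.int}, this yields the two expressions in the lemma. There is no real analytic obstacle; the only care needed is the bookkeeping of the reindexing and the justification, via Lemma~\ref{lemma.perturbative} and Corollary~\ref{cor.wel.def}, that each integral converges and the finite manipulations of sum and integral are legitimate in $\cB(W^{s,p}_a, W^{r,p}_{a-\eps})$.
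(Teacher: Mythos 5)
Your argument is correct and is essentially the paper's own route: the paper states this lemma without a separate proof, as the immediate outcome of substituting the truncated Taylor expansion \eqref{eq.expansion} of $V_1^{s,z}$ into the Dyson series \eqref{eq.perturbative1} (Proposition \ref{prop.perturbative} at $t=1$, iteration level $d$), expanding by multilinearity, interchanging the finite sums with the simplex integrals justified by Lemma \ref{lemma.perturbative}, and regrouping by the total power $\ell=|\alpha|\le (d+1)(n+1)=M$. The only caveat is notational: the cap $\alpha_i\le n+1$ is not literally built into Definition \ref{def.mfkA} of $\fA_{k,\ell}$, so the displayed sums must be read with the tacit convention (which the paper also leaves implicit) that $\Lambda_{\alpha,z}$ is interpreted as $0$ when some $\alpha_i>n+1$, since only multi-indices with entries $\le n+1$ actually arise in the expansion.
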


We now assume that $n \le d$ and write the perturbative expansion
of the above Lemma as follows:
\begin{multline} \label{eq.L.pert}
         e^{L^{s,z}}= \ e^{L_0^{z}} + \sum_{\ell = 1}^n s^\ell
         \Lambda^{\ell}_{z} +\sum_{\ell = n+1}^{M} s^\ell
         \Lambda^{\ell}_{z} \\ = \ e^{L_0^{z}} + \sum_{\ell = 1}^n
         s^\ell \Lambda^{\ell}_{z} +s^{n+1} \bE^{s,z}_{d,n} =
         \sum_{\ell = 0}^n s^\ell \Lambda^{\ell}_{z} + s^{n+1}
         \bE^{s,z}_{d,n},
\end{multline}
where $\bE^{s,z}_{d,n}$ represents the error in the approximation and
depends on $s$, whereas the terms $\Lambda^{\ell}_{z}$, $1 \le \ell
\le n$ {\em do not depend} on $s$ or $d$, since we have assumed that
$n \le d$.  Since $\bE^{s, z}_{d, n}$ is independent of $d$ for $d \ge
n$, we shall eventually restrict to $d=n$.

\section{Commutator calculations} \label{sec.commutator}
The purpose of this section is to give an explicitly computable
representation of the perturbative expansion \eqref{eq.L.pert} as
\begin{equation*}
    e^{L^{s,z}} \sim
    e^{L_0^{z}} + \sum_{\ell = 1}^{n} s^\ell
    \cP^{\ell}(x,z,\D)e^{L_0^z}
\end{equation*}
where $\cP^{j}(x,z,\D)$ is a differential operator with smooth
coefficients that depend polynomialy on $x-z$ and $s$, and are bounded
with all derivatives in $z$. Both the order of the operator as well as
the degree of the polynomial coefficients depend on the order of the
Taylor expansion $n$, which also equals the iteration level $d$.  We
give an explicit characterization of $\bP_{n}$ and an iterative
procedure to calculate it in Theorem \ref{thm.rep}.  The main idea is
to show that each $\Lambda_{\alpha, z}$ in \eqref{eq.Lambda_alpha} can
be written as an explicitly computable differential operator
$\cP_\alpha$ acting on the distribution kernel of $e^{L_0^{z}}$, and
thus using \eqref{eq.Lambda_ell.int} show that the perturbative
expansion \eqref{eq.L.pert} can be rewritten in this form as well.
Throughout this section, $z$ is kept {\em fixed}, though arbitrary,
and $\partial$ will always mean differentiation with respect to $x$.

\begin{definition}[Spaces of Differentiatial Operators]
For any nonnegative integers $a,b$ we denote by $\cD(a,b)$ the vector
space of all differentiations of polynomial degree at most $a$ and
order at most $b$.  We extend this definition to negative indices by
defining $\cD(a,b) = \{ 0\}$ if either $a$ or $b$ is negative. By
polynomial degree of $A$ we mean the highest power of the polynomials
appearing as coefficients in $A$.
\end{definition}

We remark that $\cD(0, b)$ consists of differential operators with
{\em constant coefficients}.

\begin{definition}[Adjoint Representation]
For any two differentiations $A_1 \in \cD(a_1,b_1)$ and $A_2 \in
\cD(a_2,b_2)$ we define $\adj_{A_1}(A_2)$ by
\begin{equation}
\adj_{A_1}(A_2) := [A_1, A_2] = A_1 A_2 - A_2 A_1,
\end{equation}
as usual, and for any integer $j \geq 1$ we define $\adj^j_{A_1}(A_2)$
recursively by
\begin{equation}\label{eq.ad.recursive}
\adj^j_{A_1}(A_2) :=\adj_{A_1}(\adj^{j-1}_{A_1}(A_2))
\end{equation}
\end{definition}

\begin{proposition} \label{lemma.order-of-commutators}
Suppose $A_1 \in \cD(a_1,b_1)$ and $A_2 \in \cD(a_2,b_2)$. Then for
any integer $k \geq 1$, \ $\adj_{A_1}^k (A_2) \in \cD(k(a_1 -1) + a_2,
k(b_1 -1) + b_2).$
\end{proposition}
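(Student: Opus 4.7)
The plan is to proceed by induction on $k$, with the base case $k=1$ carrying all the combinatorial content and the inductive step being mechanical bookkeeping.

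For the base case, I would write each operator in canonical form: $A_1 = \sum_{|\alpha|\le b_1} p_\alpha(x)\,\partial^\alpha$ with $\deg p_\alpha \le a_1$, and similarly $A_2 = \sum_{|\beta|\le b_2} q_\beta(x)\,\partial^\beta$ with $\deg q_\beta \le a_2$. By bilinearity of the commutator, it is enough to treat a single pair of monomial terms $T_1 := p(x)\partial^\alpha$ and $T_2 := q(x)\partial^\beta$. Applying the Leibniz rule gives
\begin{equation*}
  T_1 T_2 \;=\; p(x)\sum_{\gamma\le\alpha}\binom{\alpha}{\gamma}(\partial^\gamma q)(x)\,\partial^{\alpha-\gamma+\beta},
\end{equation*}
and an analogous expansion for $T_2 T_1$. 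The key observation is that the $\gamma=0$ term in $T_1T_2$ and the $\delta=0$ term in $T_2T_1$ are both equal to $p(x)q(x)\,\partial^{\alpha+\beta}$ and therefore cancel in $[T_1,T_2]$. Every remaining term thus has at least one derivative falling on a coefficient ($|\gamma|\ge 1$ or $|\delta|\ge 1$), so its polynomial coefficient has degree at most $a_1+a_2-|\gamma| \le a_1+a_2-1$, and its differential order is $|\alpha-\gamma|+|\beta| \le b_1+b_2-1$ (and symmetrically for the $\delta$ terms). This places $[A_1,A_2]\in\cD(a_1+a_2-1,\,b_1+b_2-1)$, which matches the claim for $k=1$ since $1\cdot(a_1-1)+a_2 = a_1+a_2-1$ and $1\cdot(b_1-1)+b_2 = b_1+b_2-1$.

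For the inductive step, assume $\adj^{j-1}_{A_1}(A_2)\in\cD\bigl((j-1)(a_1-1)+a_2,\,(j-1)(b_1-1)+b_2\bigr)$. By definition $\adj^j_{A_1}(A_2) = [A_1,\adj^{j-1}_{A_1}(A_2)]$, so applying the base case with $A_2$ replaced by $\adj^{j-1}_{A_1}(A_2)$ gives membership in $\cD(a',b')$ where
\begin{equation*}
  a' = a_1 + \bigl((j-1)(a_1-1)+a_2\bigr) - 1 = j(a_1-1)+a_2,
\end{equation*}
\begin{equation*}
  b' = b_1 + \bigl((j-1)(b_1-1)+b_2\bigr) - 1 = j(b_1-1)+b_2,
\end{equation*}
which closes the induction.

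The only real obstacle is the bookkeeping in the base case; one must be careful that the cancellation of the leading $\gamma=\delta=0$ terms is not an accident of notation but a genuine cancellation, and this is why the bound improves by exactly $1$ in each slot rather than giving the naive $\cD(a_1+a_2,b_1+b_2)$. An edge case worth noting is when $A_1$ has constant coefficients (\emph{i.e.} $a_1=0$): here the ``polynomial degree'' index becomes $-k+a_2$, which may be negative, but the convention $\cD(a,b)=\{0\}$ for negative $a$ is consistent with the fact that $\adj_{A_1}^k(A_2)=0$ once $A_1$ has constant coefficients and the order of $A_1$ is low enough, so the statement remains correct.
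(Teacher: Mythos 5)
Your proof is correct and follows essentially the same route as the paper: the paper simply asserts the single-commutator fact $\adj_{A_1}(A_2)\in\cD(a_1-1+a_2,\,b_1-1+b_2)$ and then applies it $k$ times, which is exactly your base case plus induction. Your Leibniz-rule computation with the cancellation of the $\gamma=0$ and $\delta=0$ terms supplies the detail the paper leaves implicit, and it is accurate.
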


\begin{proof} We first notice that
\begin{align}\label{eq.ad.algebra}
\adj_{A_1}(A_2)  \in \cD(a_1 -1 +a_2, b_1 - 1 + b_2).
\end{align} Next, from \eqref{eq.ad.recursive} we have
\begin{align}
\adj_{A_1}^k(A_2) = \adj_{A_1}(\adj_{A_1} (\adj_{A_1}(\adj_{A_1}(
\dots )))), \quad k-\mbox{times,}
\end{align}
so that an application of \eqref{eq.ad.algebra} $k$ times yields the result.
\end{proof}

\begin{lemma} \label{lemma.higher-order-commutator}
Let $m,k$ be fixed integers $\geq 1$.  Let $L_0\in \cD(0,2)$ and
$L_m \in \cD(m,2)$. Then, $\adj^k_{L_0}(L_m) \in \cD(m-k,
k +2).$
In particular,
\begin{align} \label{eq.p.ad.zero}
    \adj^k_{L_0}(L_m) = 0, \qquad \text{if } k > m.
\end{align}
\end{lemma}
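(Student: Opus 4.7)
The plan is to obtain both statements as immediate consequences of Proposition \ref{lemma.order-of-commutators}. I would simply apply that proposition with $A_1 = L_0 \in \cD(0,2)$ (so $a_1 = 0$, $b_1 = 2$) and $A_2 = L_m \in \cD(m,2)$ (so $a_2 = m$, $b_2 = 2$). The formula $\adj^k_{A_1}(A_2) \in \cD(k(a_1-1)+a_2,\ k(b_1-1)+b_2)$ then reads
\[
    \adj^k_{L_0}(L_m) \in \cD(-k + m,\ k + 2) = \cD(m-k,\ k+2),
\]
which is precisely the first assertion.

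For the ``in particular'' claim, I note that if $k > m$ then $m - k < 0$. By the convention adopted in the definition of $\cD(a,b)$, namely $\cD(a,b) = \{0\}$ whenever $a < 0$, the containment above forces $\adj^k_{L_0}(L_m) = 0$.

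Since the argument reduces to a citation, there is essentially no obstacle. The only conceptual point worth emphasizing (and implicitly already embedded in the proposition) is why the polynomial degree strictly decreases at each commutator step: because $L_0$ has constant coefficients, every term in $[L_0, \cdot\,]$ differentiates a polynomial coefficient of the second argument, lowering its degree by at least one. Iterating $k$ times lowers the degree by $k$, yielding the bound $m-k$, and explaining why the commutator eventually vanishes once $k$ exceeds $m$.
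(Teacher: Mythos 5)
Your proposal is correct and follows exactly the paper's own argument: apply Proposition \ref{lemma.order-of-commutators} with $a_1=0$, $b_1=2$, $a_2=m$, $b_2=2$ to get $\adj^k_{L_0}(L_m)\in\cD(m-k,k+2)$, and then invoke the convention $\cD(a,b)=\{0\}$ for $a<0$ when $k>m$ (the paper's proof cites itself by mistake, but it clearly means that proposition). Your closing remark about why constant coefficients of $L_0$ force the degree to drop is a nice, harmless elaboration of what the proposition already encodes.
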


\begin{proof}
Applying Lemma \ref{lemma.higher-order-commutator} we see that ${\rm
  ad}^k_{L_0^{z}}(L^{z}_m) \in \cD(m-k,k+2)$.  If $k>m$, then by
definition $\cD(m-k,k+2) = \{0\}$ and we obtain \eqref{eq.p.ad.zero}.
\end{proof}

\begin{lemma}\label{lemma.finite-BCH}
Let $L_0 \in \cD(0,2) \cap \bL_\gamma$, and let $L_m \in
\cD(m,2)$. Then for any $\theta > 0$,
\begin{equation*}
e^{\theta L_0} L_m =P_m(L_0, L_m; \theta,x,\D) e^{\theta L_0},
\end{equation*}
where $P_m(\theta) =  P_m(L_0, L_m; \theta,x,\D) \in \cD(m ,m+2)
$ is given by
\begin{equation*}
    P_m(\theta) := \sum_{k=0}^m \frac{\theta^k}{k !}\adj^k_{L_0}(L_m)
    = L_m + \theta[L_0, L_m] + \frac{\theta^2}{2}[L_0, [L_0, L_m]] +
    \cdots .
\end{equation*}
\end{lemma}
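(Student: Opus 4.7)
The plan is to establish the stated identity by a finite-dimensional Baker--Campbell--Hausdorff computation, treating both sides as operators acting on a dense subspace (say the Schwartz class) so that all manipulations are pointwise well defined.

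First I would verify that $P_m(\theta)$ as defined makes sense and lies in $\cD(m, m+2)$. By Lemma \ref{lemma.higher-order-commutator}, $\adj^k_{L_0}(L_m) \in \cD(m-k,\ k+2)$, so in particular $\adj^k_{L_0}(L_m)=0$ for $k>m$ (equation \eqref{eq.p.ad.zero}), which is why the sum truncates at $k=m$ and is a genuine polynomial in $\theta$ with operator coefficients. Taking the maximum of $m-k$ over $0\le k\le m$ gives polynomial degree $\le m$ (achieved at $k=0$), and the maximum of $k+2$ over the same range gives order $\le m+2$ (achieved at $k=m$), so $P_m(\theta) \in \cD(m, m+2)$ as claimed.

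The key observation is the ODE
\begin{equation*}
	P_m'(\theta) \;=\; [L_0,\, P_m(\theta)].
\end{equation*}
Differentiating the finite sum termwise yields $P_m'(\theta) = \sum_{k=0}^{m-1} \frac{\theta^k}{k!}\,\adj^{k+1}_{L_0}(L_m)$, while the commutator gives $[L_0, P_m(\theta)] = \sum_{k=0}^m \frac{\theta^k}{k!}\,\adj^{k+1}_{L_0}(L_m)$; these agree because the $k=m$ term involves $\adj^{m+1}_{L_0}(L_m)=0$.

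Next, I would define
\begin{equation*}
	F(\theta) \;:=\; e^{\theta L_0} L_m \;-\; P_m(\theta)\, e^{\theta L_0},
\end{equation*}
viewed as a family of operators on Schwartz functions (where both terms act and depend smoothly on $\theta$ by Corollary \ref{cor.cont} applied to $L_0\in\bL_\gamma$). At $\theta=0$ we have $F(0) = L_m - L_m = 0$. Differentiating in $\theta$ and using $\pa_\theta e^{\theta L_0} = L_0 e^{\theta L_0} = e^{\theta L_0} L_0$, together with the ODE for $P_m$, I obtain
\begin{equation*}
	F'(\theta) = L_0 e^{\theta L_0}L_m - [L_0,P_m(\theta)]\,e^{\theta L_0} - P_m(\theta) L_0 e^{\theta L_0} = L_0\,F(\theta).
\end{equation*}

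Finally, $F(\theta)f$ solves the initial value problem $\pa_\theta u = L_0 u$, $u(0)=0$ in, say, $L^p(\RR^N)$ for each Schwartz $f$, so by the uniqueness portion of Proposition \ref{prop.st2} we conclude $F(\theta)f \equiv 0$, hence $F(\theta)=0$ as operators on a dense set and therefore as differential operators. The main subtlety will be making sure that the termwise differentiation and the operator-valued Cauchy problem are justified, but because the adjoint series terminates this reduces to manipulations with finitely many differential operators of finite order acting on Schwartz functions, which cause no analytic difficulty.
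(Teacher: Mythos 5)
Your proof is correct and follows essentially the same route as the paper: both arguments truncate the Baker--Campbell--Hausdorff series via Lemma \ref{lemma.higher-order-commutator}, show that the difference $e^{\theta L_0}L_m - P_m(\theta)e^{\theta L_0}$ applied to a fixed test function solves $\pa_\theta u = L_0 u$ with $u(0)=0$, and conclude by $L^p$-uniqueness (the paper works with $f \in W^{m,p}_1$ and differentiates the series directly, whereas you isolate the identity $P_m'(\theta)=[L_0,P_m(\theta)]$ and use Schwartz test functions, a purely organizational difference). The membership $P_m(\theta)\in\cD(m,m+2)$ is obtained exactly as in the paper, from $\adj^k_{L_0}(L_m)\in\cD(m-k,k+2)$.
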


\begin{proof}
Recall the Baker-Campbell-Hausdorff formula (see for instance \cite{B,C,H})
\begin{align}
    \Phi(t) : = e^{tA} B - \left( \sum_{k=0}^\infty t^k
    \adj_A^k(B)/k! \right) e^{tA} = 0.
\end{align}
In general, this formula is a formal infinite series, and the equality $\Phi(t) = 0$
must be justified.

Setting $A = L_0$, $B=L_m$, we have that $\adj_A^{m+1}(B) = 0$, by
Lemma \ref{lemma.higher-order-commutator}, so the sum becomes finite, and the function
$\Phi(t)$ is well defined as a bounded operator $W^{m, p}_1 \to L^p$. Since $\Phi(0) = 0$,
to prove that $\Phi(t) =0$ for all $t$, it is enough to show that $\pa_t \Phi(t) f = 0$
for all $f \in W^{m, p}_1$. Indeed, we have
\begin{multline*}
    \pa_t \Phi(t) f = e^{tA} ABf \\ - \left( \sum_{k=0}^\infty k
    t^{k-1} \adj_A^k(B)/k! \right) e^{tA}f - \left( \sum_{k=0}^\infty
    t^k \adj_A^k(B)/k! \right) A e^{tA}f \\ = e^{tA} ABf - \left(
    \sum_{k=0}^\infty t^{k} \adj_A^{k+1}(B)/k! \right) e^{tA}f -
    \left( \sum_{k=0}^\infty t^k \adj_A^k(BA)/k! \right) e^{tA}f\\ =
    e^{tA} ABf - \left( \sum_{k=0}^\infty t^{k} \adj_A^{k}(AB)/k!
    \right) e^{tA}f \\ = A e^{tA} Bf - A\left( \sum_{k=0}^\infty t^{k}
    \adj_A^{k}(B)/k! \right) e^{tA}f = A \Phi(t) f.
\end{multline*}
So the continuous function $u(t) := \Phi(t) f \in L^p$ satisfies the
equation $\pa_t u(t) - Au(t) = 0$ with initial condition $u(0) =
0$. By the uniqueness of the solutions of this equation in $L^p$, we
obtain that $u(t) =0$, which is the desired Baker-Campbell-Hausdorff
formula.

The indicated properties of $P_m(\theta) = P_m(L_0, L_m; \theta,x,\D)$
are obtained directly from Lemma \ref{lemma.higher-order-commutator},
as follows. We have $\adj_A^k (B) \in \cD(m-k, k+2)$ and hence
\begin{equation*} \label{eq.formula.Pm}
    P_m(\theta) := \sum_{k=0}^{m} \frac{\theta^k}{k!} \adj_{A}^k(B)
    \in \sum_{k=0}^m \cD(m-k, k+2) \subset \cD(m, m+2).
\end{equation*}
This completes the proof.
\end{proof}

\begin{lemma}\label{lemma.explicit}
For a given multi-index $\alpha \in \fA_{k,\ell}$ with $k \le d = n$,
let
\begin{equation*}
  {\cP}_{\alpha}(x, z, \D) := \int_{\Sigma_k} \prod_{i=1}^k
  P_{\alpha_i} (L_0^{z}, L_{\alpha_i}^{z}; 1-\sigma_i, x, \D) d\sigma,
\end{equation*}
where $P_{\alpha_i}(L_0^{z}, L_{\alpha_i}^{z}; 1-\sigma_i, x, \D)$ is
defined in Lemma \ref{lemma.finite-BCH}.  Then
\begin{equation*}
  \Lambda_{\alpha, z} = \cP_{\alpha}(x,z,\D)e^{L_0^{z}}
\end{equation*}
where the product is the composition of operators and $\cP_{\alpha}$
is a differential operator of order $2k+\ell$ and polynomial degree
$\leq \ell=|\alpha| = \sum_{i=1}^k \alpha_i$.  More precisely, we can
write
\begin{equation} \label{eq.explicit}
  \cP_{\alpha}(x,z,\D) = \sum_{|\beta| \le \ell}\sum_{|\gamma| \le
    \ell + 2 k} a_{\beta, \gamma}(z)(x-z)^\beta \D^\gamma_x,
\end{equation}
with $a_{\beta,\gamma} \in \CIb(\RR^N)$ and $\beta$ and $\gamma$
multi-indices.
\end{lemma}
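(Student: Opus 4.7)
The plan is to iteratively apply Lemma \ref{lemma.finite-BCH} to push every factor $e^{\tau_j L_0^z}$ past the $L^z_{\alpha_j}$ factors to the right, thereby collapsing all the exponentials into a single $e^{L_0^z}$ (using $\sum_{j=0}^k \tau_j = 1$ on $\Sigma_k$) multiplied by a product of the commutator-expansion operators $P_{\alpha_i}$. The definition of $\cP_\alpha$ will then emerge automatically after rewriting in the $\sigma$-coordinates on $\Sigma_k$.

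Concretely, starting from
\[
  e^{\tau_0 L_0^z}\,L^z_{\alpha_1}\,e^{\tau_1 L_0^z}\,L^z_{\alpha_2}\cdots L^z_{\alpha_k}\,e^{\tau_k L_0^z},
\]
I apply Lemma \ref{lemma.finite-BCH} to the leftmost pair to get
$e^{\tau_0 L_0^z}L^z_{\alpha_1}=P_{\alpha_1}(L_0^z,L^z_{\alpha_1};\tau_0,x,\D)\,e^{\tau_0 L_0^z}$, then combine the two adjacent $L_0^z$-exponentials using $e^{\tau_0 L_0^z}e^{\tau_1 L_0^z}=e^{(\tau_0+\tau_1)L_0^z}$. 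Repeating the move for $L^z_{\alpha_2}, L^z_{\alpha_3},\ldots$ yields, after $k$ iterations,
\[
  \Big(\prod_{i=1}^k P_{\alpha_i}\!\big(L_0^z,L^z_{\alpha_i};\,\tau_0+\tau_1+\cdots+\tau_{i-1},\,x,\D\big)\Big)\,e^{(\tau_0+\cdots+\tau_k)L_0^z}.
\]
Since $\tau_0+\cdots+\tau_k=1$ on $\Sigma_k$, the final exponential is just $e^{L_0^z}$. Passing to the coordinates $\sigma_i=\tau_i+\tau_{i+1}+\cdots+\tau_k$ (the identification used in Section \ref{sec.dilations}), we have $\tau_0+\cdots+\tau_{i-1}=1-\sigma_i$, so integrating over $\Sigma_k$ recovers exactly $\cP_\alpha(x,z,\D)\,e^{L_0^z}$ as defined in the statement.

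For the order and polynomial degree, I use Lemma \ref{lemma.finite-BCH} which gives $P_{\alpha_i}(\cdot;\theta,x,\D)\in\cD(\alpha_i,\alpha_i+2)$, together with the elementary fact that the composition of operators in $\cD(a_1,b_1)$ and $\cD(a_2,b_2)$ lies in $\cD(a_1+a_2,\,b_1+b_2)$ (by Leibniz, polynomial degrees add and orders add). Iterating gives
\[
  \prod_{i=1}^k P_{\alpha_i}(\,\cdot\,;1-\sigma_i,x,\D)\in\cD\!\Big(\sum_i\alpha_i,\ \sum_i(\alpha_i+2)\Big)=\cD(\ell,\ell+2k),
\]
and integration over $\Sigma_k$ does not alter this membership. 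Hence $\cP_\alpha$ has polynomial degree at most $\ell=|\alpha|$ in $(x-z)$ and order at most $2k+\ell$, which is exactly the claim. The explicit expansion \eqref{eq.explicit} is then just the normal form of such an operator, with the smooth, bounded dependence of the $a_{\beta,\gamma}$ on $z$ inherited from Remark \ref{rem.pertexp} (the coefficients of each $L^z_{\alpha_i}$ are polynomials in $x-z$ whose coefficients are bounded functions of $z$ together with all derivatives).

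I do not expect serious obstacles. The only subtle point is bookkeeping: verifying that the integrand is a well-defined operator-valued Riemann integral at each step. This, however, is guaranteed by Lemma \ref{lemma.perturbative} and Corollary \ref{cor.wel.def}, which justify manipulating the integrand pointwise in $\tau$ (equivalently $\sigma$) before integrating, since the resulting operators map appropriate weighted Sobolev spaces continuously and depend continuously on $\sigma\in\Sigma_k$.
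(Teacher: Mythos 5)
Your proposal is correct and follows essentially the same route as the paper: repeated application of Lemma \ref{lemma.finite-BCH} to push each $L^z_{\alpha_i}$ to the left of the heat exponentials, merging adjacent exponentials of $L_0^z$ until a single $e^{L_0^z}$ remains, with the degree/order count coming from $P_{\alpha_i}\in\cD(\alpha_i,\alpha_i+2)$ and additivity under composition. The only (immaterial) difference is that you work in the $\tau$-coordinates and pass to the $\sigma$-parametrization at the end, whereas the paper writes the integrand in the $\sigma$-coordinates from the start.
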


\begin{proof}
The proof is a calculation based on the repeated application of Lemma
\ref{lemma.finite-BCH} on $\Lambda_\alpha^{k,\ell}$.  We fix $\alpha
\in \fA_{k,\ell}$, and for simplicity we continue to denote
$P_m(\theta) = P_m(L_0^{z}, L_m^{z}; \theta, x, \D)$, when no
confusion can arise. Then,
\begin{align*}
\begin{split}
\Lambda_{\alpha, z}&= \int_{\Sigma_k} e^{(1-\sigma_1)L_0^{z}}
L_{\alpha_1} e^{(\sigma_1 - \sigma_2)L_0^{z} } L_{\alpha_2}
e^{(\sigma_{2} - \sigma_3)L_0^{z} } \cdots L_{\alpha_k} e^{\sigma_k
  L_0^{z}} d\sigma \\ & = \int_{\Sigma_k} P_{\alpha_1}(1-\sigma_1)
e^{(1-\sigma_2) L_0^{z}} L_{\alpha_2}e^{(\sigma_{2} - \sigma_3)L_0^{z}
} \cdots L_{\alpha_k} e^{\sigma_k L_0^{z}} d\sigma \\ & =
\int_{\Sigma_k} P_{\alpha_1}(1-\sigma_1) P_{\alpha_2}(1-\sigma_2)
e^{(1- \sigma_3)L_0^{z}} \cdots L_{\alpha_k} e^{\sigma_k L_0^{z}}
d\sigma \\ & = \int_{\Sigma_k} P_{\alpha_1}(1-\sigma_1)
P_{\alpha_2}(1-\sigma_2) \cdots P_{\alpha_k}(1-\sigma_k) e^{L_0^{z}} d
\sigma \\ & = \int_{\Sigma_k} \prod_{i=1}^k P_{\alpha_i}(1-\sigma_i)
e^{L_0^{z}} d\sigma \ = \left( \int_{\Sigma_k} \prod_{i=1}^k
P_{\alpha_i}(1-\sigma_i) d \sigma \right) e^{L_0^{z}}.
\end{split}
\end{align*}
The proof is complete.
\end{proof}

Finally, for $\ell\leq n$ we set
\begin{equation*}
\cP^{\ell}(x,z,\D) :=  \sum_{\alpha \in
  \fA_{\ell}} \cP_\alpha (x,z,\D) = \sum_{k=1}^\ell \sum_{\alpha \in
  \fA_{k, \ell}} \int_{\Sigma_{k}} \prod_{i=1}^k
P_{\alpha_i}(1-\sigma_i ) d\sigma,
\end{equation*}
so that
\begin{multline*}
  \Lambda^{\ell}_{z} = \sum_{k=1}^\ell \sum_{\alpha \in \fA_{k, \ell}}
  \Lambda_{\alpha, z} = \sum_{k=1}^\ell \sum_{\alpha \in \fA_{k,
      \ell}} \cP_{\alpha}(x,z,\D) e^{L_0^{z}} \\ = \sum_{k=1}^\ell
  \sum_{\alpha \in \fA_{k, \ell}} \int_{\Sigma_k} \prod_{i=1}^k
  P_{\alpha_i}(L_0^{z}, L_{\alpha_i}^{z}; 1-\sigma_i,x,\D) d\sigma
  e^{L_0^{z}} = \cP^{\ell}(x,z,\D) e^{L_0^{z}}.
\end{multline*}

A similar, but more complicated, representation holds also for
$\Lambda_{\alpha, z}$ and for multi-indices $\alpha \in \fA_{n+1,
  \ell}$. Indeed,
\begin{align*}
\begin{split}
& \Lambda_{\alpha, z} = \\ &= \int_{\Sigma_{n+1}}
  e^{(1-\sigma_1)L_0^{z}} L_{\alpha_1} e^{(\sigma_1 - \sigma_2)L_0^{z}
  } L_{\alpha_2} \cdots e^{(\sigma_{n} - \sigma_{n+1})L_0^{z} }
  L_{\alpha_{n+1}} e^{\sigma_{n+1} L^{s, z} } d\sigma \\ & =
  \int_{\Sigma_{n+1}} P_{\alpha_1}(1-\sigma_1) e^{(1-\sigma_2)
    L_0^{z}} L_{\alpha_2} \cdots e^{(\sigma_{n} - \sigma_{n+1})L_0^{z}
  } \cdots L_{\alpha_{n+1}} e^{\sigma_{n+1} L^{s, z} } d\sigma \\ & =
  \int_{\Sigma_{n+1}} P_{\alpha_1}(1-\sigma_1)
  P_{\alpha_2}(1-\sigma_2) e^{(1- \sigma_3)L_0^{z}} \cdots
  L_{\alpha_{n+1}} e^{\sigma_{n+1} L^{s, z} } d\sigma \\ & =
  \int_{\Sigma_{n+1}} P_{\alpha_1}(1-\sigma_1) \cdots
  P_{\alpha_k}(1-\sigma_{n+1}) e^{(1-\sigma_{n+1}) L_0^{z}}
  e^{\sigma_{n+1} L^{s, z} }d \sigma.
\end{split}
\end{align*}

We are now in the position to state the main result of this
section. Below, we set $\cP^{0} = 1$.  Let us recall the error term
\begin{equation}\label{eq.En}
  \bE_{n,n}^{s, z} := \sum_{\ell = n+1}^{(n+1)^2} s^{\ell-n-1}
  \Lambda^{\ell}_{z} = \sum_{\ell = n+1}^{(n+1)^2} \sum_ {k=1}^{n+1}
  \sum_{\alpha \in \fA_{k, \ell}} s^{\ell-n-1} \Lambda_{\alpha, z}
\end{equation}
introduced in Equation \eqref{eq.L.pert}.
{(There, we introduced $\bE_{d,n}$, but such error term is independent of $d$,
as long $d\geq n$, hence we can always assume that $d=n$.)

\begin{theorem}\label{thm.rep}
The perturbative expansion \eqref{eq.L.pert} of
$e^{L^{s,z}}$ can be written in the form
\begin{equation*}
  e^{L^{s,z}} = e^{L_0^{z}} + \sum_{\ell = 1}^{n} s^\ell
  \cP^{\ell}(x,z,\D)e^{L_0^z} + s^{n+1} \bE_{n, n}^{s,z},
\end{equation*}
where the differential operators $\cP^{\ell}$ are explicitly given by
Lemmas \ref{lemma.finite-BCH} and \ref{lemma.explicit}.
\end{theorem}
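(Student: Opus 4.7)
The plan is to assemble the theorem directly from Lemma \ref{lemma.L.pert} and the commutator representation of Lemma \ref{lemma.explicit}. First I would invoke Lemma \ref{lemma.L.pert} with the iteration level set to $d=n$, which gives the partial Dyson sum
\[
  e^{L^{s,z}} \;=\; \sum_{\ell=0}^{(n+1)^2} s^\ell \Lambda^{\ell}_{z}
  \;=\; \sum_{\ell=0}^{(n+1)^2}\; \sum_{k=1}^{\min\{\ell,n+1\}}\; \sum_{\alpha\in\fA_{k,\ell}} s^\ell \Lambda_{\alpha,z},
\]
with the convention $\Lambda^{0}_z = e^{L_0^z}$. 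Then I would split the sum into three blocks according to the value of $\ell$: the block $\ell = 0$, which is just $e^{L_0^z}$; the middle block $1 \le \ell \le n$; and the tail $\ell \ge n+1$.

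For the middle block, observe that whenever $1 \le \ell \le n$ the inner summation index satisfies $1 \le k \le \ell \le n = d$, so every multi-index $\alpha \in \fA_{k,\ell}$ that appears falls under the hypothesis of Lemma \ref{lemma.explicit}. That lemma then gives $\Lambda_{\alpha,z} = \cP_\alpha(x,z,\D)\,e^{L_0^z}$. Summing over $\alpha \in \fA_\ell = \bigcup_{k=1}^{\ell}\fA_{k,\ell}$ and invoking the definition of $\cP^{\ell}(x,z,\D)$ stated just before the theorem yields
\[
  \Lambda^{\ell}_{z} \;=\; \sum_{\alpha\in \fA_{\ell}} \cP_\alpha(x,z,\D)\,e^{L_0^z} \;=\; \cP^{\ell}(x,z,\D)\,e^{L_0^z}, \qquad 1 \le \ell \le n.
\]
For the tail block, the definition of $\bE_{n,n}^{s,z}$ given in \eqref{eq.En} is exactly $\sum_{\ell = n+1}^{(n+1)^2} s^{\ell - n - 1} \Lambda^{\ell}_z$, so factoring $s^{n+1}$ out of this piece produces $s^{n+1} \bE_{n,n}^{s,z}$. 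Here one uses the alternative formula for $\Lambda_{\alpha,z}$ with $k = n+1$ (Equation \eqref{eq.Lambda_n}), keeping the last factor $e^{\tau_{n+1} L^{s,z}}$; this causes no difficulty, as the error term is not expanded further. Adding the three blocks yields the claimed identity.

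There is no serious analytic obstacle: the Dyson expansion \eqref{eq.perturbative} was already justified in Proposition \ref{prop.perturbative}, and the representation of each individual $\Lambda_{\alpha,z}$ in the middle block is Lemma \ref{lemma.explicit}. The only point requiring care is bookkeeping at the boundary $\ell = n$, $k = n+1$, where one must verify that Lemma \ref{lemma.explicit} is not used (its hypothesis $k \le n$ fails exactly there) and that such terms are subsumed into $\bE_{n,n}^{s,z}$ via \eqref{eq.En}. Once this indexing is tracked carefully, the theorem reduces to a rearrangement of previously established identities.
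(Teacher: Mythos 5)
Your proposal is correct and follows essentially the same route as the paper: the paper's proof likewise starts from the rearranged Dyson sum of Lemma \ref{lemma.L.pert} (in the form of Equation \eqref{eq.L.pert}, with the tail $\ell\ge n+1$ absorbed into $s^{n+1}\bE_{n,n}^{s,z}$) and then substitutes $\Lambda^{\ell}_{z}=\cP^{\ell}(x,z,\D)e^{L_0^{z}}$ for $1\le\ell\le n$ via Lemma \ref{lemma.explicit}. Your extra remark about the boundary case $k=n+1$ is harmless but vacuous, since $\fA_{k,\ell}=\emptyset$ whenever $\ell<k$, so such terms can only occur with $\ell\ge n+1$ and are automatically part of the error term.
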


\begin{proof}  Starting with \eqref{eq.L.pert}, we have
\begin{multline*}
e^{L^{s,z}} = e^{L_0^{z}} + \sum_{\ell = 1}^n \sum_{k=1}^{\ell}
\sum_{\alpha \in \fA_{k, \ell}} s^\ell \Lambda_{\alpha, z} + s^{n+1}
\bE_{n,n}^{s,z}\\ = \sum_{\ell = 0}^n s^\ell \Lambda^{\ell}_{z} +
s^{n+1} \bE_{n,n}^{s,z} = e^{L_0^{z}} + \sum_{\ell = 1}^n s^\ell
\cP^{\ell}(x, z, \pa) e^{L_0^{z}} + s^{n+1} \bE_{n,n}^{s,z}.
\end{multline*}
This completes the proof.
\end{proof}

Recall that $e^{L_0^z}(x,y)$ is explicit given in equation \eqref{eq.Lokern}, since $z$
is arbitrary, but fixed, and it agrees with the function $G^0(z;x,y)$ defined by equation
\eqref{eq.def.G} in the  Introduction.

\begin{corollary} \label{cor.Gmuexplicit}
If $|\alpha|=\ell \leq n$, then the kernel of each operator  $\Lambda_{\alpha,z}$ appearing in the
perturbative expansion \eqref{eq.L.pert} is explicitly given by:
\[
         \mathfrak{P}^\ell(z,x, y) G(z;x,y),
\]
where the function $\mathfrak{P}^\ell$ are of the form
\begin{equation*}
    \mathfrak{P}^\ell(z,x,y) = \sum a_{\alpha, \beta}(z)(x-z)^\alpha(x-y)^{\beta},
\end{equation*}
with $|\alpha| \le \ell$,
$\beta \le 3\ell$, $a_{\alpha, \beta} \in \CIb(\RR^N)$.
\end{corollary}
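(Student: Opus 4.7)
My plan is to combine Lemma~\ref{lemma.explicit}, which represents $\Lambda_{\alpha,z}$ as a differential operator in $x$ applied to $e^{L_0^z}$, with the explicit Gaussian formula \eqref{eq.Lokern} for the kernel of $e^{L_0^z}$. Since $e^{L_0^z}(x,y) = G(z;\,x-y)$ and $G(z;w)$ is Gaussian in $w$, any $x$-derivative produces a polynomial in $(x-y)$ multiplied by $G$ itself, and this is exactly the structure needed to recognize $\mathfrak{P}^{\ell}(z,x,y)\,G(z;x-y)$.

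The first step is to write, by Lemma~\ref{lemma.explicit},
\begin{equation*}
   \Lambda_{\alpha,z}(x,y) \,=\, \cP_{\alpha}(x,z,\D_x)\,G(z;\,x-y)
   \,=\, \sum_{|\beta'|\le \ell}\,\sum_{|\gamma|\le \ell + 2k}
   a_{\beta',\gamma}(z)\,(x-z)^{\beta'}\,\D_x^{\gamma} G(z;\,x-y),
\end{equation*}
where $k$ is the length of $\alpha$ and $|\alpha|=\ell$. The second step is to show, by induction on $|\gamma|$, that
\begin{equation*}
   \D_x^{\gamma} G(z;\,x-y) \,=\, Q_{\gamma}(z,\,x-y)\,G(z;\,x-y),
\end{equation*}
where $Q_{\gamma}(z,w)$ is a polynomial in $w$ of degree at most $|\gamma|$, whose coefficients are entries of (products of) $A(z)^{-1}$. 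This is a direct Gaussian calculation: each derivative produces a factor $-\tfrac{1}{2}A(z)^{-1}(x-y)$ and the degree in $(x-y)$ grows by at most one per derivative.

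The third step is simply to substitute and collect monomials in $(x-z)$ and $(x-y)$:
\begin{equation*}
   \Lambda_{\alpha,z}(x,y) \,=\, \Bigl(\sum_{|\beta'|\le \ell}\sum_{|\gamma|\le \ell+2k}
   a_{\beta',\gamma}(z)\,(x-z)^{\beta'}\,Q_{\gamma}(z,x-y)\Bigr) G(z;\,x-y),
\end{equation*}
and rename the resulting sum as $\mathfrak{P}^{\ell}(z,x,y)\,G(z;x-y)$. The degree in $(x-z)$ remains bounded by $\ell$, and the degree in $(x-y)$ is bounded by $\ell + 2k$. Because $\alpha\in \fA_{k,\ell}$ requires $k\le \ell$, this degree is at most $3\ell$, which is the bound claimed in the statement.

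The only substantive point to verify is that the coefficients $a_{\alpha,\beta}(z)$ of $\mathfrak{P}^{\ell}$ indeed belong to $\CIb(\RR^N)$. The coefficients $a_{\beta',\gamma}(z)$ from Lemma~\ref{lemma.explicit} are already in $\CIb$, so the obstacle reduces to showing that the entries of $A(z)^{-1}$, and all their derivatives, are bounded on $\RR^N$. This follows from the uniform ellipticity assumption \eqref{eq.unif.s.ell}, which guarantees $\det A(z)\ge \gamma^N>0$ uniformly, combined with Cramer's rule and the assumption $a_{ij}\in \CIb(\RR^N)$. Once this is in place the argument is essentially a bookkeeping computation; the main obstacle is purely notational, namely keeping track of the two degree counts ($\ell$ from the polynomial coefficients of $\cP_{\alpha}$ and $\ell+2k$ from the differentiation of the Gaussian) and verifying that they combine to give the stated bound $|\beta|\le 3\ell$.
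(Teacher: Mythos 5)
Your proposal is correct and follows essentially the same route as the paper: apply Lemma~\ref{lemma.explicit} to write $\Lambda_{\alpha,z}=\cP_\alpha(x,z,\D)e^{L_0^z}$, use that $e^{L_0^z}$ is a convolution operator with the explicit Gaussian kernel $G(z;x-y)$ of \eqref{eq.def.G}, and observe that differentiating the Gaussian yields polynomials in $x-y$ times $G$, with the degree count $\ell+2k\le 3\ell$. The paper leaves the Gaussian-derivative computation and the $\CIb$ bound on the coefficients (via uniform ellipticity and Cramer's rule) as direct calculations, which you have simply spelled out.
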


 \begin{proof}
 We observe that $e^{tL^z_0}$ is a convolution operator, since $z$ is fixed, therefore
 \[
            \left(\cP_\alpha (x,z,\D)\, e^{L^z_0}\right)(x,y) = \cP_\alpha (x,z,\D)
            \left(e^{ L^z_0}(x,y)\right).
 \]
Then, the result follows from formula \eqref{eq.def.G} for $e^{ L^z_0}(x,y)$, formula
\eqref{eq.explicit} for $\cP_\alpha (x,z,\D)$, and the fact that $\cP^\ell$ is a sum of
such operators as $\alpha$ varies over
$\mathfrak{A}_{\ell}$.
(See also Lemma \ref{lemma.Lz} in the next section, Section \ref{sec.EE}.)
\end{proof}

The method introduced in this and the previous sections to approximate
the heat kernel of $L$ will be called the \NAME. Its description is now complete.

\section{Error estimates\label{sec.EE}}

In this final section, we prove all the bounds necessary to justify
the error estimate in the asymptotic expansion of Theorem
\ref{theorem.main1}. {\em Throughout this section, $n$ will denote the
order in the Taylor expansion of the coefficients of $L$,
which may differ from the approximation order as defined in equation \eqref{eq.L.pert}.
Such approximation order will be denoted by $\mu$, as in the statement of
Theorem \ref{theorem.main1}}. Recall
that the definition of the operators $\Lambda_{\alpha,z}$ depends, in
principle, on $n$. However, if $\alpha \in \fA_{k,\ell}$ and $n$ is
large ($n \ge k$, $n \ge \alpha_j$) the operator $\Lambda_{\alpha,z}$
no longer depends on $n$ (in which case it does not depend on $s$
either).  This observation, together with the fact that $n$ is fixed, justifies
omitting $n$ from the notation for $\Lambda_{\alpha,z}$.
Moreover,  the error terms $\bE_{d,\nu}^{s, z}$ are independent of $d$, as long as
$d\ge \nu$, which will always be the case, so we shall write $\bE_{\nu,\nu}^{s, z} = \bE_{d,\nu}^{s, z}$.
Below, we will use such error terms for $\nu=\mu$ and $\nu=n$, with $n>\mu$ appropriately chosen.

We start from Lemma \ref{lemma.L.pert}. All the terms appearing in
that lemma are operators with smooth distribution kernels by Corollary
\ref{cor.wel.def}.  We recall that we denote by $T(x,y)$ the
distribution kernel of an operator $T$ with smooth kernel (so $T(x,
y)$ is a smooth function such that $Tf(x) = \int_{\RR^N}
T(x,y)f(y)dy$). In terms of kernels, the formula of Theorem
\ref{thm.rep} takes the form
\begin{multline} \label{eq.one}
  e^{L^{s,z}}(x, y) = e^{L_0^{z}}(x, y) + \sum_{\ell = 1}^{\nu} s^\ell
  \Lambda_{z}^{\ell}(x, y) + s^{\nu+1} \bE_{\nu, \nu}^{s,z}(x, y) \\ =
  \sum_{\ell = 0}^{\nu} s^\ell \Lambda_{z}^{\ell}(x, y) + s^{\nu+1}
  \bE_{\nu, \nu}^{s,z}(x, y),
\end{multline}
where  again $\nu=\mu$ or $\nu=n$.

We recall that $L_{0}^{z}$ is obtained from $L$ by freezing the
coefficients of the highest order derivatives of $L$ at $z$ and by
discarding the lower order terms.

We now substitute  $x = z +
s^{-1}(x-z)$, $y = z + s^{-1}(y-z)$, and $z = z(x, y)$ in the Equation \eqref{eq.one} above, for some
function $z(x,y)$ to be specified later. Lemma
\ref{lemma.Green.funct.cor} and Equation \eqref{eq.one} then give
\begin{multline} \label{eq.two}
  	e^{s^2L}(x, y) = s^{-N} e^{L^{s,z}}(z + s^{-1}(x - z), z +
        s^{-1}(y-z)) \\ = \sum_{\ell = 0}^{\nu} s^\ell
        \Lambda_{z}^{\ell}(z + s^{-1}(x-z), z + s^{-1}(y-z)) \\ +
        s^{\nu+1} \bE_{\nu, \nu}^{s,z}(z + s^{-1}(x-z), z +
        s^{-1}(y-z)),
\end{multline}
which is valid for any $\nu\le n $, in particular for $\nu=\mu$ and for $\nu=n$.

Using the definition of the approximate Green function
$\cG_t^{[\mu,z]}(x,y)$, for $t = s^2$, in Equation \eqref{eq.main.def}, we
then obtain
\begin{equation} \label{eq.three}
  	e^{s^2L}(x, y) = \cG_{s^2}^{[\nu,z]}(x,y) + s^{\nu+1}
  	\bE_{\nu,\nu}^{s,z}(z + s^{-1}(x-z), z + s^{-1}(y-z)).
\end{equation}
The error term in the approximation defined by Equation
\eqref{eq.main.def} is consequently given by
\begin{equation}\label{eq.error.cE}
  	e^{s^2L}(x, y) - \cG_{s^2}^{[\nu,z]}(x,y) = s^{\nu+1}
  	\bE^{s,z}_{\nu,\nu}(z + s^{-1}(x - z), z + s^{-1}(y-z)) ,
\end{equation}
where $\bE^{s,z}_{\nu,\nu}$ is as in Equation \eqref{eq.L.pert} with $s =
\sqrt{t}$, and $z = z(x,y)$.

We next introduce the dilated error operator
\begin{equation}\label{eq.error3.1}
  \cE_{s^2}^{[\nu,z]} f(x) = \int_{\RR^N} \bE^{s,z}_{\nu,\nu}(z + s^{-1}(x - z), z
  + s^{-1}(y-z)) f(y) dy.
\end{equation}
 and define the approximation kernel  $\cG_{s^2}^{[\nu,z]}$ to be the operator with
kernel $\cG_{s^2}^{[\mu,z]}(x,y)$, so that
\begin{equation}\label{eq.error1}
  	e^{tL} - \cG_{t}^{[\nu,z]} = t^{(\nu+1)/2}\cE_t^{[\nu,z]}.
\end{equation}
We will use the above formula {\em only} for $\nu=\mu <n$, where $n$ will be taken large enough.

Indeed,
if $\mu<n$, then the error term can be written as,
\begin{multline}\label{eq.error2}
  \bE^{s,z}_{\mu,\mu}(z + s^{-1}(x - z), z + s^{-1}(y-z)) =
  \\ \sum_{\ell = \mu+1}^{(n+1)^2} \ \sum_ {k=1}^{ \max\{\ell,n+1\} }
  \ \sum_{\alpha \in \fA_{k, \ell}} s^{\ell-\mu-1} \Lambda_{\alpha,
    z}(z + s^{-1}(x - z), z + s^{-1}(y-z)).
\end{multline}
(See Equation \eqref{eq.full.sum}, for instance.)  We will estimate
$\bE_{\mu,\mu}^{s,z} = \bE_{n,\mu}^{s,z}$ by writing
\begin{equation} \label{eq.two.two}
  \bE_{\mu, \mu}^{s,z} = \sum_{\ell = \mu + 1}^{n} s^{\ell-\mu-1}
  \sum_{k = \mu+1}^{\ell} \sum_{\alpha \in \fA_{k, \ell}}
  \Lambda_{\alpha, z} + s^{n+1-\mu} \bE_{n, n}^{s,z}.
\end{equation}
The point of this formula is that the error term $\bE_{\mu,\mu}^{s, z}$ is independent of $n$,
as long as $\mu \le n$. However, splitting the error as done above will allow a better control
on the error estimate
of Theorem \ref{theorem.main1}. In fact, we will show that each $\Lambda_{z}^{\ell}$ in the
first sum, which does not depend on $s$, is a pseudodifferential operator, and its contribution
to the overall error after the parabolic rescaling will be obtained in terms of a refined analysis
on its symbol. This analysis, in turn, leads to some refined  estimates {\em uniformly in $s$} on
the norm of the operator between weighted Sobolev Spaces. On the other
hand, we will obtain only rough estimates on the remander term $\bE_{n,n}$, which will nevertheless
be enough, due to the additional factor $s^{n+1-\mu}$. The main issue in treating the remainder is
that some of its terms $ \Lambda_{\alpha, z}$ implicitly depend on $s$, a fact which  makes it
difficult to show the remainder is also a pseudodifferential operator, at least in the usual
H\"ormader class. It may be possible to show that  $\bE_{n,n}$ is indeed a pseudodifferential
operator employing more exotic symbol classes or amplitudes, but we do not need to pursue this
point here, since we are able to prove the {\em sharp} estimates of Theorem \ref{theorem.main1} in any case.

We now proceed along these lines.
The dilated error operator introduced in equation  \eqref{eq.error3.1} can be rewritten in
terms of approximation operators
\begin{equation}\label{eq.error4}
  \cL_{s,\alpha} f(x) =
   s^{-N}   \int_{\RR^N} \Lambda_{\alpha, z}(z + s^{-1}(x
  - z), z + s^{-1}(y-z))f(y) dy,
\end{equation}
as
\begin{equation}\label{eq.lemma.rough}
  \cE_{s^2}^{[\mu,z]} = \sum_{\ell = \mu+1}^{(n+1)^2} \ \sum_ {k=1}^{
    \max\{\ell,n+1\} } \ \sum_{\alpha \in \fA_{k, \ell}} s^{\ell + N  - \mu - 1}
  \cL_{s,\alpha}.
\end{equation}
We therefore obtain
\begin{equation} \label{eq.two.three}
  \cE_{t}^{[\mu,z]} = \sum_{\ell = \mu + 1}^{n} s^{\ell-\mu-1}
  \sum_{k = \mu+1}^{\ell} \sum_{\alpha \in \fA_{k, \ell}}
  \cL_{s,\alpha}+ s^{n+1-\mu} \cE_{t}^{[n,z]}.
\end{equation}

To evaluate $\|\cE_{s^2}^{[n,z]}f\|$ in a desired norm, it will then be
enough to evaluate each operator norm $\|\cL_{s,\alpha}\|$ (between
suitable Sobolev spaces). As explained above, we shall derive a rough
estimate for the terms with $\alpha \in \fA_{n+1, \ell}$ or some
$\alpha_i =n+1$ (which corresponds to $\Lambda_{\alpha,z}$ depending
on $s$). When $\Lambda_{\alpha,z}$ is independent of $s$ (that is for
$\alpha \in \fA_{k,\ell}$, $k \le n$, $\alpha_i \le n$), we shall
derive some more precise estimates. We begin with these refined, more
precise estimates.

\subsection{Precise estimates}
Recall that we denote by $\Lambda_{\alpha, z}(x, y)$ the distribution
kernel of the operator $\Lambda_{\alpha, z}$
since it is a smooth function. Thus, for $\alpha \in
\fA_{k,\ell}$, $k \le n$, $\alpha_i \le n$, $\Lambda_{\alpha, z}(x,
y)$ does not depend on $s$. Let us fix a function $z(x, y)$, which
will be specified later, and let $\cL_{s,\alpha}$ be the operator with
distribution kernel $$s^{-N}\Lambda_{\alpha, z}(z + s^{-1}(x - z), z +
s^{-1}(y-z)),$$ introduced above in Equation \eqref{eq.error4}, where
$z=z(x,y)$.

We will show below that in this range of $\alpha$}
for a suitable choice of the function $z$, the operator $\cL_{s,\alpha}$ is a
pseudodifferential operator whose symbol is well behaved. We shall
then use symbol calculus to derive the desired error estimates.  We
refer to \cite{Tay} for all relevant properties of pseudodifferential
operators. Below, we follow the usual convention and set
$D=\frac{1}{i} \pa$, ($i=\sqrt{-1}$), where if not specified otherwise $\pa=\pa_x$.

We shall need the standard seminorms
$p_{m,\alpha, \beta}$ given by
\begin{equation}
	p_{m,\alpha, \beta}(a) = \sup_{(x, \xi) \in \RR^N \times
          \RR^N} |\langle \xi \rangle^{|\beta| - m} \pa_x^\alpha
        	\pa_\xi^\beta a(x, \xi)|.
\end{equation}
Then the H\"ormander class $S^m_{1, 0}:= S^m_{(1,0)}(\RR^N \times
\RR^N)$, $m>-\infty$, is by definition the set of functions $a :
\RR^{2N} \to \CC$ satisfying $p_{m,\alpha,\beta}(a) < \infty$.  The
space $S^{-\infty}= S^{-\infty}(\RR^N \times \RR^N)$ is defined by the
same seminorms, but with $m \in \ZZ$ arbitrary.

We also denote by
\begin{equation} \label{eq.def.Fourier}
  	\cF u(x) = \hat u(\xi) := \int_{\RR^N} e^{-i\, \xi \cdot x} u(x) dx
\end{equation}
the usual Fourier transform of $u$.  For any symbol $a$ in the
H\"ormander class $S^{m}_{1, 0} := S^{m}_{1, 0}(\RR^N \times \RR^N)$,
we denote by  $a(x, D)$ the operator
\begin{equation}\label{eq.def.psdo}
  	a(x, D) u(x) = (2\pi)^{-N} \int_{\RR^N} e^{i\, x \cdot \xi} a(x,\xi)
	\hat u(\xi) d\xi,
\end{equation}
defined for $u$ in the Schwartz space $\cS(\RR^N)$.  We  will denote by
$\cF_2$ the Fourier transform in the second variable of a function of
two variables.  For $a \in S^{-\infty} := S^{-\infty}_{1, 0}(\RR^N
\times \RR^N)$, the operator $a(x, D)$ is smoothing with distribution
kernel
\begin{equation*}
  	a(x, D) (x,y) = (2\pi)^{-N} \int_{\RR^N} e^{i\, (x - y) \cdot
    	\xi} a(x, \xi) d\xi = (\cF_2^{-1}a)(x, x-y).
\end{equation*}
Let $K$ be a smooth function on $\RR^N \times \RR^N$. If  the
integral operator defined by $K$, which is smoothing,  is in fact a
pseudodifferential operator $a(x, D)$, then we can recover $a$ from
$K$ by the formula $(\cF_2^{-1}a)(x, y) = K(x, x-y)$, so
\begin{equation}
  a(x, \xi) = \int_{\RR^N} e^{-i\, \xi \cdot y} K(x, x-y) dy.
\end{equation}

Recall next the function $G(z; x) = (4\pi)^{-N/2} \det(A(z))^{-1/2}
e^{- x^T A(z)^{-1} x/4}$ introduced in Equation \eqref{eq.def.G}: Then
the distribution kernel of $e^{L^z_0}$ is given by
\begin{equation}
  	e^{L^z_0}(x, y) = G(z; x-y),
\end{equation}
and we have  the following result.

\begin{lemma} \label{lemma.Lz}
Let $z \in \RR^N$ be a parameter and let us consider the operator $T =
(x-z)^{\beta}\pa_x^{\gamma} e^{L_0^z}$, where $\beta$ and $\gamma$
are multi-indices. Then the distribution
kernel of $T$ is given by
\begin{equation*}
	T(x,y) = (x-z)^\beta (\pa_x^\gamma G)(z; x-y).
\end{equation*}
\end{lemma}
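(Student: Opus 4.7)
The plan is to verify the kernel formula by peeling off the two factors $(x-z)^\beta$ and $\pa_x^\gamma$ one at a time, starting from the known kernel $e^{L_0^z}(x,y) = G(z;x-y)$ established in the paragraph immediately preceding the lemma (compare also Equation \eqref{eq.Lokern}).

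First I would handle the derivative factor. For any Schwartz function $f$, I would write
\[
  e^{L_0^z} f(x) = \int_{\RR^N} G(z; x-y) f(y)\, dy,
\]
and then apply $\pa_x^\gamma$ inside the integral. Since $G(z;\,\cdot\,)$ is a Schwartz function of its second argument (a Gaussian, by Equation \eqref{eq.def.G}) and its derivatives are uniformly controlled in $x$, differentiation under the integral sign is justified, yielding
\[
  \pa_x^\gamma e^{L_0^z} f(x) = \int_{\RR^N} (\pa_x^\gamma G)(z; x-y) f(y)\, dy.
\]
Hence the distribution kernel of $\pa_x^\gamma e^{L_0^z}$ is $(\pa_x^\gamma G)(z; x-y)$.

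Next I would handle the factor $(x-z)^\beta$. Since $z$ is a fixed parameter, this is simply a multiplication operator in the variable $x$. Multiplication on the left by a smooth function $m(x)$ takes an operator with kernel $K(x,y)$ to the operator with kernel $m(x) K(x,y)$, because
\[
  (m \cdot S) f(x) = m(x) \int K(x,y) f(y)\, dy = \int m(x) K(x,y) f(y)\, dy.
\]
Applying this with $m(x) = (x-z)^\beta$ and $K(x,y) = (\pa_x^\gamma G)(z; x-y)$ gives
\[
  T(x,y) = (x-z)^\beta (\pa_x^\gamma G)(z; x-y),
\]
as claimed. There is no real obstacle here; the only thing to remark is that $z$ plays the role of a parameter (not an integration variable), so both the prefactor $(x-z)^\beta$ and the $z$-dependence of $G$ are left untouched by the calculation. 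This is exactly the form needed for the pseudodifferential analysis carried out in the next subsection, where $z$ will later be allowed to depend on $(x,y)$.
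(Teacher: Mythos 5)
Your argument is correct and is precisely the ``direct computation'' that the paper's proof invokes without detail: differentiate under the integral sign against the Gaussian kernel $G(z;x-y)$ of $e^{L_0^z}$, then note that left multiplication by the smooth function $(x-z)^\beta$ multiplies the kernel in the $x$ variable. Nothing is missing, and your remark that $z$ is a fixed parameter at this stage matches the paper's own framing.
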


\begin{proof} The Lemma  follows from a direct computation.
\end{proof}

We will also need the following standard result.

\begin{lemma}\label{lemma.bounded}
\begin{enumerate}[(i)]
\item The Fourier transform in the second variable establishes an
isomorphism $\cF_2 : S^{-\infty} := S^{-\infty}(\RR^N \times \RR^N)
\to S^{-\infty}$.

\item Multiplication defines a continuous map $S^m_{(1, 0)} \times
S^{-\infty} \to S^{-\infty}$.

\item
If $\{a_{s}\}_{s\in (0,1]}$ is uniformly bounded in $S^{-\infty}$ and
$b_s(x, \xi) = a_s(x, s\xi)$, then the family $\{s^kb_{s}\}_{s\in (0,1]}$
is uniformly bounded in $S^{-k}_{1,0}$, $k \ge 0$.
\end{enumerate}
\end{lemma}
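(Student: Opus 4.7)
The plan is to handle the three parts separately, since each is a routine manipulation of the seminorms defining $S^m_{1,0}$ and $S^{-\infty}$.

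For part (i), I would unpack the definition: membership of $a$ in $S^{-\infty}$ is equivalent to the statement that for every multi-indices $\alpha, \beta$ and every $N \ge 0$, $\sup_{x,\xi}\langle\xi\rangle^N |\partial_x^\alpha \partial_\xi^\beta a(x,\xi)| < \infty$, so $a(x,\cdot)$ and all its derivatives decay faster than any polynomial in $\xi$, uniformly in $x$. Then the identities
\begin{equation*}
   \eta^\gamma \partial_\eta^\beta \cF_2 a(x,\eta)
   = (-i)^{|\gamma|+|\beta|}\cF_2\bigl(\partial_\xi^\gamma(\xi^\beta a)\bigr)(x,\eta),
\end{equation*}
combined with the elementary bound $\|\cF v\|_{L^\infty(\RR^N)} \le \|v\|_{L^1(\RR^N)}$, immediately convert the uniform-in-$x$ Schwartz bounds on $a$ into uniform-in-$x$ Schwartz bounds on $\cF_2 a$; taking also $\partial_x^\alpha$ behind $\cF_2$ handles the $x$-derivatives. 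Since $\cF_2$ is its own inverse (up to a constant and a sign), this gives a continuous bijection and hence an isomorphism.

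For part (ii), I would simply apply Leibniz: if $a \in S^m_{1,0}$ and $b \in S^{-\infty}$, then
\begin{equation*}
  \bigl|\partial_x^\alpha \partial_\xi^\beta (ab)\bigr|
  \le \sum_{\substack{\alpha'+\alpha''=\alpha\\ \beta'+\beta''=\beta}}
  C \langle\xi\rangle^{m-|\beta'|}\cdot C_N\langle\xi\rangle^{-N-|\beta''|}
  \le C_{\alpha,\beta,N}\,\langle\xi\rangle^{m-N-|\beta|},
\end{equation*}
for every $N$, so $ab \in \bigcap_M S^M_{1,0} = S^{-\infty}$, with continuous dependence from the seminorm inequalities just written.

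For part (iii), the chain rule gives $\partial_x^\alpha \partial_\xi^\beta b_s(x,\xi) = s^{|\beta|}(\partial_x^\alpha\partial_\xi^\beta a_s)(x,s\xi)$, so uniform boundedness of $\{a_s\}$ in $S^{-\infty}$ yields
\begin{equation*}
  \bigl|s^k\,\partial_x^\alpha \partial_\xi^\beta b_s(x,\xi)\bigr|
  \le C_{\alpha,\beta,N}\,s^{k+|\beta|}\,\langle s\xi\rangle^{-N},
\end{equation*}
for every $N$. To conclude that this is $\lesssim \langle\xi\rangle^{-k-|\beta|}$ uniformly in $s\in(0,1]$, I would split into the regimes $s|\xi|\le 1$ and $s|\xi|>1$. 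In the first regime, $\langle\xi\rangle \le \sqrt{2}/s$, so $s^{k+|\beta|}\langle\xi\rangle^{k+|\beta|}$ is bounded while $\langle s\xi\rangle \asymp 1$, and any $N\ge 0$ works. In the second regime, $\langle s\xi\rangle \ge s|\xi|$ and $s\langle\xi\rangle \le \sqrt{2}\,\langle s\xi\rangle$, so picking $N \ge k+|\beta|$ makes $s^{k+|\beta|}\langle\xi\rangle^{k+|\beta|}\langle s\xi\rangle^{-N}\lesssim \langle s\xi\rangle^{-(N-k-|\beta|)}\le 1 \lesssim \langle\xi\rangle^{-k-|\beta|}\cdot\langle\xi\rangle^{k+|\beta|}\langle s\xi\rangle^{-(N-k-|\beta|)}$. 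Arranging this bookkeeping cleanly (choosing $N$ as a function of $\alpha,\beta,k$ and tracking the constants) is the only mildly delicate point; no step presents a genuine obstacle, since $S^{-\infty}$ has so many derivative bounds available that we can absorb any fixed polynomial growth in $\langle\xi\rangle$ by paying with extra decay in $\langle s\xi\rangle^{-N}$.
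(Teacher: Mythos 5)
Your argument is correct: each part is exactly the ``straightforward calculation'' the paper invokes without detail, and your seminorm bookkeeping (the Fourier identities for part (i), Leibniz for part (ii), and the split into the regimes $s|\xi|\le 1$ and $s|\xi|>1$ with $N\ge k+|\beta|$ for part (iii)) carries it through without gaps. Nothing differs in substance from what the paper leaves to the reader.
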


\begin{proof} This follows from a straightforward calculation.
\end{proof}

For our main result, we require some assumptions on the dilation
center $z$.

\begin{definition} A function $z : \RR^{2N} \to \RR^N$ will be called
{\em admissible} if
\begin{enumerate}[(i)]
\item $z(x,x) = x$, for all $x \in \RR^N$.
\item All derivatives of $z$ are bounded.
\end{enumerate}
\end{definition}

A typical example is $z(x,y) = \lambda x + (1-\lambda)y$, for some fixed
parameter $\lambda$.  A simple application of the mean value theorem
gives that $\<z-x\> \le C \<y-x\>$ for some constant $C>0$.

We are now ready to state and prove the main result of this subsection.

\begin{theorem}  \label{thm.mainbound}
Let $\alpha \in \fA_{k,\ell}$, $k \le n$, $\alpha \le n$.  Assume that $z : \RR^{2N} \to \RR^N$ is admissible. 
Then there exists a uniformly bounded
family $\{a_{s}\}_{s\in (0,1]}$ in $S^{-\infty}$ such that,
if  $b_s(x, \xi) := a_s(x, s\xi)$, then
\begin{equation*}
  	\cL_{s,\alpha} = b_{s}(x, D).
\end{equation*}
\end{theorem}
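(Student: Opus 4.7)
The plan is to identify $\cL_{s,\alpha}$ as a pseudodifferential operator whose symbol is the partial Fourier transform of its kernel after a parabolic rescaling of the second variable. Combining Lemma \ref{lemma.explicit} with Lemma \ref{lemma.Lz}, the distribution kernel of $\Lambda_{\alpha,z}$ takes the explicit form $\sum_{\beta,\gamma} a_{\beta,\gamma}(z)(x-z)^{\beta}(\D^{\gamma} G)(z;x-y)$ with $a_{\beta,\gamma} \in \CIb(\RR^N)$; since $\alpha \in \fA_{k,\ell}$ with $k,\alpha_j \le n$, this kernel does not depend on $s$. Inserting the parabolic rescaling $x \mapsto z + s^{-1}(x-z)$, $y \mapsto z + s^{-1}(y-z)$ and setting $z = z(x,y)$ then gives
\begin{equation*}
  \cL_{s,\alpha}(x,y) = s^{-N}\sum_{\beta,\gamma} s^{-|\beta|} a_{\beta,\gamma}(z)(x-z)^{\beta} (\D^{\gamma} G)(z; s^{-1}(x-y))\Big|_{z=z(x,y)}.
\end{equation*}
I then propose the symbol
\begin{equation*}
  a_s(x,\xi) := s^N \int_{\RR^N} e^{-i\,\xi\cdot w}\,\cL_{s,\alpha}(x, x - sw)\,dw,
\end{equation*}
and verify by the change of variables $\eta = s\xi$ in \eqref{eq.def.psdo} that $\cL_{s,\alpha} = b_s(x,D)$ whenever $b_s(x,\xi) = a_s(x,s\xi)$.

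The core of the argument is to show that $\{a_s\}_{s\in(0,1]}$ is a bounded subset of $S^{-\infty}$, and this is where admissibility of $z$ is used decisively. I will introduce $\tilde\phi(x,w,s) := s^{-1}[z(x,x-sw) - x]$; since $z(x,x)=x$, the fundamental theorem of calculus gives
\begin{equation*}
  \tilde\phi(x,w,s) = -w\cdot\int_0^1 (\D_2 z)(x, x - tsw)\,dt,
\end{equation*}
which is smooth on $\RR^N \times \RR^N \times [0,1]$, has linear growth in $w$ uniform in $x$ and $s$, and satisfies $\D_w^{\alpha}\tilde\phi = O(s^{|\alpha|-1})$ for $|\alpha| \ge 1$ by the admissibility of $z$. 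Using $x - z = -s\tilde\phi$ and the Gaussian form $(\D^{\gamma} G)(\zeta;w) = P_{\gamma}(\zeta;w) e^{-w^{T} A(\zeta)^{-1} w/4}$ with $P_{\gamma}$ polynomial in $w$, one rewrites
\begin{equation*}
  s^N \cL_{s,\alpha}(x,x-sw) = \sum_{\beta,\gamma}(-1)^{|\beta|} a_{\beta,\gamma}(x + s\tilde\phi)\,\tilde\phi^{\beta}\,P_{\gamma}(x + s\tilde\phi; w)\,e^{-w^{T} A(x + s\tilde\phi)^{-1} w/4}.
\end{equation*}

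Each factor of this integrand is $\CIb$ in $(x,w)$ with bounds uniform in $s$, times a polynomial in $w$, times a Gaussian whose covariance is controlled above and below by the uniform strong ellipticity $L \in \bL_\gamma$; the Gaussian is therefore dominated by $e^{-c|w|^2}$ for a constant $c>0$ independent of $(x,w,s)$, so the integrand is Schwartz in $w$ with seminorms bounded uniformly in $(x,s)$ and $\CIb$ in $x$ uniformly in $s$. Since the Fourier transform is an isomorphism of $\cS$ (Lemma \ref{lemma.bounded}(i)), this yields the claimed uniform bound on $a_s$ in $S^{-\infty}$. The main technical obstacle is the bookkeeping required when verifying the Schwartz seminorms: the argument $\zeta = x + s\tilde\phi$ of $A(\zeta)^{-1}$ inside the Gaussian depends on $w$, and a priori its $w$-derivatives could destroy the rapid decay, but the bound $\D_w^{\alpha}\zeta = s \D_w^{\alpha}\tilde\phi = O(s^{|\alpha|})$ for $|\alpha| \ge 1$, which comes directly from admissibility, keeps this contribution controlled for $s \in (0,1]$.
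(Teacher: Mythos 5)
Your proposal is correct and follows essentially the same route as the paper's proof: express the kernel of $\Lambda_{\alpha,z}$ via Lemmas \ref{lemma.explicit} and \ref{lemma.Lz}, rescale, recover the symbol by a partial Fourier transform with the substitution $y\mapsto sw$, and use admissibility (through $s^{-1}\bigl[z(x,x-sw)-x\bigr]$ having order-one growth with uniformly bounded derivatives) to get the uniform $S^{-\infty}$ bound. Your treatment of the $w$-dependence of $z$ inside the Gaussian via $\tilde\phi$ is simply a more explicit version of the paper's appeal to symbol classes in $y$ and Lemma \ref{lemma.bounded}.
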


\begin{proof}
By Lemma \ref{lemma.explicit}, we have that $\Lambda_{\alpha, z}$ is a
finite sum of terms of the form $\varphi(z)(x-z)^\beta \pa_x^\gamma
e^{L_0^z}$ with $\varphi\in C^\infty_b$.
Let then $k_z(x, y)$ be the distribution kernel of
$a(z)(x-z)^\beta \pa_x^\gamma e^{L_0^z}$ and let
\begin{equation*}
  	K_s(x, y) := s^{-N}k_z(z + s^{-1}(x - z), z + s^{-1}(y-z)),\quad z =
  	z(x, y).
\end{equation*}
By abuse of notation, we shall denote also by $K_s$ the integral
operator defined by $K_s$. It is enough then to prove our theorem for
$K_s$. Namely, it is enough to show that there exists a uniformly
bounded family $\{a_{s}\}_{s\in (0,1]}$ in $S^{-\infty}$ such that
\begin{equation*}
  K_s = a_s(x, sD).
\end{equation*}

By lemma \ref{lemma.Lz}, we have that the distribution kernel of
$\pa_x^\gamma e^{L_0^z}$ is of the form $\psi(z, x-y)$
and belong to $S^{-\infty}$ as a function of $x-y$ for $z$ fixed.
(This is consistend with the fact that for each fixed
$z$, $\pa_x^\gamma e^{L_0^z}$ is a convolution operator.) More
precisely $\psi(z, x)$ is  $\cF_2(i\, \xi)^\gamma e^{-\xi^T \cdot A(z) \cdot \xi}$.
This observation implies
\begin{multline*}
  	K_s(x, y) = \varphi(z(x, y)) s^{-|\beta|-N} (x - z(x, y))^{\beta} \psi(z(x, y),
  	s^{-1}(x-y)) =: \\ \varphi(z) s^{-|\beta|-N} (x - z)^{\beta} \psi(z,
  	s^{-1}(x-y)), \quad z = z(x,y).
\end{multline*}

We then let
\begin{equation*}
  	b_s(x, \xi) = \int_{\RR^N} e^{-i\, y \cdot \xi} \phi(z)
   	s^{-|\beta|-N} (x - z)^{\beta} \psi(z,
	s^{-1} y) dy, \quad z = z(x,x-y).
\end{equation*}
Next, we observe that if we  change variables from $y$ to  $sy$, we can write \
$b_s(x, \xi) = a_s(x, s\xi)$, where
\begin{equation*}
  	a_s(x, \xi) = \int_{\RR^N} e^{-i\, y \cdot \xi} \phi(z)
   	s^{-|\beta|} (x - z)^{\beta} \psi(z, y) dy, \quad z = z(x,x-sy).
\end{equation*}
We need to show that $a_s$ is a bounded family in $S^{-\infty}$.
To this end, we observe that, since $\varphi \in C^\infty_b$ and the derivatives of $z$
are all bounded, $\varphi(z) \in S^1_{1, 0}$ as a function of $y$ for each $x$. Similarly,
for each $j=1,\dots,N$, $s^{-1}(x_j - z_j(x, x-sy)) \in S^1_{1,0}$ as a function of $y$
for fixed $x$,  and collectively they form bounded families for $s \in (0, 1]$. Lastly,
from what already observed above, $\psi(z,y) \in S^{-\infty}$ as a function of $y$ for
each fixed $x$. Therefore, $a_s\in S^{-\infty}$ uniformly in $s$  by Lemma \ref{lemma.bounded}.
The proof is complete.
\end{proof}

We now obtain the desired refined mapping property estimate by standard results on
pseudodifferential operators. Below, $t=s^2$.

\begin{theorem}\label{theorem.refined}
Let $\alpha \in \fA_{k,\ell}$, $k \le n$, $\alpha_j \le n$.
Assume that $z : \RR^{2N} \times \RR^N$ is admissible.
Then for any $1<p<\infty$, any $r\in \RR$,
\begin{equation}
   t^{k/2} \|\cL_{s, \alpha}\|_{W^{r,p} \to W^{r+k,p}} \le C_{k, r, p},
\end{equation}
for a constant $C_{k, r, p}$ independent of $t \in (0, 1]$.
\end{theorem}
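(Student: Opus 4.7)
The plan is to combine Theorem \ref{thm.mainbound} with the standard $L^{p}$-Sobolev mapping property of pseudodifferential operators in the H\"ormander class $S^{-k}_{1,0}$. In essence, all the hard work has already been done in Theorem \ref{thm.mainbound}, which realized $\cL_{s,\alpha}$ as a scaled pseudodifferential operator with a $S^{-\infty}$-symbol; what remains is to extract the correct power of $s$ and invoke calculus.

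First I would apply Theorem \ref{thm.mainbound}: since $\alpha \in \fA_{k,\ell}$ with $k \le n$ and each $\alpha_j \le n$, we may write $\cL_{s,\alpha} = b_{s}(x,D)$ where $b_{s}(x,\xi) = a_{s}(x, s\xi)$ and $\{a_{s}\}_{s\in(0,1]}$ is a uniformly bounded family in $S^{-\infty}(\RR^N \times \RR^N)$. The parameter $z = z(x,y)$ has been absorbed into the symbol using admissibility of $z$.

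Next I would apply part (iii) of Lemma \ref{lemma.bounded} with the particular integer $k$ appearing in the multi-index length of $\alpha$. The conclusion is that the scaled family $\{s^{k} b_{s}\}_{s\in(0,1]}$ is uniformly bounded in the H\"ormander class $S^{-k}_{1,0}(\RR^N \times \RR^N)$, so that finitely many of the seminorms $p_{-k,\beta,\gamma}(s^{k}b_{s})$ are controlled by constants depending only on $k$, $\beta$, $\gamma$ and independent of $s \in (0,1]$.

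Finally I would invoke the classical $L^{p}$-boundedness of pseudodifferential operators (see for instance \cite{Tay, TayPDEII}): every operator with symbol in $S^{-k}_{1,0}$ defines a continuous map $W^{r,p}(\RR^{N}) \to W^{r+k,p}(\RR^{N})$ for all $r \in \RR$ and $1 < p < \infty$, with operator norm dominated by a finite number of the seminorms $p_{-k,\beta,\gamma}$. Applying this to $s^{k} b_{s}(x,D) = s^{k}\cL_{s,\alpha}$ yields
\[
	s^{k}\,\|\cL_{s,\alpha}\|_{W^{r,p}\to W^{r+k,p}} \;\le\; C_{k,r,p},
\]
uniformly in $s \in (0,1]$. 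Since $s = t^{1/2}$, this is precisely the asserted bound. There is no genuine obstacle at this stage: the substantive analytic input was Theorem \ref{thm.mainbound} (which used the admissibility of $z$ in a critical way), and the calculation of the $S^{-k}_{1,0}$ order via the rescaling in Lemma \ref{lemma.bounded}(iii); the present theorem is essentially a packaging of these two facts through continuous symbol calculus.
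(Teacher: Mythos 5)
Your proposal is correct and is exactly the argument the paper intends: Theorem \ref{thm.mainbound} gives $\cL_{s,\alpha}=b_s(x,D)$ with $b_s(x,\xi)=a_s(x,s\xi)$ and $\{a_s\}$ bounded in $S^{-\infty}$, Lemma \ref{lemma.bounded}(iii) then puts $\{s^k b_s\}$ uniformly in $S^{-k}_{1,0}$, and the standard $W^{r,p}\to W^{r+k,p}$ boundedness of such operators (with norm controlled by finitely many symbol seminorms) yields the uniform bound with $t^{k/2}=s^k$. This matches the paper's (implicit) proof, which simply invokes these same "standard results on pseudodifferential operators."
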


\subsection{Rough estimates}
We now move to study the mapping properties of $\Lambda_{\alpha, z}$
when either $\alpha \in \fA_{n+1, \ell}$ or some $\alpha_i =n+1$. In this case,
the operators $\Lambda_{\alpha, z}$ depend on $s$ also, although this dependence is not
shown in the notation.

The mapping properties that we establish in this subsection
will allow us to obtain corresponding mapping properties for the error operator
$\bE_{n,n}^{s,z}$, which is not immediately in the form of a
pseudodifferential operator. Consequently, we are not able to derive
bounds as those in Theorem \ref{thm.mainbound} above. Nevertheless,
the bounds we derive are sufficient to
establish the {\em sharp} error estimates as $t\to 0^+$ in weighted Sobolev spaces for
the overall approximation, given in Theorem \ref{theorem.main1}. This result is achieved
by choosing judiciously  an $n$ large enough.

As before we denote $W^{r,p}_{a} = W^{r,p}_{a, w}$ as before, where $w$
is the center of the weight $\<x\>_w = \<x-w\>=\<w-x\>$ used to define the
exponentially weighted Sobolev spaces  (see equation \eqref{def.w.S}). We shall also write
$L^p_a = W^{0,p}_a$. The main result of this section is the following proposition.

\begin{proposition}\label{prop.rough} Assume that $z : \RR^{2N} \to \RR^N$ is admissible.
For any $\alpha$, any $1<p<\infty$, $k\in \ZZ_+$, $r\ge 0$, and $a \in \RR$,
\begin{equation}
   	s^{k} \|\cL_{s, \alpha}\|_{L^{p}_{a} \to W^{k,p}_{a}} \le C_{k, p},
\end{equation}
for a constant $C_{k, p}$ independent of $s \in (0, 1]$, of $a$ in
a bounded set, and independent of the center of the weight that defines
the weighted Sobolev spaces $W^{k,p}_a$.
\end{proposition}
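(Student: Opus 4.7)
The strategy is to exploit the scaling structure of $\cL_{s,\alpha}$ together with the Gaussian-type decay of the kernel of $\Lambda_{\alpha,z}$, converting each $x$-derivative applied to $\cL_{s,\alpha}f$ into a factor of $s^{-1}$ via integration by parts in a rescaled integration variable.

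First I would reduce to the case of trivial weight ($a=0$). Since the family $e^{a\<x\>_w} L e^{-a\<x\>_w}$ is bounded in $\bL_\gamma$ uniformly for $a$ in a bounded set and $w \in \RR^N$ arbitrary, and the construction of $\Lambda_{\alpha,z}$ from $L$ only involves operations stable under such conjugation, it suffices to prove the estimate with constants independent of the conjugating weight. Equivalently, writing $M_a$ for multiplication by $e^{a\<x\>_w}$, the kernel of $M_a \cL_{s,\alpha} M_{-a}$ equals that of $\cL_{s,\alpha}$ times the pointwise factor $e^{a(\<x\>_w - \<y\>_w)}$, bounded by $e^{|a||x-y|}$ uniformly in $w$. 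Under the rescaling $y = x+su$ with $s \le 1$, this becomes $e^{|a|s|u|}$, which is absorbed by the Gaussian decay in $u$ established below.

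Next I would extract the scaling structure of the kernel. For $\alpha \in \fA_{k,\ell}$ with $k \le n$ and each $\alpha_j \le n$, Corollary \ref{cor.Gmuexplicit} already gives $\Lambda_{\alpha,z}(u,v) = \mathfrak{P}^\ell(z,u,v) G(z;u-v)$ with $\mathfrak{P}^\ell$ polynomial of bounded degree in $u-z$ and $u-v$ whose coefficients lie in $\CIb$ as functions of $z$; this form, and hence uniform Gaussian decay, is preserved by arbitrary $u$-, $v$- and $z$-derivatives thanks to the uniform strong ellipticity. For $\alpha$ with $k = n+1$ or some $\alpha_j = n+1$, I would iterate Lemma \ref{lemma.finite-BCH} to push polynomial-coefficient factors past the intermediate semigroups $e^{\tau L_0^z}$, reducing $\Lambda_{\alpha,z}$ to a bounded-order differential operator with polynomial (in $x-z$) coefficients applied to $e^{L_0^z} e^{\sigma L^{s,z}}$; uniform-in-$s$ Gaussian decay of the composite kernel then follows from Proposition \ref{prop.mapgeneral} combined with Lemma \ref{lemma.poly.growth}. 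Substituting $u = z + s^{-1}(x-z)$, $v = z + s^{-1}(y-z)$, $z = z(x,y)$ and using admissibility (which yields $|x - z(x,y)| + |y - z(x,y)| \le C|x-y|$), the kernel of $\cL_{s,\alpha}$ takes the form $K_s(x,y) = s^{-N} H_s(x,(y-x)/s)$, where $H_s(x,u)$ and all its $x$- and $u$-derivatives have Gaussian decay in $u$ uniformly in $s \in (0,1]$.

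With the change of variable $y = x+su$, $\cL_{s,\alpha} f(x) = \int H_s(x,u) f(x+su)\,du$. Any $x$-derivative falling on $f(x+su)$ produces $(\partial f)(x+su) = s^{-1}\partial_u[f(x+su)]$, and an integration by parts in $u$ transfers the $u$-derivative onto $H_s$ while keeping the factor $s^{-1}$. Iterating one obtains
\begin{equation*}
\partial_x^\beta \cL_{s,\alpha} f(x) = \sum_{\gamma \le \beta} c_\gamma\, s^{-|\gamma|} \int (\partial_x^{\beta-\gamma}\partial_u^\gamma H_s)(x,u)\, f(x+su)\, du,
\end{equation*}
so a Schur-type estimate gives $\|\partial_x^\beta \cL_{s,\alpha} f\|_{L^p} \le C s^{-|\beta|}\|f\|_{L^p}$, with constants supplied by the uniform $L^1$-in-$u$ bound on the $\partial_x^{\beta-\gamma}\partial_u^\gamma H_s$. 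Summing over $|\beta| \le k$ and combining with the weight reduction yields the claim, with constants independent of $s \in (0,1]$, of $a$ in a bounded set, and of $w$. The main obstacle is establishing the uniform-in-$s$ Gaussian decay of $H_s$ and all its derivatives in the $s$-dependent case: $e^{\tau L^{s,z}}$ has no explicit Gaussian kernel, so one must rely on the delicate composition argument sketched above, combining the commutator reductions of Section \ref{sec.commutator} with the a priori semigroup bounds of Proposition \ref{prop.mapgeneral}.
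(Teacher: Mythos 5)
Your overall skeleton --- conjugating away the exponential weight, bounding the dilated kernel and its derivatives so that each $x$-derivative costs a factor $s^{-1}$, and concluding with a Schur/Riesz-type test --- is the same as the paper's, and your treatment of the $s$-\emph{independent} terms via the explicit Gaussian form of Corollary \ref{cor.Gmuexplicit} is sound. The gap is in the $s$-\emph{dependent} terms ($k=n+1$, or some $\alpha_j=n+1$), which are exactly the ones this proposition is needed for (the $s$-independent ones are already covered by the refined estimates of Theorem \ref{theorem.refined}). You claim that, after commuting the $L^{z}_{\alpha_j}$ past the intermediate factors $e^{\tau L_0^{z}}$, the ``uniform-in-$s$ Gaussian decay of the composite kernel follows from Proposition \ref{prop.mapgeneral} combined with Lemma \ref{lemma.poly.growth}.'' It does not: Proposition \ref{prop.mapgeneral} only gives operator norms between Sobolev spaces with a \emph{fixed} weight, and such bounds carry no pointwise off-diagonal decay of the kernel of $e^{(1-\sigma)L_0^{z}}e^{\sigma L^{s,z}}$ at all. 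Uniform Gaussian upper bounds for $e^{\sigma L^{s,z}}$ (Aronson-type estimates) are true but are neither proved nor quoted in the paper and cannot be extracted from the results you cite; note also that Lemma \ref{lemma.finite-BCH} cannot be used to move anything past $e^{\sigma L^{s,z}}$, since $L^{s,z}$ does not have constant coefficients, so this composite factor is unavoidable.

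The paper closes exactly this point by a different, and weaker but sufficient, decay mechanism, uniform over \emph{all} $\alpha$ at once: by Lemma \ref{cor.exp.growth} and Corollary \ref{cor.wel.def}, each $\Lambda_{\alpha,z}$ --- including the $s$-dependent ones --- is bounded between exponentially weighted Sobolev spaces with an \emph{arbitrary} weight center $w$, with norm controlled by $e^{k\epsilon\langle z-w\rangle}$ uniformly in $z$ and $s$. Pairing against $\partial^{\beta}\delta_x$ and $\partial^{\beta'}\delta_y$, which lie in $H^{-q}_a$ for $q>N/2+|\beta|$, converts these operator bounds into pointwise bounds of the form $|\partial_x^{\beta}\partial_z^{\beta'}\partial_y^{\beta''}\Lambda_{\alpha,z}(x,y)|\le C\,e^{\epsilon\langle x-z\rangle-(a+\epsilon)\langle y-x\rangle}$: exponential, not Gaussian, decay is all the Schur test needs. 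The chain rule applied to the dilated kernel, together with $\langle x-z(x,y)\rangle\le C\langle x-y\rangle$ from admissibility, then yields the $s^{-N-|\gamma|}$ bounds and the proposition. If you wish to keep your route you would have to prove uniform-in-$s$ (at least exponential) off-diagonal bounds for the kernel of $e^{\sigma L^{s,z}}$ separately, which essentially forces the same weighted-conjugation argument. A smaller repairable slip: your integration-by-parts formula treats $\partial_x^{\beta-\gamma}H_s$ as uniformly bounded in $s$, but differentiating the rescaled argument $z+s^{-1}(x-z)$ produces factors $s^{-1}\big(I-\partial_x z(x,x+su)\big)$; these are only harmless (of size $O(|u|)$) because $z(x,x)=x$ and the derivatives of $z$ are bounded, and this use of admissibility should be made explicit.
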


\begin{proof}
The proof is based on explicit kernel estimates and Riesz' lemma.
By replacing the operator $L$
with $e^{a\<x-w\>} L e^{-a\<x-w\>}$, where $w$ is the center of the
weight, we can assume that $a=0$, as before.

As before,  $\Lambda_{\alpha, z}(x, y)$ is the smooth distribution kernel
of the operator $\Lambda_{\alpha, z}$. For any given point $v\in \RR^N$, we denote by
$\delta_{v}^{\beta}$ the distribution defined by
$\delta_{v}^{\beta}(f) = \D^\beta f(v)$ (we agree that $\delta_v^0(f)
= f(v)$.  Then
\begin{equation}\label{ker.est}
    	\partial_x^\beta \partial_y^{\beta'} \partial_z^{\beta''}
	\Lambda_{\alpha, z}(x, y) = \< \delta_x^\beta\ , \
	(\partial_z^{\beta''} \Lambda_{\alpha,z}) (\delta^{\beta'}_y)
	\>,
\end{equation}
where $\<,\>$ is the usual duality pairing.
Since all the coefficients (and their derivatives) of $L$ are uniformly bounded,
the derivative $\partial_z^\beta \Lambda_{\alpha,z}$ will satisfy
the same mapping properties as $
\Lambda_{\alpha,z}$.  Furthermore, for each
multi-index $\beta$, $\partial^\beta \delta_y \in H^{-q}(\RR^N)$ for $q
> N/2 + |\beta|$ and has norm independent of $y$.

In the rest of the proof, we  use the weighted Sobolev spaces
introduced in \eqref{def.w.S}. We recall that the mapping properties
between these spaces are uniform in term of the base point. We can
therefore choose the weight center at $x$ in estimating \eqref{ker.est}.
We will write $H^s_a = W^{s,2}_{a,x}$.  Then $\delta_y\in H^{-q}_a$ for all $a\in
\RR$, $q > N/2 + |\beta|$,  with
\begin{equation*}
  \|\partial^\beta \delta_y\|_{H^{-q}_{a}} :=
  \|e^{a<y-x>}\partial^\beta \delta_y\|_{H^{-q}} \le C_{q,\alpha}
  e^{(a+ \epsilon)  \<y-x\>}.
\end{equation*}

Next, we pick an  $\epsilon>0$ small enough.
Replacing $\epsilon$ with
$\epsilon/k$, where $k$ is such that $\alpha\in \mathfrak{U}_{k,\ell}$ in Corollary \ref{cor.wel.def} yields
$$
\|\partial_z^{\beta}
\Lambda_{\alpha,z}
\, \|_{H^{-q}_{-a} \to H^{q}_{-a- \epsilon/k}} \| \le e^{\epsilon \<x-z\>},
$$
and hence
\begin{multline}\label{eq.mult.abc}
	\big|\partial_x^{\beta} \partial_z^{\beta'}
	\partial_y^{\beta''} \Lambda_{\alpha,z}(x, y)\big| = \big| \<
	\partial^{\beta} \delta_x\, ,
	\partial_z^{\beta'}\Lambda_{\alpha,z}\, \partial^{\beta''}
	\delta_y \> \big| \\ \le C \|\partial^{\beta}
	\delta_x\|_{H^{-q}_{-a-\epsilon}/k}
	\|\partial_z^{\beta'} \Lambda_{\alpha,z}
	\,\|_{H^{-q}_{-a}\to H^{q}_{-a-\epsilon/k}} \|\partial^{\beta''}
	\delta_y\|_{H^{-q}_{-a}}\\  \le C e^{\epsilon \<x-z\> -
	(a+\epsilon)\<y-x\>},
\end{multline}
where $q>N/2+\max(|\beta|,|\beta'|,|\beta''|)$.

We will employ the bounds above to estimate
\begin{equation}\label{eq.error4.1}
  	\cL_{s,\alpha} (x,y) = s^{-N} \Lambda_{\alpha, z}
	(z+s^{-1}(x-z), z + s^{-1}(y-z)), \quad z = z(x,y).
\end{equation}
We first use the chain rule to conclude that, if $\gamma$ is any
multi-index, then $\pa_x^\gamma
\cL_{s,\alpha} (x,y)$ is a sum of
terms of the form
\begin{equation*}
	s^{-j} \partial_x^{\beta} \partial_z^{\beta'}
	\partial_y^{\beta''}
	\Lambda_{\alpha,z}(z+s^{-1}(x-z), z + s^{-1}(y-z)) P,
\end{equation*}
for appropriate multi-indices $\beta$, $\beta'$, and $\beta''$,
with $P$ a product of factors of the form $\partial^{\alpha'}z$ and $j
\le |\gamma|$.  Our assumptions on $z$ imply that $p$ is
bounded. Using also Equation \eqref{eq.mult.abc}, we obtain for $\epsilon$ sufficiently small,
\begin{multline}\label{eq.error4.2}
  	\big| \pa_x^\gamma \cL_{s,\alpha} (x,y) \big| \le C
	s^{-N-|\gamma| } \ e^{\epsilon \<s^{-1}(x-z)\> -
	a\<s^{-1}(y-x)\>} \\ \le C s^{-N-|\gamma| } \ e^{- a
	\<s^{-1}(y-x)\>/2},  \qquad z = z(x,y),
\end{multline}
where the last inequality follows from $\<x-z\> \le C \<y-x\>$.  {From}
this inequality, we obtain after the change of variables $v = s^{-1}(y-x)$
\begin{equation*}
  \int_{\RR^N} \big|\pa_x^\gamma \cL_{s,\alpha} (x,y) \big| dy \le
  C_a s^{-|\gamma|}, \quad \forall x \in \RR^N,
\end{equation*}
and
\begin{equation*}
  	\int_{\RR^N} \big|\pa_x^\gamma \cL_{s,\alpha} (x,y) \big| dx \le
  	C_a s^{-|\gamma|}, \quad \forall y \in \RR^N,
\end{equation*}
These two estimates together with Riesz Lemma give that the map $f \to
s^{|\gamma|} \pa_x^\gamma \cL_{s,\alpha} f$ is bounded from $L^p$ to
$L^p$, which is enough to establish the result.
\end{proof}

This proposition,
and the definition of
$\bE_{n,n}^{s,z}$ immediately imply the following lemma, where as usual $t=s^2$.

\begin{lemma}\label{lemma.rough}
Assume that $z : \RR^{2N} \times \RR^N$ is admissible, then
for each $r\in \RR$, $q>0$, we have
\begin{equation*}
  	\| \cE_t^{[n,z]}\|_{W^{r,p} \to W^{r+q	,p}} \le C_T \, t^{-(r+q)/2}, \qquad t
  \in (0,T].
\end{equation*}
\end{lemma}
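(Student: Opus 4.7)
The plan is to expand $\cE_t^{[n,z]}$ via equation \eqref{eq.lemma.rough} specialized to $\mu = n$:
\[
\cE_t^{[n,z]} \;=\; \sum_{\ell = n+1}^{(n+1)^2} \sum_{k,\, \alpha \in \fA_{k,\ell}} s^{\ell + N - n - 1}\, \cL_{s,\alpha}, \qquad s = \sqrt{t},
\]
and then bound each operator $\cL_{s,\alpha}$ termwise using Proposition \ref{prop.rough}. Two features of this expansion are essential: the sum has only finitely many summands (bounded in terms of $n$), and the smallest power of $s$ appearing is $s^N$, achieved at $\ell = n+1$.

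First I would treat the case $r \ge 0$. Setting $m := \lceil r + q \rceil \in \ZZ_+$ and using the continuous embeddings $W^{r,p} \hookrightarrow L^p$ (valid for $r \ge 0$) and $W^{m,p} \hookrightarrow W^{r+q,p}$ (valid for $m \ge r + q$) together with Proposition \ref{prop.rough}, I obtain
\[
\|\cL_{s,\alpha}\|_{W^{r,p} \to W^{r+q,p}} \;\le\; C\,\|\cL_{s,\alpha}\|_{L^p \to W^{m,p}} \;\le\; C\, s^{-m}.
\]
Since $m \le r + q + 1$ and $N \ge 1$, the worst term in the sum has prefactor $s^{N-m}$, which satisfies $s^{N-m} \le s^{-(r+q)}$ for $s \in (0,1]$, yielding the claim in this case after summing the finitely many terms.

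For $r < 0$ I would argue by duality. The kernel of the adjoint $\cL_{s,\alpha}^*$ is $\overline{\cL_{s,\alpha}(y,x)}$ and satisfies the same decay and derivative estimates used in the proof of Proposition \ref{prop.rough}, since those bounds depend only on $\<s^{-1}(y-x)\>$ and are invariant under the exchange $x \leftrightarrow y$. Hence Proposition \ref{prop.rough} applies to $\cL_{s,\alpha}^*$ as well. Using the identity $\|T\|_{W^{r,p} \to W^{r+q,p}} = \|T^*\|_{W^{-(r+q),p'} \to W^{-r,p'}}$, the subrange $r \le -q$ reduces to the $r \ge 0$ case applied to $\cL_{s,\alpha}^*$. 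The intermediate range $-q < r < 0$ then follows by complex interpolation between the endpoint bounds at $r = 0$ and $r = -q$. The range $t \in [1,T]$ is absorbed into the constant $C_T$ via Proposition \ref{prop.mapgeneral}, which gives that both $e^{tL}$ and $\cG_t^{[n,z]}$ are uniformly bounded on the relevant Sobolev spaces for $t$ in this compact interval.

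The main technical obstacle is the extension of Proposition \ref{prop.rough}, which is formulated only for $L^p_a \to W^{k,p}_a$ mappings, to $W^{r,p} \to W^{r+q,p}$ bounds for arbitrary real $r$ and real $q > 0$. The duality-and-interpolation scheme above overcomes this by leveraging the $x \leftrightarrow y$ symmetry of the underlying kernel estimates, with the factor $s^N$ coming from $\ell \ge n+1$ providing exactly the slack needed to match the claimed exponent $-(r+q)/2$. Although the resulting bound is deliberately rough, when combined with the additional factor $s^{n+1-\mu}$ that appears in \eqref{eq.two.three}, it yields the sharp error estimate of Theorem \ref{theorem.main1} for $n$ chosen large enough in terms of $\mu$ and $N$.
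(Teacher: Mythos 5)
Your treatment of the case $r\ge 0$ is correct and is essentially the paper's own argument: expand $\cE_t^{[n,z]}$ through Equation \eqref{eq.lemma.rough} with $\mu=n$, bound each of the finitely many operators $\cL_{s,\alpha}$ by Proposition \ref{prop.rough} together with the embeddings $W^{r,p}\hookrightarrow L^p$ and $W^{m,p}\hookrightarrow W^{r+q,p}$, and use the slack $s^{N}$ coming from $\ell\ge n+1$ to absorb the loss from replacing $r+q$ by the integer $m=\lceil r+q\rceil$ (the paper invokes interpolation at this point rather than the embedding, which makes no difference given that slack).

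The extension to $r<0$, however, does not prove the stated bound. Duality gives $\|\cL_{s,\alpha}\|_{W^{r,p}\to W^{r+q,p}}=\|\cL_{s,\alpha}^{*}\|_{W^{-(r+q),p'}\to W^{-r,p'}}$, and running your $r\ge 0$ argument on the adjoint (domain index $\rho=-(r+q)\ge 0$, target index $\rho+q=-r$) produces the bound $C\,s^{-\lceil -r\rceil}$ per term, hence $\|\cE_t^{[n,z]}\|_{W^{r,p}\to W^{r+q,p}}\le C\,s^{\,N-\lceil -r\rceil}$ after including the prefactor $s^{N}$. This matches the claimed rate $s^{-(r+q)}$ only when $N+q+r\ge \lceil -r\rceil$, i.e.\ roughly for $r\gtrsim -(N+q)/2$; for more negative $r$ it is strictly weaker (for example $N=1$, $q=1$, $r=-3$ gives $s^{-2}=t^{-1}$ versus the asserted $t^{+1}$). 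In particular, unless $q\le N$, even the endpoint $r=-q$, where the lemma asserts a uniform bound, only comes out as $s^{N-\lceil q\rceil}$, so the interpolation you propose for $-q<r<0$ does not in general have the endpoint it needs. The obstruction is structural: the termwise estimates available (Proposition \ref{prop.rough}, or the symbol bounds behind Theorem \ref{theorem.refined}) give $W^{r,p}\to W^{r+q,p}$ norms for the dilated operators that are essentially independent of $r$, so no argument built from them can yield a decay rate that improves linearly in $|r|$ as the claimed exponent $-(r+q)/2$ does. To be fair, the paper's own proof (inclusion $W^{r,p}\hookrightarrow L^p$ for $r\ge0$ plus interpolation) also covers only $r\ge 0$, and only that range is used downstream (Theorem \ref{theorem.rough} enters Theorem \ref{theorem.main1} with $m\ge 0$, $k\in\ZZ_+$); but since you explicitly claim all of $r\in\RR$, the negative range remains a genuine gap in your write-up. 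A minor further point: for $t\in[1,T]$ you invoke Proposition \ref{prop.mapgeneral} also for $\cG_t^{[n,z]}$, which that proposition does not cover (it concerns $e^{tL}$ only); the cleaner fix is to observe that the kernel estimates behind Proposition \ref{prop.rough} hold uniformly for $s$ in any bounded interval $(0,\sqrt{T}]$.
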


\begin{proof}
Indeed, this follows from Proposition \ref{prop.rough}, Equation
\eqref{eq.lemma.rough}, and the continuous inclusion $W^{r,p}\hookrightarrow
L^p$, $r\geq 0$. For $r$ noninteger we also use interpolation.
\end{proof}

We note that in the above proposition we have an additional factor of $t^{-q/2}$
compared with the refined estimates of Theorem \ref{theorem.refined}.
This extra factor will not affect the final result, however, provided the order $n$
of the Taylor expansion of $L$ is chosen sufficiently large.

Then, the lemma leads to the following more precise estimate for the error operator $\cE_{t}^{[\mu,z]}$.

\begin{theorem}\label{theorem.rough} Assume that $z : \RR^{2N} \to \RR^N$ is admissible, then we have
\begin{equation*}
  	\| \cE_{t}^{[\mu,z]}\|_{W^{r,p} \to W^{r+k,p}} \le C_T\, t^{- k/2},
	\qquad t \in (0,T].
\end{equation*}
\end{theorem}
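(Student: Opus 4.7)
The plan is to exploit the splitting \eqref{eq.two.three}, which expresses
\[
\cE_t^{[\mu,z]} \;=\; \sum_{\ell=\mu+1}^{n} s^{\ell-\mu-1} \sum_{1\le k'\le \ell}\sum_{\alpha\in\fA_{k',\ell}} \cL_{s,\alpha} \;+\; s^{n+1-\mu}\,\cE_t^{[n,z]},
\]
where we have relabeled the iteration-level index as $k'$ to avoid clashing with the target loss $k$. The two pieces are handled by two different tools: the refined pseudodifferential bound of Theorem \ref{theorem.refined} for the finite sum, and the coarser kernel bound of Lemma \ref{lemma.rough} for the remainder. The free parameter is the Taylor order $n$, which we will choose large enough to absorb the extra loss in the rough estimate.

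For the finite sum, each $\alpha\in\fA_{k',\ell}$ with $\ell\le n$ automatically satisfies $k'\le\ell\le n$ and $\alpha_j\le\ell\le n$, so Theorem \ref{theorem.refined} yields
\[
\|\cL_{s,\alpha}\|_{W^{r,p}\to W^{r+k,p}}\le C_{k,r,p}\,t^{-k/2}.
\]
The prefactor $s^{\ell-\mu-1}=t^{(\ell-\mu-1)/2}$ is bounded by $T^{(\ell-\mu-1)/2}$ on $(0,T]$, since $\ell\ge \mu+1$. Because the number of triples $(\ell,k',\alpha)$ involved is finite (dominated by $\sum_{\ell=\mu+1}^n 2^{\ell-1}$ via Proposition \ref{prop.size.A.ell}), summing and applying the triangle inequality shows that this part of $\cE_t^{[\mu,z]}$ is bounded by $C\,t^{-k/2}$ in $\cB(W^{r,p},W^{r+k,p})$.

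For the remainder $s^{n+1-\mu}\cE_t^{[n,z]}$, Lemma \ref{lemma.rough} (with $q=k$) provides
\[
\|\cE_t^{[n,z]}\|_{W^{r,p}\to W^{r+k,p}}\le C_T\,t^{-(r+k)/2},
\]
so that the remainder contributes $C_T\,t^{(n+1-\mu)/2}\cdot t^{-(r+k)/2}=C_T\,t^{(n+1-\mu-r-k)/2}$. We now choose $n\ge \mu+r-1$, which is permissible because the operators $\Lambda_{\alpha,z}$ appearing in the finite sum, and hence its norm bound, are independent of $n$ once $n\ge\ell$. With this choice $(n+1-\mu-r)/2\ge 0$, and the remainder is bounded by $T^{(n+1-\mu-r)/2}\,t^{-k/2}$ on $(0,T]$.

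Combining the two contributions yields the theorem. The only genuine subtlety is the mismatch between the two estimates: the rough bound loses an extra factor $t^{-r/2}$ relative to the refined one (coming from the crude use of $W^{r,p}\hookrightarrow L^p$ inside the proof of Lemma \ref{lemma.rough}), but this loss is harmless because we are free to enlarge $n$ as much as we need without affecting the finite-sum portion. No further estimates beyond Theorem \ref{theorem.refined} and Lemma \ref{lemma.rough} are required.
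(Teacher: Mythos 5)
Your proof is correct and follows essentially the same route as the paper: it starts from the splitting \eqref{eq.two.three}, bounds the finite sum term by term with the refined pseudodifferential estimate of Theorem \ref{theorem.refined}, bounds the remainder with Lemma \ref{lemma.rough}, and absorbs the extra $t^{-r/2}$ loss by choosing $n+1\ge\mu+r$, exactly as in the paper. The only difference is cosmetic: you relabel the iteration-level index to avoid the clash with the regularity gain $k$, which the paper's own display leaves implicit.
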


\begin{proof} Let us chose $n +1 \ge \mu + r$ and $t = s^2$, as usual. Then
Theorems \ref{theorem.refined} and \ref{theorem.rough} applied to
Equation \eqref{eq.two.three} give
\begin{multline*}
  	\|\cE_{t}^{[\mu,z]}\|_{W^{r,p} \to W^{r+k,p}} \le
	\sum_{\ell = \mu + 1}^{n} s^{\ell-\mu-1}
  	\sum_{k = \mu+1}^{\ell} \sum_{\alpha \in \fA_{k, \ell}}
  	\|\cL_{\alpha, z}\|_{W^{r,p} \to W^{r+k,p}}\\  + s^{n+1-\mu}
	\|\cE_{t}^{[n,z]}\|_{W^{r,p} \to W^{r+k,p}}
  	\le Cs^{-k} (1  + s^{n+1-\mu}s^{-r-k}) \le Cs^{-k} .
\end{multline*}
\end{proof}

This completes the proof of Theorem \ref{theorem.main1}.

{From} \eqref{eq.main.def}, we immediately obtain the following property
on the principal part of the asymptotic expansion.

\begin{corollary} \label{cor.maintermbound}
Assume that $z : \RR^{2N} \to \RR^N$ is admissible.  For each
$1<p<\infty$, $r\in \RR$, $\mu \ge 0$, and any $f\in W^{r,p}_{a}$ let
us define
\begin{equation*}
    	\cG^{[\mu,z]}_t f(x) := \int_{\RR^N} \cG^{[\mu,z]}_t(x,y)
        f(y)\, dy,
 \end{equation*}
then $\cG^{[\mu,z]}_t f \to f$ in $W^{r,p}_{a}$ for $t \to 0_+$.
\end{corollary}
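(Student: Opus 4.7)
The plan is to read off the convergence from the identity \eqref{eq.error1}, which we may rewrite in the form
\[
  \cG_t^{[\mu,z]} f \;=\; e^{tL} f \;-\; t^{(\mu+1)/2}\, \cE_t^{[\mu,z]} f.
\]
Since Theorem \ref{theorem.main1} with $k=0$ gives
$\|\cE_t^{[\mu,z]} f\|_{W^{r,p}_{a}} \le C\|f\|_{W^{r,p}_{a}}$ uniformly in $t\in(0,T]$, the second term is controlled by $C\,t^{(\mu+1)/2}\|f\|_{W^{r,p}_{a}}$ and therefore tends to $0$ in $W^{r,p}_{a}$ as $t\to 0_+$. Thus the corollary reduces to the strong continuity statement $e^{tL}f \to f$ in $W^{r,p}_{a}$ for every $f\in W^{r,p}_{a}$.

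The plan for this last step is a standard density argument. By Proposition \ref{prop.st2}\,(ii) (and its extension to all $r\in\RR$ via duality and interpolation, as already done in its proof) the operators $e^{tL}$ are uniformly bounded on $W^{r,p}_{a}$ for $t\in[0,T]$, with norm at most $Ce^{\omega T}$. Next, Lemma \ref{lemma.cont} gives
\[
  \|e^{tL}g - g\|_{W^{r,p}_{a}} \;\le\; C\,t\,\|g\|_{W^{r+2,p}_{a}}
  \qquad\text{for every } g\in W^{r+2,p}_{a},
\]
so strong continuity at $t=0$ holds on the dense subspace $W^{r+2,p}_{a}\subset W^{r,p}_{a}$.

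Given $f\in W^{r,p}_{a}$ and $\varepsilon>0$, I would pick $g\in W^{r+2,p}_{a}$ with $\|f-g\|_{W^{r,p}_{a}}<\varepsilon$ and estimate
\[
  \|e^{tL}f-f\|_{W^{r,p}_{a}}
  \;\le\; \|e^{tL}(f-g)\|_{W^{r,p}_{a}} + \|e^{tL}g-g\|_{W^{r,p}_{a}} + \|g-f\|_{W^{r,p}_{a}}
  \;\le\; (Ce^{\omega T}+1)\varepsilon + Ct\,\|g\|_{W^{r+2,p}_{a}},
\]
which is $\le (Ce^{\omega T}+2)\varepsilon$ for $t$ small enough. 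Combining with the first paragraph yields $\cG_t^{[\mu,z]}f\to f$ in $W^{r,p}_{a}$.

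There is no real obstacle here; the only mild point is ensuring that the uniform boundedness and the $W^{r+2,p}_{a}$-density statements are available for arbitrary $r\in\RR$ and arbitrary $a\in\RR$, but both are consequences of the material already developed in Section~\ref{sec.prelim} (the reduction $a\to 0$ via conjugation by $e^{a\langle x\rangle_w}$, then duality/interpolation for negative or fractional $r$).
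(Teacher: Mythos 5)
Your argument is correct and is essentially the one the paper leaves implicit (the corollary is stated as following ``immediately''): write $\cG_t^{[\mu,z]}f = e^{tL}f - t^{(\mu+1)/2}\,\cE_t^{[\mu,z]}f$ via \eqref{eq.error1}, bound the second term uniformly using the $k=0$ error estimate (for $r<0$ one should quote Theorem \ref{theorem.rough}, conjugated with the weight, rather than the statement of Theorem \ref{theorem.main1}, which is phrased for $m\ge 0$), and conclude from the strong continuity of $e^{tL}$ on $W^{r,p}_a$, which your density argument correctly derives from Proposition \ref{prop.st2} and Lemma \ref{lemma.cont}. Apart from that mild point about negative $r$, which you already flag, there is no gap.
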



\begin{thebibliography}{CMN}
%
\bibitem{Ait}{\sc Y. Ait-Sahalia}, {\em Closed-form likelihood expansions for multivariate diffusions},
The Annals of Statistics, (2008), Vol. 36, No. 2, 906--937.
%
\bibitem{ALN} {\sc B. Ammann, R. Lauter, \& V. Nistor}, {\em On the
 geometry of Riemannian manifolds with a Lie structure at infinity}.
 Int. J. Math. Math. Sci. {\bf 2004}, no. 1-4, 161--193.
%
\bibitem{AL}{\sc M. Avellaneda \& P. Laurence} {\em Quantitative Modeling of Derivative Securities:
{From} Theory To Practice}, CRC Press, 1999.
%
\bibitem{Az}{\sc R. Azencott}, {\em Asymptotic small time expansions for densities of diffusion processes},
Lecture Notes Maths, {\bf1059},     pp 402-498, Springer-Verlag, 1984.
%
\bibitem{B}{\sc H. Baker}, Proc Lond Math Soc (1) 34 (1902) 347-360;
ibid (1) 35 (1903) 333-374; ibid (Ser 2) 3 (1905) 24-47.
%
\bibitem{BPV}{\sc E. Barucci, S. Polidoro, \& V. Vespri},  {\em Some results on partial differential equations and Asian options},
 Math. Models Methods Appl. Sci.  11  (2001),  no. 3, 475--497.
%
\bibitem{BenA}{\sc G. Ben Arous}, {\em Flots et series de Taylor stochastiques}, Probab. Theory Relat.
Fields, {\bf 81}, 29--77 (1989).
%
\bibitem{BL}{\sc J. Bergh, J. L\"ofstr\"om}, {\em Interpolation
spaces. An introduction}. Grundlehren der Mathematischen
Wissenschaften, No. 223. Springer-Verlag, Berlin-New York, 1976.
%
\bibitem{BS}{\sc F. Black \& M. Scholes}, {\em The pricing of options and corporate
liabilities}, The Journal of Political Economy, Volume 81, Issue 3, (May - June 1973), 637-654.
%
\bibitem{CFP}{\sc F.Corielli, P. Foschi, A. Pascucci}, {\em Parametrix approximation of diffusion transition
densities}, Preprint, 2009.
%
\bibitem{Hundertmark} {\sc K. Broderix, D. Hundertmark, H. Leschke},
\emph{Continuity properties of Schr\"odinger semigroups with magnetic fields. (English summary)}
Rev. Math. Phys. \textbf{12} (2000), no. 2, 181--225. 
%
\bibitem{C}{\sc J. Campbell}, Proc Lond Math Soc 28 (1897) 381--390; ibid 29 (1898) 14--32.
%
\bibitem{Carmona} {\sc R. Carmona},
\emph{Regularity properties of Schr\"{o}dinger and Dirichlet semigroups},
J. Funct. Anal. \textbf{33} (1979), no. 3, 259--296. 
%
\bibitem{CN}{\sc R. Carmona \& S. Nadtochiy}, {\em An infinite dimensional stochastic analysis approach
to local volatility dynamic models}, Communications on Stochastic Analysis, 2(1), 2008.
%
\bibitem{Carmichael} {\sc H. J. Carmichael},  {\em Statistical methods in quantum optics. 1.
Master equations and Fokker-Planck equations}. Texts and Monographs in Physics. Springer-Verlag, Berlin, 1999.
%
\bibitem{Cast}{\sc F. Castell}, {\em Asymptotic expansion of stochastic flows}, Probability Theory and
Related Fields, {\bf 96}, No. 2, (1993), pp 225-239.
%
\bibitem{CGT}{\sc J. Cheeger, M. Gromov, M. E.  Taylor}, {\em Finite
propagation speed, kernel estimates for functions of the Laplace
operator, and the geometry of complete Riemannian
manifolds}. J. Differential Geom. {\bf 17} (1982), no. 1, 15--53.
%
\bibitem{CCCMN}{\sc W. Cheng, R. Constantinescu, N. Costanzino,
A. Mazzucato, V. Nistor}, {\em Approximate Solutions to Second Order
Parabolic Equations III: manifolds with bounded geometry}, in preparation.
%
\bibitem{CCLMN}{\sc W. Cheng, N. Costanzino,
J. Liechty, A. Mazzucato, V. Nistor}, {\em Closed form asymptotics for local volatility models}, Preprint.
%
\bibitem{CCCMN2}{\sc W. Cheng, N. Costanzino,
R. Constantinescu, A. Mazzucato, V. Nistor}, {\em Closed form asymptotics for stochastic volatility models}, Preprint.
%
\bibitem{CMN} {\sc W. Cheng, A. Mazzucato, V. Nistor}, {\em Approximate Solutions to Second Order
Parabolic Equations II: time-dependent operators}, in final prepration.
%
\bibitem{DiBenedetto}{\sc E. DiBenedetto}, {\em Partial Differential
Equations}, Birkh\"auser, Boston, MA, 1995.
%
\bibitem{Evans}{\sc L.C. Evans}, {\em Partial Differential Equations},
Grad. Stud. Math., vol. 19, Amer. Math. Soc., Providence, RI, 1998.
%
\bibitem{Farkas} {\sc W. Farkas,  N. Reich, C. Schwab},  \emph{Anisotropic stable L\'{e}vy copula 
processes---analytical and numerical  aspects}, 
 Math. Models Methods Appl. Sci.  \textbf{17}  (2007),  no. 9, 1405--1443.
 %
\bibitem{FPS}{\sc J.P. Fouque, G. Papanicolaou, K.R. Sircar}, {\em Derivatives in Financial Markets with
Stochastic Volatility}, Cambridge University Press, 2000.
%
\bibitem{Gardiner} {\sc C.W. Gardiner}, {\em Handbook of Stochastic Methods: for Physics, Chemistry and
the Natural Sciences}. Third Ed. Series in Synergetics 13. Springer-Verlag,  Berlin,  2004.
%
\bibitem{Gatheral}{\sc J. Gatheral}, {\em The Volatility Surface: A Practitioner's Guide}, John Wiley
\& Sons, 2006.
%
\bibitem{JaffeBook} {\sc J. Glimm and A. Jaffe},
{\em Quantum physics. A functional integral point of view.} Second edition. Springer-Verlag,
 New York, 1987.
%
\bibitem{Greiner}{\sc P. Greiner}, {\em An asymptotic expansion for
the heat equation}. Arch. Rational Mech. Anal. {\bf 41} (1971),
163--218.
%
\bibitem{H}{\sc F. Hausdorff}, Ber Verh Saechs Akad Wiss Leipzig, {\bf 58}
(1906) 19-48.
%
\bibitem{Heston}{\sc S.L. Heston},  {\em A closed-form solution for options with
stochastic volatility with applications to bond and currency options}, The
Review of Financial Studies, Vol 6, No. 2, (1993), 327-343.
%
\bibitem{JoachainBook} {\sc C. J. Joachain}, Quantum collision theory,
North-Holland Publishing (Elsevier), 1975.
\bibitem{Kampen} {\sc J. Kampen},  {\em On the WKB-expansion of parabolic equations and Applications},
SSRN (2006).
%
\bibitem{KatoBook} {\sc T. Kato},
Perturbation theory for linear operators. Reprint of the 1980 edition.
Classics in Mathematics. Springer-Verlag, Berlin, 1995.
%
\bibitem{Koch1}{\sc H. Koch}, {\em Partial differential equations with
non-Euclidean geometries}.  Discrete Contin. Dyn. Syst. Ser. S {\bf 1}
(2008), no. 3, 481--504.
%
\bibitem{KT}{\sc H. Koch, D. Tataru}, {\em Well-posedness for
the Navier-Stokes equations}.  Adv. Math. {\bf 157} (2001), no. 1,
22--35.
%
\bibitem{Krainer}
{\sc T. Krainer}, \emph{Maximal {$L\sp p$}-{$L\sp q$} regularity for parabolic partial
differential equations on manifolds with cylindrical ends}, 
Integral Equations Operator Theory, \textbf{63} (2009), 521--531.
%
\bibitem{Lunardi}{\sc A. Lunardi}, {\em Analytic semigroups and
optimal regularity in parabolic problems}. Progress in Nonlinear
Differential Equations and their Applications, 16. Birkh\" auser
Verlag, Basel, 1995.
%
\bibitem{Lesniewski}{\sc P. Hagan, D. Kumar, A. S. Lesniewski, \& D. E. Woodward},
{\em Managing smile risk}, Willmott Magazine, (2002) September, 84-108.
%
\bibitem{Lew}{\sc A.L. Lewis}, {\em Option valuation under stochastic volatility with Mathematica code},
Newport Beach, California: Finance Press, (2000).
%
\bibitem{MN} {\sc A.L. Mazzucato \& V. Nistor}, {\em Mapping
properties of heat kernels, maximal regularity, and semi-linear
parabolic equations on noncompact manifolds}. Journal of Hyperbolic
Differential Equations {\bf 3} (2006), n. 4, 599-629.
%
\bibitem{McKeanSinger}{\sc H. P. McKean, \& I. M. Singer}, {\em Curvature
and the eigenvalues of the Laplacian}. J. Differential Geometry {\bf
1} (1967), no. 1, 43--69.
%
\bibitem{Melrose2}{\sc R. Melrose},{\em The Atiyah-Patodi-Singer index
theorem}. Research Notes in Mathematics {\bf 4}. A K Peters, Ltd.,
Wellesley, MA, 1993.
%
\bibitem{BochnerBook}{\sc J. Mikusi\' nski}, {\em The Bochner integral},
LehrbŸcher und Monographien aus dem Gebiete der exakten Wissenschaften, Mathematische Reihe,
Band 55. Birkh\" auser
Verlag, Basel-Stuttgart, 1978.
 %
\bibitem{Pazy}{\sc A. Pazy}, {\em Semigroups of linear operators and
applications to partial differential equations}. Applied Mathematical
Sciences, 44. Springer-Verlag, New York, 1983.
%
\bibitem{Pleijel} {\sc S. Minakshisundaram \& A. Pleijel},
{\em  Some properties of the eigenfunctions of the Laplace-operator on
 Riemannian manifolds.}  Canadian J. Math.  {\bf 1},  (1949). 242--256.
%
\bibitem{Shubin}{\sc M.A. Shubin}, {\em Spectral theory of elliptic
operators on noncompact manifolds.  Methodes semi-classiques}, Vol. 1
(Nantes, 1991).  Asterisque {\bf 207} (1992), no. 5, 35--108.
%
\bibitem{Simon} {\sc B. Simon},
\emph{Schr\"{o}dinger semigroups}, 
Bull. Amer. Math. Soc. (N.S.) \textbf{7} (1982), no. 3, 447--526. 
\bibitem{Hsu}{\sc E. P. Hsu}, {\em Stochastic Analysis on Manifolds},
Graduate Studies in Mathematics, Vol 38, (2002).
%
\bibitem{TayPDEII}{\sc M.E. Taylor}, {\em Partial differential
equations. II. Qualitative studies of linear equations}. Applied
Mathematical Sciences, 116. Springer-Verlag, New York, 1996.
\bibitem{Tay}{\sc M.E. Taylor}, {\em Pseudodifferential operators},
Princeton Mathematical Series, 34. Princeton University Press,
Princeton, N.J., 1981.
%
\bibitem{T}{\sc M.E. Taylor}, {\em Pseudodifferential operators and
Nonlinear PDE}, Birkh\"{a}user, Boston 1991.
%
\bibitem{Trecent} {\sc M.E. Taylor}, {\em Hardy spaces and BMO on
manifolds with bounded geometry}. J. Geom. Anal. {\bf 19} (2009),
no. 1, 137--190.
%
\bibitem{Triebel}{\sc H. Triebel}, {\em Theory of function
spaces. II}. Monographs in Mathematics, 84. Birkhauser Verlag,
Basel, 1992.
%
\bibitem{V1}{\sc S.R.S. Varadhan}, {\em Diffusion processes in a small time interval},
Comm.  Pure Appl. Math. 20 (1967), 659--685.
%
\bibitem{V2}{\sc S.R.S. Varadhan}, {\em On the behavior of the fundamental solution of the
heat equation with variable coefficients},
Comm. Pure Appl. Math. 20 (1967), 431--455.
%
\bibitem{Vas}{\sc D.V. Vassilevich}, {\em Heat kernel expansion: User's manual}, Physics
Reports, 388:279--360, (2003).
%
\bibitem{Wilcox}{\sc R.M. Wilcox}, {\em Exponential operators and
parameter differentiation in Quantum Physics}, J. Math. Phys., {\bf 8}
)1967), 962-982.
%
\bibitem{Yosida}{\sc K. Yosida}, {\em Functional analysis}. Reprint of
the sixth (1980) edition. Classics in Mathematics. Springer-Verlag,
Berlin, 1995.
\end{thebibliography}
\end{document}